\documentclass[11pt]{article}
\usepackage{dsfont,amsmath,amsfonts,amssymb,amsthm,mathrsfs}
\usepackage{fullpage}
\usepackage{graphicx,graphics}
\usepackage{epsfig}
\usepackage{latexsym}
\usepackage[applemac,utf8]{inputenc}
\usepackage{ae,aecompl}
\usepackage[english]{babel}
\usepackage[colorlinks=true]{hyperref}
\usepackage[toc,page]{appendix}
\usepackage{bbm}
\usepackage[T1]{fontenc}
\usepackage[utf8]{inputenc}

\date{}
\pagestyle{plain}


\newcommand{\E}{\mathbb{E}}
\newcommand{\e}{{\mathrm e}}
\newcommand{\R}{{\mathbb{R}}}
\newcommand{\T}{\mathcal{T}}

\newcommand{\N}{\mathbb{N}}
\newcommand{\D}{\mathcal{D}}
\newcommand{\Z}{\mathbb{Z}}

\newcommand{\veps}{\varepsilon}

\newcommand{\pr}{\mathbb{P}}
\newcommand\numberthis{\addtocounter{equation}{1}\tag{\theequation}}
\renewcommand{\geq}{\geqslant}
\renewcommand{\leq}{\leqslant}
\renewcommand{\le}{\leqslant}
\newcommand{\G}{\vec{\mathcal{G}}}
\def\llbracket{[\hspace{-.10em} [ }
\def\rrbracket{ ] \hspace{-.10em}]}

\newcommand{\cv}{\ensuremath{\overset{\mathrm{(d)}}\longrightarrow}}

\newcommand{\exc}{\mathbf{e}}

\renewcommand{\theequation}{\arabic{equation}}

\newtheorem{thm}{Theorem}[section]

\newtheorem{lem}[thm]{Lemma}
\newtheorem{prop}[thm]{Proposition}

\newtheorem{rem}[thm]{Remark}

\setcounter{tocdepth}{2}

\date{}

\title{\bf The scaling limit of a critical random directed graph}

\author{Christina Goldschmidt\thanks{Department of Statistics and Lady Margaret Hall, University of Oxford, goldschm@stats.ox.ac.uk}\quad \& \hspace{0.2cm}Robin Stephenson\thanks{School of Mathematics and Statistics, University of Sheffield, robin.stephenson@normalesup.org }}

\begin{document}
\maketitle

\begin{abstract}
We consider the random directed graph $\vec{G}(n,p)$ with vertex set $\{1,2,\ldots,n\}$ in which each of the $n(n-1)$ possible directed edges is present independently with probability $p$. We are interested in the strongly connected components of this directed graph.  A phase transition for the emergence of a giant strongly connected component is known to occur at $p = 1/n$, with critical window $p= 1/n + \lambda n^{-4/3}$ for $\lambda \in \R$. We show that, within this critical window, the strongly connected components of $\vec{G}(n,p)$, ranked in decreasing order of size and rescaled by $n^{-1/3}$, converge in distribution to a sequence $(\mathcal{C}_1,\mathcal{C}_2,\ldots)$ of finite strongly connected directed multigraphs with edge lengths which are either 3-regular or loops.  The convergence occurs in the sense of an $\ell^1$ sequence metric for which two directed multigraphs are close if there are compatible isomorphisms between their vertex and edge sets which roughly preserve the edge lengths.  Our proofs rely on a depth-first exploration of the graph which enables us to relate the strongly connected components to a particular spanning forest of the undirected Erd\H{o}s--R\'enyi random graph $G(n,p)$, whose scaling limit is well understood.    We show that the limiting sequence $(\mathcal{C}_1,\mathcal{C}_2,\ldots)$ contains only finitely many components which are not loops. If we ignore the edge lengths, any fixed finite sequence of 3-regular strongly connected directed multigraphs occurs with positive probability.
\end{abstract}

\section{Introduction and main result}
Many real-world networks are inherently directed in nature.  Consider, for example, the World Wide Web: hyperlinks point from one webpage to another but the link in the other direction is not necessarily present.  However, this structurally important feature is often ignored in modelling, and the corresponding mathematical literature is much less well-developed.  In this paper, we consider the simplest possible model of a random directed graph and endeavour to understand the way in which its directed connectivity properties change as we adjust its parameters.

Let $\vec{G}(n,p)$ be a random directed graph with vertex set $[n]:= \{1,\ldots,n\}$ and random edge set where each of the $n(n-1)$ possible edges $(i,j)$, $i \neq j$, is present independently with probability $p$. We are interested in the \emph{strongly connected components} of $\vec{G}(n,p),$ that is the maximal subgraphs for which there exists a directed path from a vertex to any other. 

The usual Erd\H{o}s--R\'enyi random graph, $G(n,p)$, in which each of the $n(n-1)/2$ possible \emph{undirected} edges is present independently with probability $p$, will play an important role in our results.  It is well known that $G(n,p)$ undergoes a phase transition \cite{ErdosRenyi}: if $np \to c > 1$ as $n \to \infty$ then $G(n,p)$ has a unique giant component with high probability, while if $np \to c < 1$ as $n \to \infty$ then the components of $G(n,p)$ are of size $O_{\mathbb{P}}(\log n)$.  In the so-called \emph{critical window}, where $p = \frac{1}{n} + \lambda n^{-4/3}$, Aldous~\cite{aldous1997} proved that the sequence of sizes of the largest components possesses a distributional limit when renormalised by $n^{2/3}$.

Previous work by Karp~\cite{karp1990} and \L uczak~\cite{L1990} has shown that $\vec{G}(n,p)$ undergoes a similar phase transition to that of $G(n,p)$: if $np \to c>1$ as $n \to \infty$, then $\vec{G}(n,p)$ has a unique giant strongly connected component with high probability, while if $np\to c<1$ as $n \to \infty$, then the sizes of all the strongly connected components are $o_{\mathbb{P}}(n)$.  (In $\vec{G}(n,p)$, in contrast to the situation for $G(n,p)$, \L uczak~\cite{L1990} shows that the sizes of the subcritical components are tight.)  These results were strengthened by \L uczak and Seierstad~\cite{LS}, who showed that $\vec{G}(n,p)$ has, in fact, the same critical window as $G(n,p)$.

\begin{thm}[\L uczak and Seierstad~\cite{LS}]
Let $\gamma_n = (np-1)n^{1/3}$ and assume $\gamma_n=o(n^{1/3})$ as $n\to\infty.$
\begin{itemize}
\item[(i)] If $\gamma_n \to \infty$ then the largest strongly connected component of $\vec{G}(n,p)$ has size $(4+o_{\mathbb{P}}(1)) \gamma_n^2 n^{1/3}$ and the second largest has size $O_{\mathbb{P}}(\gamma_n^{-1} n^{1/3})$.
\item[(ii)] If $\gamma_n \to -\infty$ then the largest strongly connected component of $\vec{G}(n,p)$ has size $O_{\mathbb{P}}(|\gamma_n^{-1}| n^{1/3})$.
\end{itemize}
\end{thm}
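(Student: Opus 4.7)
For each vertex $v \in [n]$, let $\vec{C}^+(v) = \{w : v \to w\}$ and $\vec{C}^-(v) = \{w : w \to v\}$ denote the forward and backward reachable sets in $\vec{G}(n,p)$; the strongly connected component of $v$ equals $\vec{C}^+(v) \cap \vec{C}^-(v)$. The key structural observation is that a breadth-first exploration constructing $\vec{C}^+(v)$ only queries out-edges $(u, \cdot)$ of previously visited vertices, whereas one constructing $\vec{C}^-(v)$ only queries in-edges $(\cdot, u)$; since these two families of directed edges are mutually independent in $\vec{G}(n,p)$, the sizes $|\vec{C}^+(v)|$ and $|\vec{C}^-(v)|$ decouple asymptotically. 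Each is moreover well approximated, via the Binomial--Poisson coupling and the usual depletion-of-vertices argument, by the total progeny $T$ of a Poisson$(np)$ Galton--Watson process with $np = 1 + \gamma_n n^{-1/3}$.

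\textbf{Case (i).} Suppose $\gamma_n \to +\infty$ and write $\veps_n = \gamma_n n^{-1/3}$. Standard near-critical Galton--Watson estimates give $\pr(T \geq \veps_n n / M) = (2 + o(1)) \veps_n$ and, conditionally on this event, $T = (2 + o_\pr(1)) \veps_n n$ for $M$ sufficiently large. Set $L^\pm = \{v : |\vec{C}^\pm(v)| \geq \veps_n n / M\}$: first- and second-moment calculations yield $|L^\pm| = (2 + o_\pr(1)) \gamma_n n^{2/3}$, and the independence of out- and in-edges at each vertex promotes this to $|L^+ \cap L^-| = (4 + o_\pr(1)) \gamma_n^2 n^{1/3}$. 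A two-point lemma then shows that for any $u, v \in L^+ \cap L^-$ the expected number of intermediate $w$ with $u \to w \to v$ is of order $n \veps_n^2 = \gamma_n^2 n^{1/3} \to \infty$, so by a Chebyshev argument $u$ and $v$ are mutually reachable with high probability; hence $L^+ \cap L^-$ is a single SCC of the asserted size. The matching bound on the second-largest SCC follows from the exponential tail $\pr(k \leq T \leq \veps_n n / M) \lesssim k^{-1/2} e^{-c k \veps_n^2}$ of the dual subcritical process, applied through a first-moment count of non-giant SCCs.

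\textbf{Case (ii).} Suppose $\gamma_n \to -\infty$. Every vertex in a non-trivial SCC lies on a directed cycle, so the total size of all non-trivial SCCs is at most the number of such vertices. A direct count gives
\[
\E\!\left[\#\{\text{directed cycles through a fixed } v\}\right] \leq \sum_{\ell \geq 2} (n-1)^{\ell-1} p^\ell \leq \frac{np^2}{1-np} = O\!\left(\frac{1}{|\gamma_n| n^{2/3}}\right),
\]
so the expected number of vertices lying in non-trivial SCCs is $O(|\gamma_n|^{-1} n^{1/3})$, and Markov's inequality bounds the largest SCC by $O_\pr(|\gamma_n|^{-1} n^{1/3})$. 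The hardest part of the argument is the unique-giant statement of Case (i): making the asymptotic independence of $\vec{C}^+(v)$ and $\vec{C}^-(v)$ quantitative and uniform in $v$, and running the sprinkling-type argument to realize $L^+ \cap L^-$ as a single SCC rather than a disjoint union of smaller ones. The matching bound on the second-largest is equally delicate, requiring sharp tail estimates for the joint event that both reach sets of a vertex are large yet at least one fails to be giant.
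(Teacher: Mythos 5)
This theorem is not proved in the paper at all: it is quoted from \L uczak and Seierstad \cite{LS}, so there is no internal proof to compare against and your argument has to stand on its own. Your Case (ii) essentially does: every vertex of a non-trivial strongly connected component lies on a directed cycle, the expected number of cycles through a fixed vertex is at most $np^2/(1-np)=O(|\gamma_n|^{-1}n^{-2/3})$, hence the expected number of vertices on cycles is $O(|\gamma_n|^{-1}n^{1/3})$, and Markov gives the claim. (Minor caveat: single vertices are SCCs of size $1$, so the bound must be read for non-trivial components, i.e.\ in the regime $|\gamma_n|=o(n^{1/3})$ where the right-hand side does not vanish.)

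Case (i), however, is an outline with genuine gaps at exactly the points where the difficulty of the barely supercritical window $\veps_n=\gamma_n n^{-1/3}\to0$ sits. First, the asserted ``mutual independence'' of the out-edge and in-edge families is not correct as stated: the same directed edge $(a,b)$ can be queried both by the forward exploration (as an out-edge of $a$) and by the backward exploration (as an in-edge of $b$), so the two explorations only decouple after one shows the overlap of the explored vertex sets is negligible, uniformly in $v$ --- this is precisely the quantitative step you defer. Second, the statements $|L^\pm|=(2+o_{\mathbb{P}}(1))\gamma_n n^{2/3}$ and $|L^+\cap L^-|=(4+o_{\mathbb{P}}(1))\gamma_n^2 n^{1/3}$ need second-moment estimates in which the correlations between reach-set events at distinct vertices are shown to vanish at the right rate; these are asserted, not performed, and they are delicate when $\veps_n\to0$. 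Third, the ``two-point lemma plus Chebyshev'' gives mutual reachability of one fixed pair $u,v$ with probability $1-O(1/(n\veps_n^2))$, which is nowhere near strong enough to union-bound over the $\asymp\gamma_n^4 n^{2/3}$ pairs of $L^+\cap L^-$ and conclude it forms a single SCC; one needs exponential bounds (or a sprinkling argument) for the event that two sets of size $\asymp\veps_n n$ fail to be joined by a directed path. Fourth, the second-largest bound $O_{\mathbb{P}}(\gamma_n^{-1}n^{1/3})$ is reduced to a tail bound for Galton--Watson total progeny, but tree (reach-set) sizes do not directly control SCC sizes; the argument needs to control, e.g., the longest directed cycle avoiding the giant. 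You yourself flag the uniqueness of the giant and the second-largest bound as the hardest steps, and as written they are not carried out, so the proposal is a reasonable sketch of the Karp/\L uczak-style strategy for (i) but not a proof of it.
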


However, in contrast to $G(n,p)$, \L uczak and Seierstad also show that within the critical window, the complex strongly connected components (that is, those which do not just consist of a single directed cycle) occupy only $O_{\mathbb{P}}(n^{1/3})$ vertices in total.  This shows that the critical components are very much ``thinner'' objects than in the setting of $G(n,p)$, where the complex components occupy $O_{\mathbb{P}}(n^{2/3})$ vertices.  

In a recent preprint \cite{Coulson}, Coulson shows that, on rescaling by $n^{-1/3}$, the size of the largest strongly connected component of $\vec{G}(n,p)$ in the critical window is tight, with explicit upper and lower tail bounds.

\begin{figure}[h]
\centering
\includegraphics[scale=0.7]{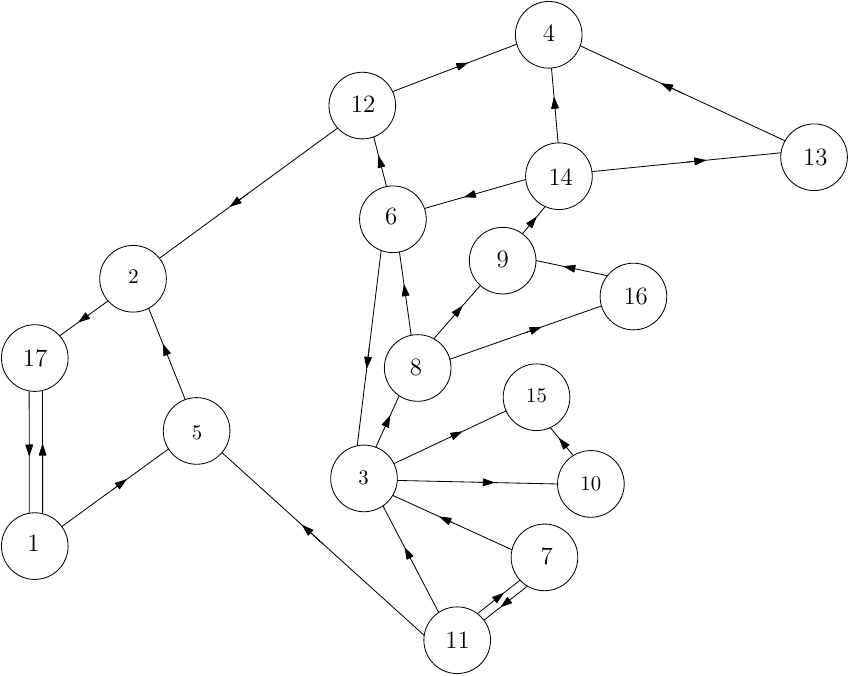}
\caption{A directed graph on $[17].$ Its strongly connected components have vertex sets $\{3,6,8,9,14,16\},\{1,2,5,17\},\{7,11\},\{4\},\{10\},\{12\},\{13\}$ and $\{15\}.$}
\label{fig1}
\end{figure}  

In this paper, we investigate the behaviour within the critical window in more detail, and in particular we prove a scaling limit for the strongly connected components.  We do this by relating a particular subgraph of $\vec{G}(n,p)$ to a spanning forest of $G(n,p)$, and the convergence of that spanning forest (thought of as a collection of discrete metric spaces, one per component) to a collection of random $\R$-trees. Similar tools have already been used to study the components of $G(n,p)$ in the same critical window, leading to the main theorem of \cite{A-BBG12}. 

\begin{thm}[Addario-Berry, Broutin and Goldschmidt~\cite{A-BBG12}] \label{thm:Gnp}
Let $p = p(n) = \frac{1}{n} + \lambda n^{-4/3}$ for fixed $\lambda \in \R$.  Let $(A_1(n),A_2(n),\ldots)$ be the connected components of $G(n,p)$, each considered as a metric space by endowing the vertex-set with the graph distance.  Then
\[
\left(\frac{A_i(n)}{n^{1/3}},i\in\N \right) \cv  (\mathcal{A}_i,i\in\N),
\]
where $\mathcal{A}=(\mathcal{A}_i,i\in\N)$ is a random sequence of compact metric spaces, and the convergence is in distribution for the $\ell^4$ metric for sequences of compact metric spaces based on the Gromov--Hausdorff distance. 
\end{thm}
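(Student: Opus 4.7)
The plan is to build on Aldous's analysis \cite{aldous1997} of the breadth-first exploration of $G(n,p)$ in the critical window and then upgrade the convergence of component sizes to a convergence of the components as metric spaces. Aldous showed that the reflected breadth-first walk of $G(n,p)$ at $p = 1/n + \lambda n^{-4/3}$, rescaled by $n^{-2/3}$ in time and $n^{-1/3}$ in space, converges to $B^\lambda - \underline{B}^\lambda$, where $B^\lambda_t = B_t + \lambda t - t^2/2$ is Brownian motion with parabolic drift, and that the sequence of excursion lengths gives the $\ell^2$-scaling limit of the component sizes. Moreover, the surpluses of the components (the numbers of edges in excess of a spanning tree) converge jointly to an independent Poisson count of marks falling under the corresponding excursions. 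This yields the joint limit law of sizes and surpluses.

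The next step is to describe each component as a metric space. Conditionally on having size $n'$ and surplus $s$, a component is uniformly distributed over connected graphs with $n'$ vertices and $n' - 1 + s$ edges. Exploring such a graph by depth-first search produces a spanning tree with a specific (non-uniform but explicit) law, encoded by a contour-type walk whose rescaling converges to a Brownian excursion tilted by a factor depending on the excursion length and on the number of extra edges. Through Aldous's CRT construction, this gives Gromov--Hausdorff convergence of the rescaled spanning tree to the real tree coded by the tilted excursion.

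To handle the surplus edges themselves, note that conditionally on the spanning tree they correspond to approximately $s$ uniformly sampled pairs of vertices; in the scaling limit these become $s$ pairs of points chosen independently according to the mass measure on the real tree. Joint convergence of the encoding excursion together with the locations of these identifications, followed by continuity of the gluing operation for the Gromov--Hausdorff distance, yields convergence of each component as a metric space to the limit $\mathcal{A}_i$. Running the depth-first exploration across successive components simultaneously gives joint convergence for any fixed finite number of components.

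Extending to joint convergence in $\ell^4$ requires summability estimates on the tail: using Aldous's $\ell^2$-tightness of the rescaled sizes together with a bound on the diameter of a uniform random connected graph of given size, one controls $\sum_i (\mathrm{diam}(A_i(n))/n^{1/3})^4$ uniformly in $n$, so that components of small size contribute negligibly. The main obstacle will be identifying the exact tilt in the law of the spanning tree produced by the depth-first exploration and proving that the encoding walk converges to the corresponding tilted Brownian excursion; this requires a careful absolute continuity argument on path space, together with sharp control on the Radon--Nikodym derivatives arising from conditioning on the size and surplus of the component.
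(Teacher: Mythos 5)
This theorem is not proved in the paper at all: it is quoted verbatim from Addario-Berry, Broutin and Goldschmidt \cite{A-BBG12}, so your proposal has to be measured against that proof, whose ingredients (tilted trees, permitted edges, height-moment bounds) this paper reuses in Sections 3--5. Your overall skeleton is the right one: Aldous's walk for the joint law of sizes and surpluses, a biased spanning tree whose height process converges to a tilted excursion, gluing for the surplus edges, and an $\ell^4$ tail estimate via fourth-moment bounds on diameters combined with $\ell^2$ control of the sizes. The tilt you leave as your ``main obstacle'' is in fact explicit in \cite{A-BBG12}: conditional on its size $m$, a component's depth-first tree is a uniform labelled tree biased by $(1-p)^{-a(T_m)}$, where $a(T_m)$ is the number of permitted edges (the area under the depth-first walk), and this converges to the excursion tilted by $\exp\bigl(\int_0^\sigma \exc^{(\sigma)}(u)\,\mathrm{d}u\bigr)$ --- a functional of the whole excursion, not merely of its length and surplus.

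The genuine gap is your treatment of the surplus edges. Conditionally on the depth-first spanning tree, the surplus edges are \emph{not} approximately uniform pairs of vertices: they are uniform among the \emph{permitted} pairs, i.e.\ pairs simultaneously on the stack, of which there are of order $m^{3/2}$ rather than $\binom{m}{2}$, and whose tails are size-biased by the stack height. Consequently, in the limit each identification glues a point sampled from a Poisson process with intensity proportional to the tilted excursion to a \emph{uniform point on its path to the root} (uniform for length measure), which is exactly how $\mathcal{A}_i$ is defined in the statement you are proving. Gluing $s$ pairs of points sampled independently from the mass measure, as you propose, is a different operation: the mass measure of the limiting $\R$-tree is carried by the leaves, so you would almost surely identify pairs of leaves, the resulting cycle would pass through their most recent common ancestor and have the law of a distance between two mass-measure points, and the glued space would not be distributed as $\mathcal{A}_i$ (nor does your discrete claim hold, so you could not even prove convergence to your proposed object). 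Repairing this requires replacing the ``uniform pairs'' step by the permitted-edge description and proving joint convergence of the tilted walk together with the Poisson-distributed marks and the uniform choices on ancestral paths, which is precisely the technical core of \cite{A-BBG12}.
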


Let us immediately give a description of the scaling limit $\mathcal{A}$, since it plays an important role in the sequel. Define $W^{\lambda}(t)=W(t)+\lambda t -t^2/2$ for $t \ge 0$, where $W$ is a standard Brownian motion, and let $(\sigma_i,i\in\N)$ be the collection of excursion lengths above the running infimum of $W^{\lambda}$, ranked in decreasing order. For $\sigma>0,$ let $\tilde{\exc}^{(\sigma)}$ be a Brownian excursion with length $\sigma$ biased by the exponential of its area, so that if $\exc^{(\sigma)}$ is a Brownian excursion of length $\sigma$ then, for any non-negative measurable test function $g$, we have
\[
\E \left[g \left(\tilde{\exc}^{(\sigma)} \right) \right] = \frac{ \E \left[\exp \left( \int_0^{\sigma} \exc^{(\sigma)}(u) \mathrm{d} u \right) g \left(\exc^{(\sigma)} \right) \right] }{\E \left[\exp \left( \int_0^{\sigma} \exc^{(\sigma)}(u) \mathrm{d} u \right) \right] }.
\]
Let $\T_{\sigma}$ be the $\R$-tree encoded by $2\tilde{\exc}^{(\sigma)}$ (see Section~\ref{subsec:rtrees} below for a description of how this is done).  We make some additional point-identifications in this tree. Let $(t_1,\ldots,t_K)$ be the points of a Poisson random measure on $[0,\sigma]$ with intensity $\tilde{\exc}^{(\sigma)}(t)\mathrm dt.$ The point $t_j \in [0,\sigma]$ corresponds to a point $x_j$ in $\T_{\sigma}$ at distance $2 \tilde{\exc}^{(\sigma)}(t_j)$ from the root.  For all $1 \le j \le K$, we identify $x_j$ with a uniformly chosen point on its path to the root.  Write $\mathcal{G}_{\sigma}$ for the resulting metric space.  Finally, conditionally on $(\sigma_i,i\in\N),$ the metric spaces $\mathcal{A}_1, \mathcal{A}_2, \ldots$ are independent and, for each $i\in\N,$ $\mathcal{A}_i$ has the law of $\mathcal{G}_{\sigma_i}$.

\bigskip

While metric spaces provide the natural setting in which to consider scaling limits of undirected graphs, this is no longer the case in the directed setting: we need some extra structure to encode the orientations.  Let us make some useful definitions.

By a \emph{directed multigraph}, we mean a triple $(V,E,r)$ where
\begin{itemize}
\item  $V$ and $E$ are finite sets.
\item  $r=(r_1,r_2)$ is a function from $E$ to $V\times V$, with $r_1(e)$ and $r_2(e)$ for $e\in E$ being respectively the \emph{tail} and \emph{head} of the directed edge $e$.
\end{itemize}
We will refer to the case where $V = \{v\}$, $E = \{e\}$ and $r_1(e) = r_2(e) = v$ as a \emph{loop}.  $X=(V,E,r,\ell)$ is a \emph{metric directed multigraph} (henceforth MDM) if $(V,E,r)$ is a directed multigraph and $\ell$ is a function from $E$ to $[0,\infty)$ which assigns each edge a length.  A special role will be played by the degenerate case of a loop whose single edge is assigned length 0, which we denote by $\mathfrak{L}$. The \emph{length} $\mathrm{len}(X)$ of $X$ is given by $\sum_{e \in E} \ell(e)$.

We now define a distance between MDMs $X=(V,E,r,\ell)$ and $X'=(V',E',r',\ell')$ in such a way that they are close if there is a graph isomorphism from $X$ to $X'$ which changes the lengths very little. Specifically, let $\mathrm{Isom}(X,X')$ be the set of graph isomorphisms from $X$ to $X',$ that is pairs of bijections $f$ from $V$ to $V'$ and $g$ from $E$ to $E'$ such that, for all $e\in E$, $r'(g(e))=(f(r_1(e)),f(r_2(e))).$ Then set
\[
d_{\G}(X,X')=\inf_{(f,g)\in\mathrm{Isom}(X,X')} \ \sup_{e\in E} \ |\ell(e)-\ell'(g(e))|.
\]
Note that if $X$ and $X'$ do not have the same graph structure, then $\mathrm{Isom}(X,X')$ is empty and $d_{\G}(X,X')$ is set to infinity.  Let $\G$ be the set of (isometry classes of) MDMs.  Then $(\G, d_{\G})$ is a Polish space (as a countable union of powers of $[0,+\infty)$).

Let $C_i(n)$ for $i \ge 1$ be the strongly connected components of $\vec{G}(n,p)$, listed in decreasing order of size, breaking ties by increasing order of the lowest labelled vertex. We view these strongly connected components as MDMs, by assigning to each edge a length of $1$, and then removing all vertices with degree $2$ and merging their corresponding edges into paths of length greater than $1$. In the case of a strongly connected component which consists of a single directed cycle with $k\ge 2$ vertices, we think of it as a loop of length $k$.  Similarly, we think of isolated vertices as loops of length $0$. Finally, since there are at most $n$ components, we complete the list with an infinite repeat of $\mathfrak{L},$ the loop of length $0$.

We can now state our main theorem.
\begin{thm}\label{thm:main} Suppose $p = p(n) = \frac{1}{n} + \lambda n^{-4/3} + o(n^{-4/3})$.  There exists a sequence $\mathcal{C}=(\mathcal{C}_i,i\in\N)$ of random strongly connected MDMs such that, for each $i \ge 1$, $\mathcal{C}_i$ is either 3-regular or a loop, and such that
\begin{equation}\label{eq:mainconvergence}
\left(\frac{C_i(n)}{n^{1/3}},i\in\N\right) \cv  (\mathcal{C}_i,i\in\N)
\end{equation}
with respect to the distance $d$ defined by
\[
d(\mathbf{A},\mathbf{B})=\sum_{i=1}^{\infty} d_{\G}(A_i,B_i),
\]
for $\mathbf{A}, \mathbf{B} \in \G^{\N}$.
\end{thm}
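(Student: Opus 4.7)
My approach would closely follow the strategy outlined in the introduction: use a carefully designed depth-first exploration of $\vec{G}(n,p)$ to relate its strongly connected components to the connected components of an auxiliary spanning forest $F_n$, whose undirected structure is close in law to that of a spanning forest of $G(n,p)$, and then transport the scaling limit of Theorem~\ref{thm:Gnp} to the directed setting.

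Concretely, I would proceed in four main steps. \textbf{Step 1 (Exploration):} define a DFS of $\vec{G}(n,p)$ which, from the current vertex, uniformly chooses an out-neighbour among the undiscovered vertices and backtracks otherwise; this produces a DFS tree together with a classification of the remaining directed edges as back, forward, or cross edges. \textbf{Step 2 (Matching $G(n,p)$):} argue that the resulting spanning forest $F_n$ (or a mild modification of it) has the same distribution, up to a controllable perturbation, as a suitable spanning forest of the undirected graph $G(n,p)$ on $[n]$, so that the connected components of $F_n$, rescaled by $n^{-1/3}$, converge jointly to the sequence $\mathcal{A}=(\mathcal{A}_i,i\in\N)$ of Theorem~\ref{thm:Gnp}. \textbf{Step 3 (Identifying SCCs):} inside each tree component, non-trivial strongly connected components are produced by back edges closing directed cycles in the DFS tree, possibly merged via interactions with forward and cross edges; show that the number and positions of such ``closing'' edges behave, asymptotically, like a Poisson point process on the limiting $\R$-tree $\T_\sigma$ matching the identifications used to construct $\mathcal{G}_\sigma$, with each back edge producing an identification of a ``descendant'' with (essentially) a uniformly chosen ancestor on its branch. \textbf{Step 4 (Reading off the MDM):} for each identified SCC, collapse degree-$2$ vertices and orient the remaining edges according to the DFS, establishing that with high probability each non-trivial limit component is either a directed loop or $3$-regular. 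The $\ell^1$ convergence in $(\G^{\N}, d)$ then follows from a tail estimate, using that only finitely many components in the limit are non-loops and that $\sum_i \sigma_i$ is almost surely finite.

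The hard part, I expect, is Step 3. Unlike the undirected case, detecting a strongly connected component requires the existence of a directed cycle, so the combinatorial rules by which back, forward, and cross edges merge ``local'' cycles into larger SCCs must be tracked carefully through the scaling limit, and one has to check that the directions of the surviving edges in the kernel have the right joint law. Showing that each limit multigraph is $3$-regular further demands a general-position argument, ruling out with high probability configurations where two closing edges share an endpoint or collide on the tree and produce degenerate attachments. A further delicate point is upgrading the $\ell^4$-type control of $G(n,p)$ components in Theorem~\ref{thm:Gnp} into $\ell^1$ control in the directed setting; this should exploit the fact from \cite{LS} that the total number of vertices in complex SCCs is $O_{\mathbb{P}}(n^{1/3})$, so that after rescaling by $n^{-1/3}$ the non-loop contributions are absolutely summable and only finitely many terms in the sequence $(d_{\G}(C_i(n)/n^{1/3},\mathcal{C}_i))_{i\ge 1}$ are non-negligible with high probability.
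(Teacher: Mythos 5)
Your overall architecture (directed DFS, exact coupling with $G(n,p)$, Poissonian identifications on tilted excursion trees, then a tail estimate for $\ell^1$) matches the paper, but your Step 3 misdescribes the mechanism that actually produces the limit components, and this is not a cosmetic issue. First, a tree of the forest of size $\asymp n^{2/3}$ carries $\asymp n^{1/3}$ back edges, so "the closing edges converge to a Poisson process" cannot be asserted for the back edges as a whole; the paper's key reduction (Proposition~\ref{prop:starcomponents}) is that one may discard all back edges except the ancestral ones and those whose head lies in the subtree already spanned by the root and the previously retained back edges, and only this $O(1)$ subset is tracked in the limit. Second, the limiting identifications are \emph{not} "a descendant identified with a uniformly chosen ancestor on its branch", nor do they match the identifications defining $\mathcal{G}_\sigma$: the tails arrive at rate given by the length of the currently marked subtree (equation~(\ref{eq:rate})), the heads are uniform on that whole marked subtree (so non-ancestral identifications occur), and the ancestral intensity is $2\tilde{\exc}^{(\sigma)}$ rather than the $\tilde{\exc}^{(\sigma)}$ appearing for the undirected limit (height versus depth-first walk). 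If you only keep ancestral identifications to uniform ancestors you obtain nothing but directed cycles; the complex $3$-regular components exist precisely because of the non-ancestral identifications, so your construction would converge to the wrong limit object. You also leave open the step, needed for your Step 4, that surplus (forward non-tree) edges participate in no strongly connected component with high probability; this requires a genuine argument (in the paper, tightness of the number of descendants of the heads of surplus edges, via local limits of conditioned Galton--Watson trees, Proposition~\ref{prop:surplus}).

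The $\ell^1$ upgrade is also not covered by your proposed use of the \L uczak--Seierstad bound. That bound controls only the vertices in \emph{complex} components, while the tail sum $\sum_{i>k} d_{\G}(C_i(n)/n^{1/3},\mathfrak{L})$ is dominated by the lengths of the many cycle components; moreover, tightness of a total is not enough, since one needs the tail beyond the $k$-th largest component to be small \emph{uniformly in $n$} as $k\to\infty$. The paper gets this by matching large components to large exploration trees (Proposition~\ref{prop:aa}) and proving per-tree moment bounds such as $\E[N^a_\sigma\|\T_\sigma\|]\le C\sigma^2$ and the discrete analogue $\E[A_m\|T_m\|]\le Cm^2/n$, which are then summed over the small trees using $\sum_i\sigma_i^2,\sum_i\sigma_i^3<\infty$; some replacement for these quantitative estimates would be needed to make your last step work.
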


In particular, the limit object $\mathcal{C}$ has finite total length.  We will show later that $\mathcal{C}$ has only finitely many complex components (i.e.\ components which are not loops).  So Theorem~\ref{thm:main} implies the convergence in distribution of the number of complex components of $\vec{G}(n,p),$ their rescaled numbers of vertices, and their excesses (where the excess of a component is given by its number of edges minus its number of vertices). This, in particular, significantly strengthens Theorems 13 and 14 of \cite{LS}.  Finally, we also show that, if we ignore the edge lengths, then any fixed finite sequence of 3-regular strongly connected directed multigraphs occurs with positive probability.

\bigskip

We defer a proper description of $\mathcal{C}$, which is rather involved, to Section~\ref{sec:scalinglimit} below.  As is the case for $(\mathcal{A}_i,i\in\N),$ the $(\mathcal{C}_i,i\in\N)$ are derived from the $\R$-trees encoded by the excursions of $W^{\lambda}$.  However, the strongly connected components $(\mathcal{C}_i,i\in\N)$ are much simpler objects than $(\mathcal{A}_i, i \in \N)$ which, for example, have a rich fractal structure coming from their relationship to the Brownian continuum random tree.  A closer analogy is obtained by instead looking at the scaling limit of the subgraph of $G(n,p)$ consisting only of edges and vertices which lie in cycles.  Each component of the graph has a \emph{core}, which is defined to be the maximal subgraph of minimum degree 2, and consists of the vertices and edges which lie in cycles, as well as those in paths joining cycles.  (The core can be obtained by successively deleting leaves and their incident edges from the graph until no leaves remain.)  The core is empty if there are no cycles. If the core is non-empty, removing those of its edges which do not lie in a cycle yields one or more components, which represent the cycle structure of the original component.

It is possible to define an analogous notion of a core for each component $\mathcal{A}_i, i \in \N$ of the scaling limit of the critical random undirected graph, created by the point-identifications we make in the $\R$-trees encoded by the excursions of $W^{\lambda}$. Indeed, for each $i \ge 1$, $\mathrm{core}(\mathcal{A}_i)$ is a connected undirected multigraph with edge lengths which is empty if there are no point-identifications, is a loop if there is a single point-identification, and is otherwise 3-regular almost surely.  For each $i \ge 1$, if $\mathrm{core}(\mathcal{A}_i)$ is non-empty and we remove from it any points not contained in cycles, we obtain a collection of one or more multigraphs with edge-lengths which are again either loops or 3-regular.  Let us refer to these as the \emph{cycle-components}.  The MDMs $(\mathcal{C}_i, i \in \N)$ are similarly obtained by making (a different collection of) point-identifications in the $\R$-trees encoded by the excursions of $W^{\lambda}$. In this context, a single $\R$-tree may give rise to one or more strongly connected components, or indeed none. The fact that we obtain an $\ell^1$ convergence in Theorem~\ref{thm:main}, comes from the property that for very small $\sigma,$ an $\R$-tree with the same distribution as $\T_{\sigma}$ is very unlikely to produce any strongly connected components at all. 

It would be interesting to know if the distributions of the undirected version of $(\mathcal{C}_i, i \in \N)$ and of the decreasing ordering of all the cycle-components coming from $(\mathrm{core}(\mathcal{A}_i), i \in \N)$ are mutually absolutely continuous. We leave this as an open problem. 

\bigskip

The rest of this paper is structured as follows. In Section~\ref{sec:graphtheory}, we introduce some standard terminology and then describe the depth-first exploration which we use in order to understand the directed graph $\vec{G}(n,p)$.  A key role is played by a particular class of edges known as back edges, and we discuss back edges in both the discrete and continuum settings in Section~\ref{sec:backedges}.  In Section~\ref{sec:scalinglimit}, we prove some useful properties of the scaling limit $\mathcal{C}$.  Section~\ref{sec:mainproof} contains the proof of Theorem~\ref{thm:main}.  In Section~\ref{sec:furtherprops}, we prove the further properties of the scaling limit which were mentioned immediately after the main theorem.

\section{Some graph theory} \label{sec:graphtheory}
\subsection{Basic terminology}
We recall here some elementary graph theoretic terminology which we will use throughout the paper.

\medskip
\noindent\textbf{Directed graphs and strongly connected components.} Let $\vec{G}$ be a directed graph. For a directed edge $(x,y)$ of $\vec{G}$, we say that $x$ is the \emph{tail} of the edge and $y$ is its \emph{head}. For two vertices $x$ and $y,$ we also say that $x$ is a \emph{parent} of $y$ (and $y$ is a \emph{child} of $x$) if there is an edge from $x$ to $y$, and that $x$ is an \emph{ancestor} of $y$ (and $y$ is a \emph{descendant} of $x$) if there is a directed path from $x$ to $y$.

A directed graph $\vec{G}$ is \emph{strongly connected} if for every pair $\{u,v\}$ of distinct vertices of $\vec{G}$ there exists a directed path from $u$ to $v$ and a directed path from $v$ to $u$.  For a general directed graph $\vec{G}$, its \emph{strongly connected components} are the maximal strongly connected subgraphs.  The strongly connected components partition the vertex set but note that, unlike for undirected graphs, edges of $\vec{G}$ may lead from one strongly connected component to another.

\medskip

\noindent\textbf{Trees and plane trees.} A discrete tree is a connected undirected graph $T$ with no cycles. For two vertices $x$ and $y$ in $T$, we write $\llbracket x,y\rrbracket$ for the unique path between $x$ and $y.$ Our trees will often be rooted at a specified vertex $\rho$. This allows us to think of $T$ as a \emph{directed} graph, by orienting all of its edges away from $\rho.$  We write $|T|$ for the size of the vertex set of $T$ and $\|T\|$ for the \emph{height} of $T$, that is the largest distance between $\rho$ and another vertex.

A \emph{planar ordering,} also known as \emph{topological sort}, of a rooted tree $T$ is any total order $>$ on its vertex set such that every directed edge $(u,v)$ of $T$ is increasing, in the sense that $v > u$ (decreasing edges are defined similarly). A rooted plane tree is then a rooted tree endowed with a planar ordering.

\medskip

\noindent\textbf{Directed multigraphs.} 
Recall the definition of a directed multigraph from the introduction. Directed multigraphs have the same notion of ancestor and descendant as directed graphs, and have strongly connected components in the same way. Note that the loop is strongly connected.  The \emph{excess} of a strongly connected directed multigraph $(V,E,r)$ is defined to be $|V| - |E|$.  If the excess is strictly positive then we say that the multigraph is \emph{complex}.

\subsection{The exploration process}\label{sec:exploration}

The strongly connected components of any directed graph can be found in time which is linear in the sum of the sizes of the vertex and edge sets.  Several linear-time algorithms, including Tarjan's algorithm~\cite{Tarjan72} and the so-called path-based algorithms (see \cite{Gabow2000} for an example), rely on a \emph{depth-first search}, that is a procedure which consists in exploring the graph in such a way that, after we visit a vertex, we visit all of its as-yet unseen descendants before backtracking. Broadly speaking, as we traverse the graph, some information is kept in the form of a stack, which allows us to determine the strongly connected components.

For our study of $\vec{G}(n,p),$ we use a variant of these ideas to give a simple algorithm which does not directly yield the strongly connected components, but instead gives a specific \emph{plane} spanning forest which will be a key part of the structure of the strongly connected components. In order to find our spanning forest, we use the now-standard ordered depth-first search exposed, for example, in \cite{A-BBG12}, but with the modification that we only allow ourselves to follow edges in the direction of their orientation. The standard ordering on the vertex set $[n]$ is used to induce a planar ordering on the out-neighbours of a vertex.  Let us give a precise definition of the construction and, along the way, remind the reader of the depth-first exploration for undirected graphs. Let $\vec{G}$ (resp.\ $G$) be any directed graph (resp.\ undirected graph) on $[n].$ Inductively on $i\in \{0,\ldots,n\}$, we define an ordered list $\mathcal{O}_i$ of open vertices (the \emph{stack}) which have been seen but not yet explored, and a set $\mathcal{E}_i$ of explored vertices:

\begin{itemize}
\item[$\bullet$] $i=0$: let $\mathcal{O}_0=(1)$ and $\mathcal{E}_0=\emptyset.$
\item[$\bullet$] Induction step: given $\mathcal{O}_i$ and $\mathcal{E}_i$, let $v_i$ be the first vertex of $\mathcal{O}_i$ and let $\mathcal{E}_{i+1}=\mathcal{E}_i \cup \{v_i\}.$ Let $\mathcal{N}_i$ be the set of out-neighbours (resp.\ neighbours) of $v_i$ which are not in $\mathcal{O}_i\cup \mathcal{E}_i.$ Construct $\mathcal{O}_{i+1}$ by removing $v_i$ from $\mathcal{O}_i$, and adding in the elements of $\mathcal{N}_i$ in increasing order, so that the smallest element of $\mathcal{N}_i$ is now at the start of $\mathcal{O}_{i+1}.$ If, however, this leads to $\mathcal{O}_{i+1}=\emptyset,$ then add to it the smallest element of $\{1,\ldots,n\} \setminus \mathcal{E}_{i+1}.$
\end{itemize}

This procedure builds a directed spanning forest $\mathcal{F}_{\vec{G}}$ of $\vec{G}$, by saying that two vertices $x$ and $y$ are linked by an edge from $x$ to $y$ if there exists $i$ for which $x=v_i$ and $y\in \mathcal{N}_i.$ This is illustrated in Figure~\ref{fig2}, for the graph given by Figure~\ref{fig1}. We call $\mathcal{F}_{\vec{G}}$ the \emph{forward depth-first forest} of $\vec{G}$.

We also obtain a total order of $[n]$, given by $(v_0,\ldots,v_{n-1}),$ which is a planar ordering of $\mathcal{F}_{\vec{G}},$ in the sense that it is a topological sort of each of its trees and it also functions as a total order on the set formed by the trees.  We write $v_i \prec v_j$ iff $i < j$.  The edges of $\vec{G}$ may now be partitioned into two categories: the \emph{forward edges}, which are increasing for this order, and the \emph{back edges}, which are decreasing. The forward edges can themselves also be separated into two sets: those which are edges of $\mathcal{F}_{\vec{G}},$ and those which are not, which we call \emph{surplus} edges. (In the case of the undirected graph $G$, we still get a forest $\mathcal{F}_G$, but all edges of $G$ are either part of the forest or are surplus edges.)  

The combination of forward edges and back edges is what creates the strongly connected components of $\vec{G}.$ Notice in particular that, since there are no forward edges going between different trees of $\mathcal{F}_{\vec{G}},$ each strongly connected component lies within a single such tree. Moreover, since strongly connected components are made of cycles, any strongly connected component with at least two vertices must contain at least one forward and one back edge.  We call an \emph{ancestral} back edge one which goes from a vertex to one of its ancestors.  See Figure~\ref{fig2} for an illustration. The ancestral back edges will play a particularly important role in the sequel.

\begin{lem} \label{lem:surplusorback}
Each strongly connected component contains either a surplus edge or an ancestral back edge.
\end{lem}

\begin{proof}
Suppose there are no surplus edges.  Any strongly connected component has a least element $x_0$ for the depth-first ordering.  Then the vertex $x_0$ can only have in-edges from vertices which are later in the ordering, and must have at least one such.  Take the smallest of these, namely the edge $(x_1,x_0)$ with the smallest $x_1$ for the ordering. There must be a path from $x_0$ to $x_1$.  We claim that this path can use only forward edges of the tree.  Suppose not.  Then $x_1$ does not belong to the subtree rooted at $x_0$.  But then any path from $x_0$ to $x_1$ must use at least one surplus edge, which contradicts the assumption that there are none.  It follows that $x_0$ is an ancestor of $x_1$, and so the edge $(x_1,x_0)$ is ancestral.
\end{proof}

We deduce from this a useful bound: the number of strongly connected components of $\vec{G}$ is smaller than the sum of its numbers of surplus edges and ancestral back edges.

\begin{figure}[h]
\centering
\includegraphics[scale=0.7]{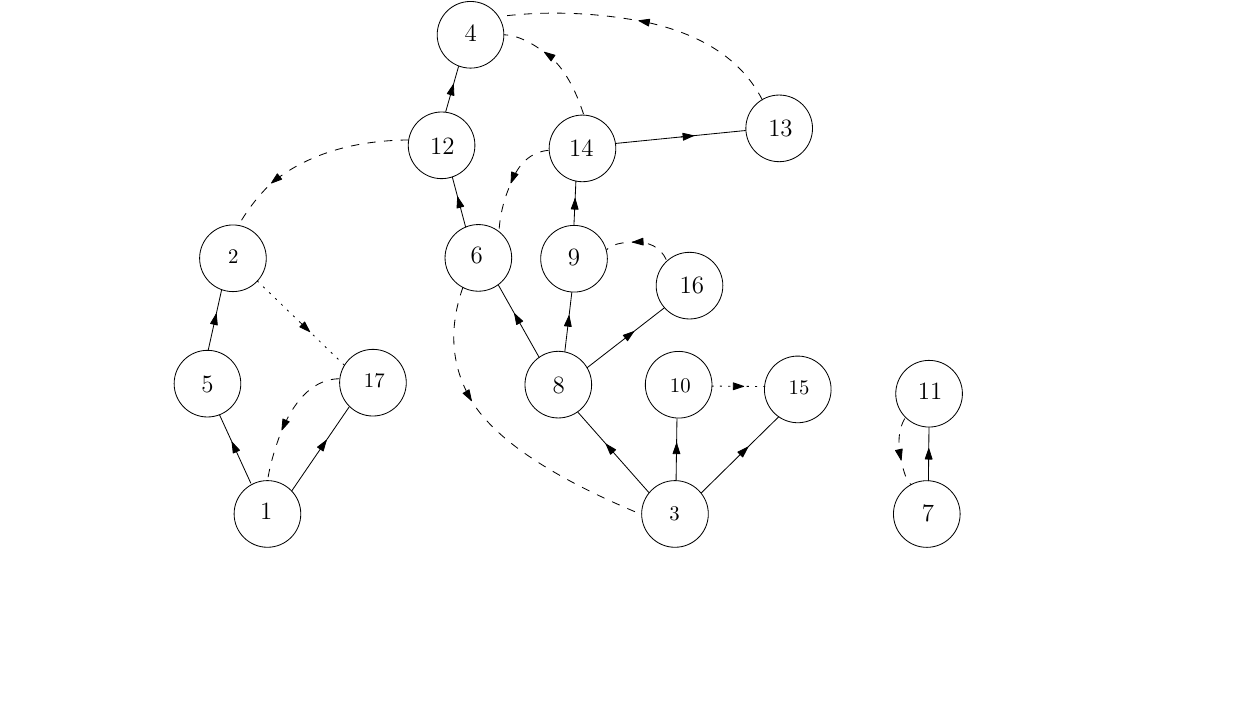}
\caption{The planar embedding of exploration forest of the graph in Figure~\ref{fig1}. Surplus edges and back edges are then dotted, and respectively straight and curved. Notice that there are back edges of both kinds: ancestral, such as $(6,3)$, and non-ancestral, such as $(14,6).$ The strongly connected components have vertex sets $\{3,6,8,9,14,16\}, \{1, 2, 5, 17\}, \{7,11\}, \{4\}, \{10\}, \{12\}, \{13\}$ and $\{15\}$.}
\label{fig2}
\end{figure}
Note that the surplus edges of $G$ are taken from the set of edges \emph{permitted} by $\mathcal{F}_{\vec{G}},$ which are the pairs $(u,v)$ such that there exists $i$ such that $u$ and $v$ are both in $\mathcal{O}_i$.  In this case, $v$ is a sibling of an ancestor of $u$ which occurs later in the planar ordering.\footnote{Note also that $\mathcal{F}_{\vec{G}}$ determines the exploration process fully, so defining the permitted edges using the $\mathcal{O}_i$ is unambiguous.} In fact, given $\mathcal{F}_{\vec{G}},$ we can add or remove any permitted edge to $\vec{G},$ and this will not change $\mathcal{F}_{\vec{G}}.$ The same holds true for back edges. Thus, conditionally on $\mathcal{F}_{\vec{G}(n,p)},$ the permitted surplus edges and back edges of $\vec{G}(n,p)$ appear independently with probability $p$. This leads to the following proposition, which allows us to relate $\vec{G}(n,p)$ to $G(n,p)$ by their explorations.
\begin{prop}\label{prop:coupling} For any directed graph $\vec{G}$ on $[n]$ we call $\vec{G}_{\mathrm{fwd}}$ the undirected graph obtained by removing the back edges of $\vec{G}$ and keeping the forward edges, but ignoring their orientation. We then have the following:
\begin{itemize}
\item[$(i)$] $\mathcal{F}_{\vec{G}(n,p)}\overset{(d)}=\mathcal{F}_{G(n,p)}$
\item[$(ii)$] $(\vec{G}(n,p))_{\mathrm{fwd}}\overset{(d)}=G(n,p)$
\item[$(iii)$] One can couple $G(n,p)$ and $\vec{G}(n,p)$ in the following way: first sample $G(n,p)$, which creates in particular a depth-first ordering on $\{1,\ldots,n\}.$ Then let $(\vec{G}(n,p))_{\mathrm{fwd}}=G(n,p),$ and add to it each of the possible back edges $(v_i,v_j)$ for $j<i$ independently with probability $p$.
\end{itemize}
\end{prop}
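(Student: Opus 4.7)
The plan is to prove (iii) first as the main content, then deduce (i) and (ii) as essentially immediate consequences. The key intuition, already flagged in the paragraph preceding the proposition, is that the depth-first exploration of a directed graph only ever queries the indicators of edges $(v_i, u)$ with $u \in \mathcal{U}_i := [n]\setminus(\mathcal{O}_i\cup \mathcal{A}_i)$. Hence neither back edges nor permitted forward edges are touched during the exploration, and under $\vec{G}(n,p)$ each such edge therefore remains an independent Bernoulli$(p)$ conditional on the entire exploration history.

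The technical heart of the argument is to establish that the map $\Phi:\vec{G}\mapsto (\vec{G}_{\mathrm{fwd}},\vec{G}_{\mathrm{back}})$, sending a directed graph to the pair (undirected forward graph, directed back graph), is a bijection, and that moreover $\mathcal{F}_{\vec{G}}$ is a deterministic function of $\vec{G}_{\mathrm{fwd}}$ alone. I would prove both by induction on $i$, showing that the undirected depth-first exploration of $\vec{G}_{\mathrm{fwd}}$ produces the same ordering $(v_0,v_1,\ldots)$ as the directed exploration of $\vec{G}$. The inductive step amounts to the observation that at stage $i$ the undirected neighbours of $v_i$ in $\mathcal{U}_i$ coincide with the directed out-neighbours of $v_i$ in $\mathcal{U}_i$, since any edge $(u,v_i)\in\vec{G}$ with $u\in\mathcal{U}_i$ is by definition a back edge (as $u$ is unseen at step $i$ it comes after $v_i$ in the ordering) and so contributes nothing to $\vec{G}_{\mathrm{fwd}}$. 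Given this, (iii) reduces to a direct probability calculation: for any directed graph $\vec{G}_0$ on $[n]$, the factor $\pr(\vec{G}(n,p)=\vec{G}_0)=p^{|E(\vec{G}_0)|}(1-p)^{n(n-1)-|E(\vec{G}_0)|}$ splits along the bijection $\Phi$ into $\pr(G(n,p)=\vec{G}_0^{\mathrm{fwd}})$ times the probability that the independently resampled back edges match $\vec{G}_0^{\mathrm{back}}$, and this is exactly the probability that the coupled procedure outputs $\vec{G}_0$. Part (ii) then follows by marginalising over the back edges, and (i) follows from (ii) combined with the deterministic identification $\mathcal{F}_{\vec{G}(n,p)}=\mathcal{F}_{(\vec{G}(n,p))_{\mathrm{fwd}}}$ supplied by the inductive argument.

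The hard part will be the inductive bijectivity argument for $\Phi$, which underpins the entire approach; everything else reduces to straightforward bookkeeping once the planar ordering (and hence the forest) has been identified as a deterministic function of the undirected forward graph alone. A minor subtlety worth tracking is that the image of $\Phi$ is not the full product of undirected and directed graphs on $[n]$: the second coordinate must consist only of edges that go backward in the planar ordering determined by the first, but this constraint is automatic in the coupled construction and causes no real difficulty.
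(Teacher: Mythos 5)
Your argument is correct, but it runs in the opposite direction to the paper's. The paper proves $(i)$, $(ii)$, $(iii)$ in order by a sequential ``revealment'' argument: at each step of the exploration only the indicators of edges from $v_i$ to unseen vertices are queried, so each step of the forward exploration of $\vec{G}(n,p)$ matches in law the corresponding step for $G(n,p)$, and then, conditionally on the forest, the permitted surplus edges and the back edges are still independent Bernoulli$(p)$, which gives $(ii)$ and $(iii)$ directly. You instead isolate a deterministic combinatorial lemma --- that the undirected depth-first exploration of $\vec{G}_{\mathrm{fwd}}$ reproduces the ordering (hence the forest) of the directed exploration of $\vec{G}$, so that $\Phi:\vec{G}\mapsto(\vec{G}_{\mathrm{fwd}},\vec{G}_{\mathrm{back}})$ is a bijection onto the pairs whose second coordinate is backward for the ordering determined by the first --- and then prove $(iii)$ by an exact factorisation of the probability mass function ($n(n-1)$ directed slots splitting as $\binom{n}{2}$ forward plus $\binom{n}{2}$ backward slots), deducing $(ii)$ by marginalising and $(i)$ from the deterministic identity $\mathcal{F}_{\vec{G}}=\mathcal{F}_{\vec{G}_{\mathrm{fwd}}}$. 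Your inductive step is sound: an edge $(v_i,u)$ with $u$ unseen is necessarily forward, while an edge $(u,v_i)$ with $u$ unseen is necessarily a back edge, so the two explorations see the same neighbourhoods; the same induction, applied to the coupled construction, also confirms the point you flag, namely that orienting the edges of $G(n,p)$ along its own exploration order and adding back edges yields a directed graph whose exploration order is unchanged. The trade-off is that the paper's dynamic argument is shorter and makes the conditional independence used later in the paper explicit, whereas your static bijection-plus-counting argument makes the measure identity in $(iii)$ an exact equality checked graph by graph, at the cost of having to state and prove the ordering-agreement lemma carefully.
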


\begin{proof}
The proof of \emph{(i)} is straightforward by induction: notice that, in the explorations of both $\vec{G}(n,p)$ and $G(n,p),$ for all $i$, given $\mathcal{O}_i$ and $\mathcal{E}_i$, the neighbourhood $\mathcal{N}_i$ contains each element of $\{1,\ldots,n\}\setminus (\mathcal{O}_i\cup \mathcal{E}_i)$ independently with probability $p$. Thus, each step of the forward exploration of $\vec{G}(n,p)$ has the same distribution as the corresponding step of the depth-first exploration of $G(n,p),$ and in particular the forests they build have the same distribution.

Part \emph{(ii)} is obtained by observing that, both for $\vec{G}(n,p)$ and $G(n,p),$ given the exploration forest, each permitted surplus edge is present independently with probability $p$. Similarly, \emph{(iii)} follows from the fact that, given $\vec{G}_{\mathrm{fwd}}(n,p)$, each back edge is present independently with probability $p$.
\end{proof}

This proposition motivates the study of a process which adds back edges to trees. The next section will formalise this, especially for the continuum trees which arise in the scaling limit of $G(n,1/n + \lambda n^{-4/3}).$

\section{Back edges on discrete and continuum trees} \label{sec:backedges}
We show that, when considering a plane tree with additional back edges, we can safely ignore a portion of the back edges and keep the same strongly connected components. We then adapt this idea to give a procedure for building a random finite set of backward identifications on a continuum tree, which is how $\mathcal{C}$ will be built.
\subsection{The discrete case}\label{sec:discretebackedges}
Let $T = (V(T), E(T))$ be a finite rooted plane tree, with root $\rho$ and $|V(T)| = n$.  We think of $T$ as a directed graph, by orienting all the edges away from $\rho.$  Let us write $(v_0, v_1, \ldots, v_{n-1})$ for the vertices of $T$ listed in the depth-first order explained in Section~\ref{sec:exploration}.  Recall that for two vertices $x$ and $y$ of $T$, $\llbracket x,y\rrbracket$ is the path between $x$ and $y$.

Consider a set $B$ of additional edges between elements of $V(T)$ which go backwards for the depth-first order, so that any element of $B$ is of the form $(v_i, v_j)$ with $v_j \prec v_i$ (i.e.\ $j < i$). Such an edge is called \emph{ancestral} if $v_j$ is an ancestor of $v_i$ in $T$.  It is useful to have an ordering on the elements of $B$, and we do this by declaring that $(v_i, v_j) \prec (v_k, v_l)$ if $i < k$ or if $i=k$ and $j < l$.  (This is just the lexicographic ordering on the pairs of indices $(i,j)$ and $(k,l)$.) Note that the elements of $B$ are thus listed in the order in which we would encounter them when performing a depth-first search of the directed graph $(V(T), E(T) \cup B)$.  

We now extract a subset $B^*=\big\{(x_i,y_i),i\leq N\big\}$ of $B$ inductively as follows (as illustrated in Figure~\ref{fig3}).
\begin{itemize}
\item Let $(x_1,y_1)$ be the first ancestral back edge in $B.$
\item Assume that we are now given $(x_j,y_j)$ for $j \le i.$ For $x\in V(T)$ such that $x \succ x_i$, define $T(i,x)=\bigcup_{j=1}^{i}\llbracket \rho,x_j \rrbracket\cup\llbracket \rho,x\rrbracket$.  Let $(x_{i+1},y_{i+1})$ be the smallest element $(x,y)$ of $B\setminus\big\{(x_j,y_j),j\in \{1,\ldots,i\}\big\}$ such that $y\in T(i,x).$ If there is no such element, then we end the procedure, and set $N=i.$
\end{itemize}

Observe that all of the ancestral edges in $B$ are elements of $B^*$, since if $(x,y)$ is ancestral then $y\in \llbracket \rho,x\rrbracket.$ Moreover, notice that, in the inductive part of the definition of $B^*,$ we could equivalently have defined $(x_{i+1},y_{i+1})$ to be smallest element $(x,y)$ of $B\setminus\big\{(x_j,y_j),j\in \{1,\ldots,i\}\big\}$ which is either ancestral or such that there is a directed path from $y$ towards an ancestral back edge. This implies that $B^*$ is exactly the set of back edges which are either themselves ancestral, or which lead to ancestral back edges.

\begin{prop}\label{prop:starcomponents} Let $X$ be the directed graph obtained by taking $T$ (with edges directed away from $\rho$) and adding all the edges of $B$. Let $X^*$ be the subgraph of $X$ where we remove any element of $B$ that is not in $B^*.$ Then $X$ and $X^*$ have the same strongly connected components.
\end{prop}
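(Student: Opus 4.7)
The plan is to establish the reverse inclusion, showing every SCC $C$ of $X$ is contained in an SCC of $X^*$; since $X^* \subseteq X$ already refines SCCs the other way, this yields equality. The case $|C|=1$ is trivial, so I fix a non-trivial SCC $C$.

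First I would analyse the cycle structure of $C$. Let $v^*$ be the DFS-smallest vertex of $C$. A standard argument shows $X[C]$ is strongly connected: if $u,v\in C$ and $u \to \cdots \to p \to \cdots \to v$ is a path in $X$, then $p$ lies on paths $u \to p$ and $p \to v$, and combined with $v \to u$ it gives $p$ strongly connected to $u$, hence $p \in C$. Any closed walk through $v^*$ in $X[C]$ must begin with a tree edge (a back edge would land below $v^*$ in DFS order, leaving $C$), and can never escape $T_{v^*}$, since escaping would require a back edge to an ancestor of $v^*$, which is DFS-earlier than $v^*$. This gives $C \subseteq T_{v^*}$ and, for each $v\in C$, a closed walk through $v^*$ and $v$ of the form
\[
v^* = w_0 \xrightarrow{\mathrm{tree}} u_1 \xrightarrow{\mathrm{back}} w_1 \xrightarrow{\mathrm{tree}} u_2 \to \cdots \xrightarrow{\mathrm{back}} w_k = v^*,
\]
alternating (possibly empty) tree segments with back edges $(u_i,w_i) \in B$, where each $u_i$ is a descendant of $w_{i-1}$ in $T$. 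Since $u_k$ is a strict descendant of $v^*$, the final back edge $(u_k,v^*)$ is ancestral.

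The main tool is a clean reformulation of the selection rule: writing $B^* := \{(x_j, y_j) : 1 \le j \le N\}$ and $R = \bigcup_{j=1}^N \llbracket \rho, x_j\rrbracket$, an edge $(u,v) \in B$ lies in $B^*$ if and only if $v \in R \cup \llbracket \rho, u\rrbracket$. Indeed, $T(i,u) = R_i \cup \llbracket \rho, u\rrbracket$ is monotone in $i$, and the procedure terminates only once no qualifying edge remains, so any $(u,v)$ satisfying the final condition must have been picked. An immediate corollary is that every ancestral back edge is selected (since $v \in \llbracket \rho, u\rrbracket$ trivially), which already gives $(u_k, v^*) \in B^*$.

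I would then perform a reverse induction along the walk, from $i = k$ down to $i = 1$. Assuming $(u_{i+1}, w_{i+1}) \in B^*$, we have $u_{i+1} = x_j$ for some $j$, so $\llbracket \rho, u_{i+1}\rrbracket \subseteq R$; the tree segment between $w_i$ and $u_{i+1}$ places $w_i$ on $\llbracket \rho, u_{i+1}\rrbracket \subseteq R$, and the characterisation forces $(u_i, w_i) \in B^*$. Hence the entire walk lies in $X^*$, so $v$ and $v^*$ are strongly connected in $X^*$. Letting $v$ range over $C$ yields $C \subseteq \mathrm{SCC}_{X^*}(v^*) \subseteq \mathrm{SCC}_X(v^*) = C$, giving the desired equality. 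The main obstacle is the structural step pinning down $C \subseteq T_{v^*}$ together with the alternating decomposition of closed walks in $X[C]$; once that is in place, the characterisation of $B^*$ and the reverse-propagation of ``selected'' back along the walk are essentially mechanical.
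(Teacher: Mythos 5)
Your proposal is correct and is essentially a carefully fleshed-out version of the paper's much terser argument: a closed walk through the DFS-minimal vertex $v^*$ of a component must close with an ancestral back edge (which is always retained), and your characterisation that $(u,v)\in B^*$ if and only if $v\in R\cup\llbracket \rho,u\rrbracket$ then propagates retention backwards along the walk, which is exactly the ``by construction'' step the paper leaves implicit. Two minor remarks: an escaping back edge need only land on a vertex that is DFS-earlier than $v^*$, not necessarily an ancestor of $v^*$ (and DFS-earlier is all your contradiction actually uses); and by routing the closed walk through any given back edge with both endpoints in $C$, your argument also shows the components agree as subgraphs rather than merely as vertex sets, which is the form in which the paper uses the result.
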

\begin{proof} Let $(x,y)$ be an element of $B$ which is in a strongly connected component of $X.$ We want to prove that it is in $B^*.$  Since there are no surplus edges, by Lemma~\ref{lem:surplusorback}, the strongly connected component must contain an ancestral back edge. Thus, it is possible to reach that ancestral edge starting from $y$ and following edges of $X$. Hence, $(x,y)$ is in $B^*$.  It follows that $X$ and $X^*$ have the same strongly connected components.
\end{proof}

\begin{figure}[h]
\centering
\includegraphics[scale=0.7]{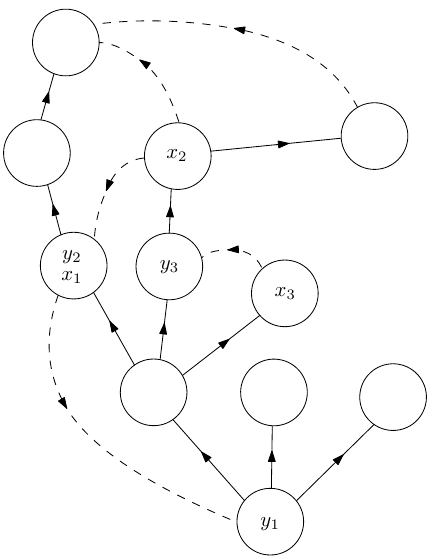}
\caption{The construction of Section~\ref{sec:discretebackedges} applied to the second tree of Figure~\ref{fig2}. Notice that the two topmost back edges are not contained in a strongly connected component, and they are not of the form $(x_i,y_i)$.}
\label{fig3}
\end{figure}

This seemingly innocuous lemma is, in fact, a key tool for us. Indeed, if $T$ is taken to be a large tree of $\mathcal{F}_{\vec{G}(n,p)}$ (meaning it has size of order $n^{2/3}$), and $B$ is the set of back edges of $\vec{G}(n,p)$ which join elements of $T$, then $B$ has size of order $n^{1/3}$. However, as we will see later, the number of back edges in $X^*$ remains of order $1,$ and in fact the $(x_i,y_i)$ will, in the scaling limit, converge to continuous analogues. This means that the reduction from $X$ to $X^*$, while not changing the strongly connected components, allows us to ignore the majority of the back edges at no cost.  (Let us emphasise, however, that in general there are back-edges in $X^*$ which are \emph{not} contained in any strongly connected component.)

\subsection{The continuum case} \label{sec:continuum}
\subsubsection{$\R$-trees and notation} \label{subsec:rtrees}
We recall here some basic terminology about $\R$-trees; more information concerning their use in probability may be found in the survey paper \cite{LG2005}. An \emph{$\R$-tree} is any metric space $(\T,d)$ such that
\begin{itemize}
\item For all $x,y \in \T$, there exists a unique distance-preserving map $\phi_{x,y}$ from $[0,d(x,y)]$ into $\mathcal{T}$ such  $\phi_{x,y}(0)=x$ and $\phi_{x,y}(d(x,y))=y.$ We write $\llbracket x,y\rrbracket$ for the image of $\phi_{x,y}.$

\item For all continuous and one-to-one functions $c$: $[0,1] \to \T $, we have $\\ c([0,1]) = \llbracket c(0),c(1)\rrbracket.$
\end{itemize}
Our $\R$-trees will typically be \emph{rooted}, which means we distinguish a point of $\T$ called the root, usually denoted $\rho.$ For $x,y\in\T$, we say that $x$ is an \emph{ancestor} of $y$, or that $y$ is a \emph{descendant} of $x,$ if $x\in \llbracket \rho,y\rrbracket,$ and we call the point $x\wedge y$ such that $\llbracket\rho,x\rrbracket\cap\llbracket\rho,y\rrbracket=\llbracket\rho,x\wedge y\rrbracket$ the \emph{most recent common ancestor of $x$ and $y$}. The \emph{degree} of a point $x \in \mathcal{T}$ is the number of connected components of $\mathcal{T} \setminus \{x\}$. If $x$ has degree 1 we call it a \emph{leaf}.  The \emph{height} $\|\mathcal{T}\|$ of $\T$ is the largest distance from $\rho$ to another point.

The $\R$-trees we encounter will all be encoded by functions. For $\sigma\in(0,\infty),$ a function $f:[0,\sigma]\to \R_+$ is called an \emph{excursion function} if it is continuous and $f(x)=0$ if and only if $x=0$ or $\sigma.$  Let $\hat{f}:[0,\sigma]^2 \to \R_+$ be defined by $\hat{f}(x,y)=\underset{s\in[x\wedge y,x\vee y]}\min f(s).$ Then $f$ encodes a pseudo-distance $d_f$ on $[0,\sigma]$, defined by $d_f(x,y)=f(x)+f(y)-2\hat{f}(x,y),$ and an $\R$-tree $\T_f,$ defined by
\[\T_f = [0,\sigma]/\{d_f=0\}.\]
The {natural projection} from  $[0,\sigma]$ to $\T_f$ will be called $p_f$, and we let the root of $\T_f$ be $p_f(0)=p_f(\sigma).$ $\T_f$ also inherits a natural total order from $[0,\sigma]$ which we call the \emph{planar order.} 

In the sequel, we will assume that the functions $f$ we consider have unique local minima.  Under this assumption, the resulting $\R$-tree $\mathcal{T}_f$ is binary (meaning its points all have degree at most 3). We also assume that the push-forward of the Lebesgue measure on $[0,\sigma
]$ onto $\mathcal{T}_f$ via $p_f$ is concentrated on the leaves so that, in particular, if $U$ is uniform on $[0,\sigma]$ then $p_f(U)$ is a leaf with probability 1. These properties hold almost surely for the \emph{random} excursion functions we will use in Subsection~\ref{subsec:boundssingletree}.



\subsubsection{Constructing the identifications}\label{sec:continuousbackedges}

We now describe a random process which will give us a finite number of point identifications which go backward for this planar ordering: pairs of points of the form $(x,y)$ with $x>y$ for the planar ordering and an ``arrow with zero length" pointing from $x$ to $y$. Specifically, we will define times $(s_i,i\in \{1,\ldots,N\})$ in $[0,\sigma]$, their projections $x_i=p_f(s_i)$ and points $y_i$ such that $y_i$ is in the subtree $\T_i=\bigcup_{j=1}^{i}\llbracket \rho,x_j \rrbracket.$

We start with the base case $i=1.$ Let $s_1$ be the first point of a Poisson point process on $[0,\sigma]$ with intensity $f(x)\mathrm d x$, and $x_1=p_f(s_1).$ Then let $y_1$ be a uniform random point on the segment $\llbracket \rho,x_1\rrbracket.$ If the Poisson point process has no points on $[0,\sigma],$ we let $N=0.$

Now let us assume that $(s_j,x_j,y_j)$ have been built for $j\in\{1,\ldots,i\}.$ For $t\in [s_i,\sigma],$ consider the subtree $\T_i(t)=\T_i \cup \llbracket \rho,p_f(t)\rrbracket$ and its length $\ell_i(t).$ Then, straightforwardly,

\begin{equation}\label{eq:rate}
\ell_i(t)=\sum_{j=1}^i \left(f(s_j)-\hat{f} (s_{j-1},s_j) \right) + f(t)-\hat{f}(t,s_i).
\end{equation}
Now let $s_{i+1}$ be the first arrival time of an independent Poisson point process on $[s_i,\sigma]$ of intensity $\ell_i(t)\mathrm d t,$ let $x_{i+1}=p_f(s_{i+1}),$ and let $y_{i+1}$ be uniform on the finite length space $\T_i(s_i),$ independently of everything else. If the Poisson point process has no points, we let $N=i.$


\begin{figure}[h]
\begin{center}
\includegraphics[scale=0.8]{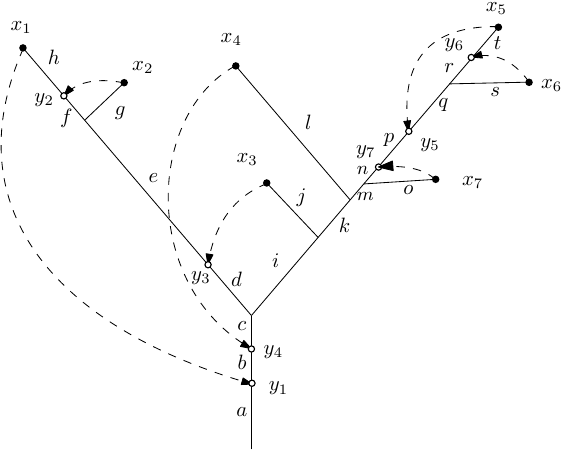}
\end{center}
\caption{Some identifications on a tree. Lengths of the segments are represented by $a,\ldots,t$ for the next figures.}
\label{fig:twogen}
\end{figure}

Observe that we necessarily have $s_1 \leq s_2 \leq \ldots \leq \sigma$ (so that, in particular, the first back edge in the planar ordering is always ancestral).  It is, however, in principle possible for the sequence $(s_i)_{k \ge 1}$ to accumulate, with the consequence that there are infinitely many back edges. In that case, we set $N=\infty.$ However, this in fact occurs with probability $0$. Indeed, by (\ref{eq:rate}), we have that $\ell_i(t) \le (i+1) \| f \|,$ with $\| \cdot \|$ denoting the usual supremum norm. Thus, for all $i\in\N$ and $t\geq 0,$ 
\[\pr[N\geq i \text{ and } s_i-s_{i-1}\leq t \mid N\geq i-1] \leq \pr [E_i \leq t]\]
where $(E_i,i\in\N)$ are independent exponential variables with respective parameters $(i \|f\|,i\in\N),$ and $s_{0}=0$ by convention. Hence we also have
\[\pr[N=\infty]=\pr \left[N=\infty \text{ and } \sup_{i\in\N}s_i\leq \sigma \right]\leq \pr \left[\sum_{i=1}^{\infty} E_i \leq \sigma \right]. \]
However, $\sum_{i=1}^{\infty} E_i=\infty$ a.s.\ by the divergence of the harmonic series and standard properties of sums of exponential variables (see e.g. Theorem 2.3.2 of \cite{NorrisMarkovChains}), and so the above probability is $0.$

%

We end this section with two elementary technical points.

\begin{lem}\label{lem:explicit} The joint distribution of $N$ and the $(s_i,i\leq N)$ can be written explicitly as:
\begin{align*}
& \pr[N=n,s_i\in\mathrm{d}t_i \,\forall i\leq n]= \\
&\qquad \qquad \prod_{k=1}^n \left(\sum_{i=1}^k \left( f(t_i) - \hat{f} (t_{i-1},t_i) \right) \right) \\ 
    & \qquad \qquad \times \exp{\left(-\int_0^{\sigma} \Bigg(f(t)-\hat{f}(t_{I(t)},t) + \sum_{i=1}^{I(t)}\left(f(t_i)-\hat{f}(t_{i-1},t_i) \right)\Bigg)\mathrm{d}t\right)} \mathrm{d}t_1\ldots\mathrm{d}t_n,
\end{align*}
where $t_0=0$ and, for $t\in[0,\sigma]$, $I(t)=\max\{i: t_i<t\}.$ 
\end{lem}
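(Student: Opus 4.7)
The approach is direct: the construction presents $(s_1,\ldots,s_N)$ as successive first-arrival times of a chain of conditionally independent inhomogeneous Poisson processes, so the joint density is obtained by applying the standard first-arrival density formula at each stage and then multiplying by the survival probability corresponding to the event $\{N=n\}$.

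More precisely, I would proceed as follows. For an inhomogeneous Poisson process on an interval $[a,b]$ with intensity $\lambda(t)$, the first arrival time has density $t \mapsto \lambda(t)\exp(-\int_a^t \lambda(s)\mathrm{d}s)$ on $[a,b]$, and the probability that no arrival occurs is $\exp(-\int_a^b \lambda(s)\mathrm{d}s)$. Applying this inductively, conditionally on $(s_1,\ldots,s_{i-1}) = (t_1,\ldots,t_{i-1})$ the density of $s_i$ at $t_i$ is $\ell_{i-1}(t_i)\exp(-\int_{t_{i-1}}^{t_i}\ell_{i-1}(s)\mathrm{d}s)$, with the convention $t_0=0$ and $\ell_0(t)=f(t)$ (since for $i=1$ we use the intensity $f$). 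Finally, conditionally on $(s_1,\ldots,s_n)=(t_1,\ldots,t_n)$ the event $\{N=n\}$ is exactly the event of no arrival of the Poisson process on $[t_n,\sigma]$ of intensity $\ell_n$, which has probability $\exp(-\int_{t_n}^{\sigma}\ell_n(s)\mathrm{d}s)$. Multiplying these factors yields
\[
\pr[N=n,s_i\in\mathrm{d}t_i\,\forall i\leq n] = \prod_{i=1}^{n}\ell_{i-1}(t_i)\,\exp\!\Big(\!-\!\sum_{k=1}^{n+1}\int_{t_{k-1}}^{t_k}\ell_{k-1}(s)\mathrm{d}s\Big)\mathrm{d}t_1\cdots\mathrm{d}t_n,
\]
with the additional convention $t_{n+1}=\sigma$.

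Next I would rewrite the two pieces in the form stated in the lemma. For the product of intensities, substitute the definition \eqref{eq:rate} of $\ell_{i-1}$ evaluated at $t_i$: using $s_j=t_j$ and the relation $\hat{f}(t_i,t_{i-1})=\hat{f}(t_{i-1},t_i)$, together with the boundary identity $f(0)=0$ (so that $\hat{f}(t_0,t_1)=0$ and $\ell_0(t_1)=f(t_1)$), one checks that
\[
\ell_{i-1}(t_i) \;=\; \sum_{j=1}^{i}\bigl(f(t_j)-\hat{f}(t_{j-1},t_j)\bigr),
\]
which is exactly the $k=i$ factor of the product in the lemma. For the exponential, observe that for $t\in(t_{k-1},t_k)$ we have $I(t)=k-1$, and
\[
f(t)-\hat{f}(t_{I(t)},t)+\sum_{i=1}^{I(t)}\bigl(f(t_i)-\hat{f}(t_{i-1},t_i)\bigr) \;=\; \ell_{k-1}(t),
\]
so that the single integral $\int_0^{\sigma}$ appearing in the statement decomposes, up to a set of Lebesgue measure zero, as $\sum_{k=1}^{n+1}\int_{t_{k-1}}^{t_k}\ell_{k-1}(t)\mathrm{d}t$, matching the exponential factor above.

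There is no real obstacle beyond this bookkeeping; the only points requiring care are the conventions $t_0=0$ and $t_{n+1}=\sigma$, the use of $f(0)=0$ to handle the first factor cleanly, and the identification of $\ell_{k-1}$ with the bracketed expression in the integrand when $I(t)=k-1$. Once these are in place, assembling the pieces gives exactly the formula stated.
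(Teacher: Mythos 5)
Your proof is correct and is exactly the computation the paper leaves implicit: the lemma is stated without proof as an ``elementary technical point'', the intended argument being precisely the sequential first-arrival density and survival-probability factorisation you carry out. Your bookkeeping (the conventions $t_0=0$, $t_{n+1}=\sigma$, the use of $f(0)=0$ so that $\ell_0(t_1)=f(t_1)$ matches the $k=1$ factor, and the identification of the integrand with $\ell_{k-1}(t)$ on $(t_{k-1},t_k)$ via $I(t)=k-1$) checks out, so there is nothing to add.
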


\begin{rem}\label{rem:otherconstruction} It will at times be convenient to consider the pairs $(x_i,y_i)$ which are \emph{ancestral}, i.e. such that $y_i$ is an ancestor of $x_i.$ Thus we let $\big((x_i^a,y_i^a),i\leq N^a\big)$ be those pairs, and $(s_i^a,i\leq N_a)$ the corresponding times in $[0,\sigma].$ Note that these are the points of a Poisson point process with intensity $f(x)\mathrm dx$ on $[0,\sigma].$ In particular, $N^a$ is a Poisson variable with parameter $\int_0^{\sigma} f(t)\mathrm d t,$ and conditionally on $N^a=1,$ the time $s_1^a$ has density proportional to $f$ on $[0,\sigma].$

On the other hand, if we let $\big((x_i^b,y_i^b),i\leq N^b\big)$ be the pairs which are \emph{not} ancestral, then the corresponding times $(s_i^b,i\leq N^b)$ can, conditionally on $(s_i^a,i\leq N^a),$ also be seen as the points of a Poisson point process. We will not need this full description, but the following expression will be useful:
\[\pr[N^b=0 \mid N^a=1,s_1^a]=\exp\left(-\int_{s_1^a}^{\sigma} \left( f(t)-\hat{f}(s_1^a,t) \right) \mathrm d t\right). \]
\end{rem}

\subsubsection{The resulting strongly connected components}

Let $\T_f^{\mathrm{mk}}=\mathcal{T}_N$ be the subtree spanned by the root and the marked leaves, and quotient it by the equivalence relation $\sim$ which identifies $x_i$ and $y_i$ for $i\in \{1,\ldots,N\}$, to obtain a rooted metric space
\[
\mathcal{M}_f=\T_f^{\mathrm{mk}}/\sim.
\] 
Since $\T_f^{\mathrm{mk}}$ has only finitely many leaves, we may also view $\mathcal{M}_f$ as a finite rooted directed multigraph $M_f$ whose edges are endowed with lengths: the vertices of $M_f$ are the images of the $(y_i)$ and of the branchpoints of $\T_f$, and the directions are inherited from $\T_f^{\mathrm{mk}}$ (which we always think of as having edges directed away from the root). We observe that, with the exception of the root (which can be a leaf), the vertices of $M_f$ all have degree at least 3.  Now remove all edges which do not lie in a strongly connected component of $M_f$ and delete any isolated vertices thus created.  This yields a collection of strongly connected components of minimum degree 2.  If there remain vertices of degree precisely two, we repeatedly apply the following merging operation.  Pick an arbitrary vertex of degree 2 and merge its two incident edges as long as they are different edges, summing their lengths. This yields a collection $\mathcal{C}_f$ of strongly connected MDMs, as illustrated in Figure~\ref{fig:strongconnect}.

\begin{figure}[h]
\begin{center}
\includegraphics[scale=0.7]{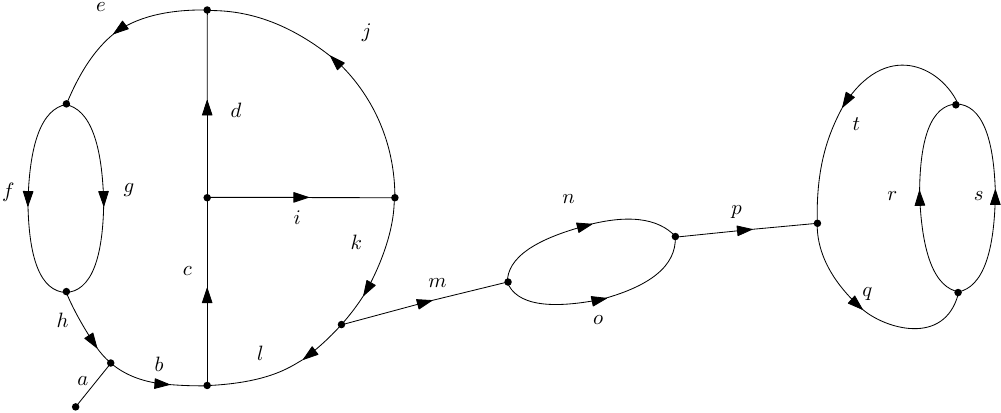}
\vspace{1cm}

\includegraphics[scale=0.7]{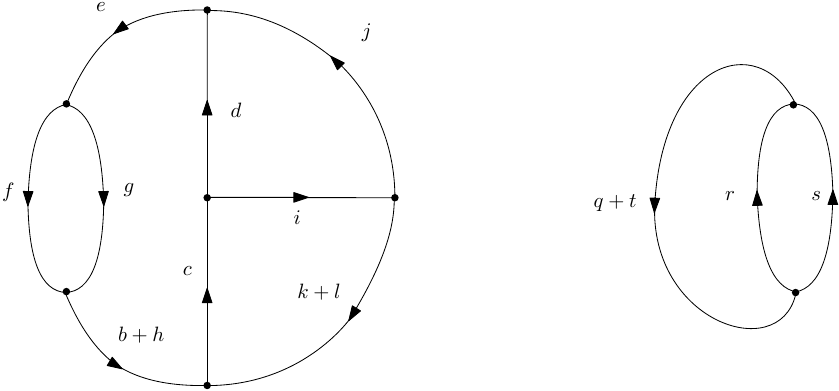}
\caption{A representation of $\mathcal{M}_f$ obtained from the identifications given in Figure~\ref{fig:twogen}, and the resulting strongly connected components.} \label{fig:strongconnect}
\end{center}
\end{figure}

\section{The scaling limit} \label{sec:scalinglimit}
\subsection{Excursions of Brownian motion with parabolic drift}
Let $(W(t),t\geq0)$ be a standard Brownian motion. For $\lambda\in \R$ and $t\geq0,$ let $W^{\lambda}(t)=W(t)+\lambda t -t^2/2$ and let $\underline{W}^{\lambda}(t)=\underset{0\leq s\leq t} \inf W^{\lambda}(s).$ Let $B^{\lambda}(t)=W^{\lambda}(t)-\underline{W}^{\lambda}(t)$, and let $\Gamma^{\lambda}$ be the set of excursions of $B^{\lambda}.$ For an excursion $\gamma \in \Gamma^{\lambda}$, let $|\gamma|$ denote its length.

\begin{prop} \label{prop:excursionlengths}
\begin{itemize}
\item[(i)]For $\alpha\in\{2,3\}$, we have $\E\left[\sum_{\gamma\in\Gamma^{\lambda}} |\gamma|^{\alpha}\right]< \infty$.
\item[(ii)] $\sum_{\gamma\in\Gamma^{\lambda}} |\gamma|^{3/2}=\infty$ a.s.
\end{itemize}
\end{prop}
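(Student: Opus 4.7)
For part (i), I would start from the pathwise identity
\[
\sum_{\gamma\in\Gamma^\lambda} |\gamma|^\alpha = \int_0^\infty L(t)^{\alpha-1}\mathbb{1}_{\{B^\lambda(t)>0\}}\, dt,
\]
where $L(t)$ is the length of the excursion of $B^\lambda$ straddling time $t$ (and $0$ otherwise). Taking expectations and applying Fubini reduces the claim to proving integrability of $t\mapsto \E[L(t)^{\alpha-1}\mathbb{1}_{\{B^\lambda(t)>0\}}]$ on $[0,\infty)$. I would then split the $t$-integral at some threshold $T_0(\lambda)$. For $t\le T_0$, the law of $W^\lambda|_{[0,T_0]}$ is absolutely continuous with respect to Wiener measure by Girsanov's theorem, so the expectation is controlled by classical excursion estimates for reflected Brownian motion. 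For $t>T_0$, the parabolic drift is strongly negative: conditionally on $B^\lambda(t)=b>0$, the time to the next minimum of $W^\lambda$ is dominated by the hitting time of $-b$ by a Brownian motion with constant drift $\lambda-t$, whose mean is $O(b/t)$ with exponentially light tails. Combined with Gaussian-type tails for $B^\lambda(t)$ (via stochastic domination by the stationary exponential distribution of constant-drift reflected Brownian motion), this gives super-polynomial decay of the integrand in $t$ for $\alpha\in\{2,3\}$, hence integrability.

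For part (ii), the plan is to first show $\E[\sum_\gamma |\gamma|^{3/2}]=\infty$ and then upgrade this to almost sure divergence via a concentration argument. Divergence of the expectation follows from the pathwise identity $\int_0^\infty B^\lambda(t)\,dt = \sum_\gamma A_\gamma$, where $A_\gamma$ is the area of excursion $\gamma$: conditionally on $|\gamma|$, the excursion is close in law to a Brownian excursion of length $|\gamma|$, and so $\E[A_\gamma\mid |\gamma|]$ is of order $|\gamma|^{3/2}$. On the other hand, an approximate-stationarity argument ($B^\lambda(t)$ resembling for large $t$ a Brownian motion with constant drift $\lambda-t$ reflected from its past minimum, whose stationary law is exponential with mean $\frac{1}{2(t-\lambda)}$) gives $\E[B^\lambda(t)]\sim \frac{1}{2t}$, making $\int \E[B^\lambda(t)]\,dt$ logarithmically divergent.

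For the almost sure statement, I would decompose $\sum |\gamma|^{3/2}$ dyadically: letting $N_k=\#\{\gamma : |\gamma|\in[2^{-k-1},2^{-k}]\}$, one has $\sum_\gamma |\gamma|^{3/2}\ge \sum_k 2^{-3(k+1)/2} N_k$. Using the approximate Poisson structure of excursions (as a point process with intensity $\sim \ell^{-3/2}e^{-(t-\lambda)^2\ell/2}\,d\ell\,dL_t$ and $dL_t/dt\sim t$), one integrates to obtain $\E[N_k]\asymp 2^{3k/2}$ with $\mathrm{Var}(N_k)=O(\E[N_k])$. Thus each dyadic contribution $2^{-3(k+1)/2}N_k$ has mean $\Theta(1)$ and variance $O(2^{-3k/2})$. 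Since excursion counts in disjoint length intervals are asymptotically independent (corresponding to disjoint pieces of the underlying Poisson measure), the variance of the partial sum up to level $K$ is bounded uniformly in $K$ while its mean grows linearly in $K$; Chebyshev's inequality then upgrades mean divergence to almost sure divergence, and monotonicity in $K$ concludes. The main obstacle here is justifying the quasi-stationary/Poisson approximation for $B^\lambda$ rigorously enough to obtain two-sided moment bounds on $N_k$ uniform in $k$, which I would do by coupling $B^\lambda$ to constant-drift reflected Brownian motions on a sequence of time windows within which the drift $\lambda-t$ varies negligibly.
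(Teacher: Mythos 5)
For part (i) your plan is essentially the paper's argument: an occupation-time identity, Fubini, and then for large $t$ stochastic domination of the excursion time by a hitting time of Brownian motion with constant drift $\lambda-t$, combined with an exponential tail for $B^{\lambda}(t)$. One caveat: you work with the \emph{straddling} length $L(t)=A_t+H_t$ (age plus residual), but the estimates you describe (hitting the next zero after time $t$) only control the residual $H_t$; you would still need a separate bound on the age $A_t$, e.g.\ via $\pr[A_t>s,\,B^{\lambda}(t)>0]\le\pr[B^{\lambda} \text{ has no zero in } [t-s,t]]$. This complication disappears if you instead use the residual-only identity $\sum_{\gamma}|\gamma|^{\alpha}=\alpha\int_0^{\infty}H_t^{\alpha-1}\,\mathrm dt$, which is exactly what the paper does (with $H_t$ dominated by $T(B^{\lambda}(t),t-\lambda)$, whose first two moments are explicit, and $B^{\lambda}(t)$ dominated by an exponential of rate $t-2\lambda$).

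For part (ii) your route is genuinely different from the paper's, and its crux is left unproven. The paper never needs quantitative excursion-count asymptotics: it first reduces (Lemma \ref{lem:sumintegral}) the a.s.\ divergence of $\sum_{\gamma}|\gamma|^{3/2}$ to the a.s.\ divergence of $\int_0^{\infty}B^{\lambda}(t)\,\mathrm dt$, using that, conditionally on the lengths, the excursions are independent tilted Brownian excursions whose conditional expected areas are comparable to $\sigma_i^{3/2}$; it then proves $\int_0^{\infty}B^{\lambda}(t)\,\mathrm dt=\infty$ a.s.\ by a purely pathwise comparison: an auxiliary diffusion $Z$ driven by the same $W$, which after a time change $Y(s)=\theta(s)Z(\theta(s))$, $\theta(s)=(3s)^{1/3}$, becomes Brownian motion with drift $-3$, eventually has smaller increments than $W^{\lambda}$, yielding $B^{\lambda}(t)\ge \frac1t\bigl(Y-\underline{Y}\bigr)(t^3/3)$, and the divergence of $\int (Y-\underline Y)(s)\,\mathrm ds/s$ follows from positive recurrence of reflected drifting Brownian motion and a law of large numbers over downcrossing intervals. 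Your alternative — a second-moment method on dyadic counts $N_k$ — is plausible in outline (the heuristics $\mathrm dL_t\approx t\,\mathrm dt$ and excursion intensity $\propto \ell^{-3/2}e^{-t^2\ell/2}$ do give $\E[N_k]\asymp 2^{3k/2}$, and with $\Var(N_k)=O(\E[N_k])$ even Cauchy--Schwarz on the covariances, together with monotonicity of the partial sums, would close the Chebyshev argument without any independence claim). But the two-sided bound on $\E[N_k]$ uniform in $k$ and, above all, the variance bound are precisely the hard technical content, and you only assert them: the excursion point process is \emph{not} Poisson, since the intensity at a given local time depends on the real time $\tau_u$, which is itself built from the previous excursions; the resulting local-time fluctuations contribute variance that is heuristically of the \emph{same} order $2^{3k/2}$ as the Poissonian term, so the claim is borderline and the proposed window-by-window coupling would have to be carried out carefully (including lower bounds on the local-time rate, and the early-time regime where the drift is not yet strongly negative). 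As it stands this is a substantive gap rather than a routine verification; the paper's comparison argument is softer and avoids it entirely. (Also note that your preliminary step, $\E[\sum_{\gamma}|\gamma|^{3/2}]=\infty$, is not needed once the dyadic argument is in place, and by itself gives nothing about almost sure divergence.)
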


The $\alpha = 2$ case of $(i)$ is Lemma 25 of Aldous~\cite{aldous1997}, which we extend here to $\alpha=3$.  (Our method of proof also works for all $\alpha>3/2$  but we omit the details for the sake of brevity.)  We first need a standard result on moments of hitting times of Brownian motion with constant drift.

\begin{lem} For $\mu>0$ and $b>0$, let $T(b,\mu)=\inf\{t\geq0:\, W(t)-\mu t = -b\}.$ Then we have
\[\E[T(b,\mu)]=\frac{b}{\mu} \quad\text{and}\quad \E \left[\big(T(b,\mu)\big)^2\right]=\frac{b(1+ b \mu)}{\mu^3}.\]
\end{lem}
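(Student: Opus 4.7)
The plan is to compute both moments via the Laplace transform of $T(b,\mu)$. Since $-W$ is also a standard Brownian motion, $T(b,\mu)$ is the first hitting time of level $b$ by $-W(t)+\mu t$, a Brownian motion with strictly positive drift $\mu>0$; hence $T(b,\mu)<\infty$ almost surely.

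The main step is to establish
$$\phi(\lambda) := \E[e^{-\lambda T(b,\mu)}] = \exp\bigl(-b(\sqrt{\mu^2+2\lambda}-\mu)\bigr), \qquad \lambda \ge 0,$$
via an exponential-martingale argument. Define $\theta = \theta(\lambda) = \sqrt{\mu^2+2\lambda}-\mu > 0$, chosen so that $\theta\mu + \theta^2/2 = \lambda$. Then
$$M_t := \exp\bigl(-\theta(W(t)-\mu t)-\lambda t\bigr) = \exp\bigl(-\theta W(t) - \theta^2 t/2\bigr)$$
is the standard exponential martingale. Because $W(s)-\mu s \ge -b$ for $s \le T(b,\mu)$, the stopped process $M_{t\wedge T(b,\mu)}$ is bounded above by $e^{\theta b}$, so dominated convergence combined with optional stopping at $T(b,\mu)$ yields $\E[e^{\theta b - \lambda T(b,\mu)}]=1$, which rearranges to the displayed formula for $\phi$.

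The moments are then read off by differentiating $\phi$: a short computation gives
$$\phi'(\lambda) = -\frac{b}{\sqrt{\mu^2+2\lambda}}\,\phi(\lambda), \qquad \phi''(\lambda) = \left(\frac{b}{(\mu^2+2\lambda)^{3/2}} + \frac{b^2}{\mu^2+2\lambda}\right)\phi(\lambda),$$
and evaluating at $\lambda=0$ produces $\E[T(b,\mu)] = -\phi'(0) = b/\mu$ and $\E[T(b,\mu)^2] = \phi''(0) = b/\mu^3 + b^2/\mu^2 = b(1+b\mu)/\mu^3$, as claimed. The only non-trivial point is the passage to the limit in optional stopping at the unbounded time $T(b,\mu)$, but this is immediate from the uniform bound $M_{t\wedge T(b,\mu)}\le e^{\theta b}$ noted above; the remaining manipulations are purely algebraic.
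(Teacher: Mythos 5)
Your proof is correct and follows essentially the same route as the paper: both obtain the moments by differentiating the Laplace transform $\E[e^{-\lambda T(b,\mu)}]=\exp\bigl(b\mu-b\sqrt{\mu^2+2\lambda}\bigr)$ twice at $\lambda=0$. The only difference is that the paper simply cites this transform from Karatzas--Shreve, whereas you derive it yourself via the exponential martingale and optional stopping, which makes the argument self-contained; your martingale bound $M_{t\wedge T(b,\mu)}\le e^{\theta b}$ and the algebra are all in order.
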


\begin{proof} The Laplace transform of $T(b,\mu)$  is given by
\[
\E\left[e^{-\theta T(b,\mu)}\right]=\exp\left( b\mu - b\sqrt{\mu^2+2\theta}\right), \:\theta>0,
\]
(see, for example, Exercise 5.10 in Chapter 3 of \cite{KaratzasShreve}) and the first two moments of $T(b,\mu)$ follow from differentiating twice.
\end{proof}

\begin{proof}[Proof of Proposition~\ref{prop:excursionlengths} $(i)$.]  We adapt the proof of Lemma 25 of Aldous~\cite{aldous1997}.  The time-interval $[0,\infty)$ may be split into the union of a countable collection of open intervals during each of which the reflected Brownian motion with drift $B^{\lambda}$ is away from zero (the \emph{excursion intervals}), and the complementary set of times (when the process is at 0), which has zero Lebesgue measure.  (For a careful proof of these excursion-theoretic facts, see Section 3.2 of Goldschmidt and Conchon-Kerjan~\cite{C-KG}.) Let $\gamma$ be an excursion of $B^{\lambda},$ and let $l$ and $r$ be its endpoints. We have
\[|\gamma|^{3}=3 \int_l^r(r-t)^{2} \mathrm d t.\]
For $t\geq 0$, write $H_t=\min \{s>0: B^{\lambda}(t+s)=0\}$.  Then we have
\[\sum_{\gamma\in\Gamma^{\lambda}} |\gamma|^{3}=3\int_0^{\infty}H_t^{2}\mathrm d t\]
and so we only need to prove that  $\int_0^{\infty} \E[H_t^2]\mathrm d t<\infty.$ To do this we split the integral into $\int_0^{\tau} \E[H_t^2]\mathrm d t$ and $\int_{\tau}^{\infty} \E[H_t^2]\mathrm d t$ where $\tau= \max (0,2\lambda+1).$

For $t >\max(0,\lambda)$ and $s\geq 0$, we have $\lambda s - (t+s)^2/2 + t^2/2 \leq (\lambda-t)s.$ Thus, conditionally on $B^{\lambda}(t)$, the process $(B^{\lambda}(t+s),s\geq0)$ may, until it hits $0$, be coupled with a Brownian motion with initial value $B^{\lambda}(t)$ and drift $\lambda-t,$ in such a way that the latter is larger than or equal to the former. Hence $H_t$ is stochastically dominated by $T(B^{\lambda}(t), t - \lambda),$ which leads to
\[\E[H_t\mid B^{\lambda}(t)]\leq\frac{B^{\lambda}(t)}{t-\lambda} \quad\text{and}\quad \E[(H_t)^2\mid B^{\lambda}(t)]\leq\frac{B^{\lambda}(t)(1+(t-\lambda) B^{\lambda}(t))}{(t-\lambda)^3}.\]
In particular, we have 
\[\int_{\tau}^{\infty} \E[H_t^{2}]\mathrm d t\leq \int_{\tau}^{\infty} \frac{\E[B^{\lambda}(t)]+(t-\lambda)\E\left[\left(B^{\lambda}(t)\right)^2\right]}{(t-\lambda)^3}\mathrm d t.\]
However, it is also established in the proof of Lemma 25 of \cite{aldous1997} that, for $t>2\lambda,$ the random variable $B^{\lambda}(t)$ is stochastically dominated by an exponential random variable with parameter $t-2\lambda$, implying that $\E[B^{\lambda}(t)]\leq 1/(t-2\lambda)$ and $\E[(B^{\lambda}(t))^2]\leq 2/(t-2\lambda)^2.$ In consequence, $\int_{\tau}^{\infty} \E[H_t^{2}]\mathrm d t<\infty.$

To bound $\int_0^{\tau} \E[H_t^2]\mathrm d t,$ notice that we have $H_t \leq \tau-t + H_{\tau}\leq \tau + H_\tau$ for $t\leq \tau$. Hence,
\begin{align*}
\E[H_t^2]&\leq \tau^2+2\tau\E[H_{\tau}]+\E[H_{\tau}^2]<\infty \\
&\leq \tau^2+2\tau\frac{\E[B^{\lambda}(\tau)]}{\tau-\lambda}+\frac{\E[B^{\lambda}(\tau)]}{(\tau-\lambda)^3}+\frac{\E[(B^{\lambda}(\tau))^2]}{(\tau-\lambda)^2},
\end{align*}
and this uniform upper bound is finite since $B^{\lambda}(\tau)$ is stochastically dominated by an exponential variable which has moments of all orders. It follows that we do indeed have $\int_0^{\tau} \E[H_t^2]\mathrm d t<\infty.$
\end{proof}

Part $(ii)$ also requires a preliminary lemma, which allows us to work directly with $B^{\lambda}$ instead of powers of its excursion lengths.
\begin{lem}\label{lem:sumintegral}
If $\int_0^{\infty} B^{\lambda}(t) \mathrm{d} t =\infty$ a.s.\ then $\sum_{\gamma\in\Gamma^{\lambda}} |\gamma|^{3/2}=\infty$ a.s.
\end{lem}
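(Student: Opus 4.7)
I argue by contrapositive: assume $\mathbb{P}\bigl(\sum_{\gamma\in\Gamma^\lambda}|\gamma|^{3/2}<\infty\bigr)>0$, and deduce $\mathbb{P}\bigl(\int_0^{\infty}B^{\lambda}(t)\,\mathrm dt<\infty\bigr)>0$, which contradicts the hypothesis. The starting point is the decomposition
\[
\int_0^{\infty}B^{\lambda}(t)\,\mathrm dt = \sum_{\gamma\in\Gamma^{\lambda}} A(\gamma), \quad\text{where}\quad A(\gamma) := \int_{l_{\gamma}}^{r_{\gamma}}B^{\lambda}(t)\,\mathrm dt.
\]
Let $\mathcal F$ be the $\sigma$-algebra generated by the collection of excursion intervals $\{(l_\gamma,r_\gamma)\}_\gamma$. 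The plan is to establish, for all but finitely many $\gamma$, a conditional bound of the form $\mathbb E[A(\gamma)\mid \mathcal F]\leq C|\gamma|^{3/2}$.

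For the first step I use the crude bound $A(\gamma)\leq |\gamma|\cdot M(\gamma)$ where $M(\gamma):=\sup_{t\in[l_\gamma,r_\gamma]}B^\lambda(t)$. On any excursion interval, $B^{\lambda}(l_{\gamma}+s)=W(l_{\gamma}+s)-W(l_{\gamma})+(\lambda-l_{\gamma})s-s^2/2$ for $s\in[0,|\gamma|]$. For $l_\gamma\geq\lambda$ the deterministic drift term is non-positive, so pathwise $M(\gamma)\leq \sup_{0\leq s\leq|\gamma|}(W(l_\gamma+s)-W(l_\gamma))$; and only finitely many excursions have $l_\gamma<\lambda$, so their total contribution to $\int B^\lambda$ is a.s.\ finite and can be set aside. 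The heart of the proof is then a comparison argument yielding $\mathbb E[M(\gamma)\mid\mathcal F]\leq C\sqrt{|\gamma|}$, hence $\mathbb E[A(\gamma)\mid\mathcal F]\leq C|\gamma|^{3/2}$.

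Summing over $\gamma$ gives $\mathbb E\bigl[\int_0^\infty B^\lambda(t)\,\mathrm dt\,\big|\,\mathcal F\bigr]\leq C\sum_\gamma|\gamma|^{3/2}+C'$ almost surely. On the $\mathcal F$-measurable event $\{\sum_\gamma|\gamma|^{3/2}\leq K\}$, the right-hand side is bounded, so $\int_0^\infty B^\lambda(t)\,\mathrm dt$ is finite a.s.\ there; taking a union over $K\in\mathbb N$ concludes the contrapositive.

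The main obstacle is the conditional estimate $\mathbb E[M(\gamma)\mid\mathcal F]\leq C\sqrt{|\gamma|}$. Conditionally on $\mathcal F$, the law of an excursion of $B^\lambda$ of length $\sigma$ started at $l\geq\lambda$ is absolutely continuous with respect to a standard Brownian excursion of length $\sigma$ via Girsanov applied to the drift $s\mapsto \lambda-l-s$; using the boundary values $B^\lambda(l_\gamma)=B^\lambda(r_\gamma)=0$, integration by parts shows the tilt factor has the form $\exp(A(\gamma)-D(l,\sigma)/2)$ with $D$ deterministic. Although this tilt biases towards \emph{larger} areas, the exponential moments of the Brownian-excursion maximum and area remain controlled well enough to recover the $\sqrt{\sigma}$ scaling. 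An alternative route that avoids the delicate conditional computation is to apply the Brownian modulus of continuity pathwise to obtain $M(\gamma)\leq C_\omega\sqrt{|\gamma|\log(1/|\gamma|)}$ for all sufficiently small excursions, and then tame the logarithmic factor via a dyadic decomposition together with the a.s.\ finiteness of $\sum_\gamma|\gamma|^2$ supplied by Proposition~\ref{prop:excursionlengths}(i).
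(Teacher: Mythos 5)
Your main route is, at its core, the paper's own proof. The paper likewise decomposes $\int_0^{\infty} B^{\lambda}(t)\,\mathrm{d}t$ over the excursions, quotes from Section 5 of \cite{A-BBG12} exactly the fact you re-derive via Girsanov --- conditionally on the interval structure, each excursion of length $\sigma$ is an area-tilted Brownian excursion $\tilde{\exc}^{(\sigma)}$, independently over excursions --- and then concludes by a conditional expectation argument: $\E\big[\int_0^{\sigma}\tilde{\exc}^{(\sigma)}(x)\,\mathrm{d}x\big] = O(\sigma^{3/2})$ as $\sigma \to 0$, so finiteness of $\sum_{\gamma}|\gamma|^{3/2}$ on an event of positive probability would force $\int_0^{\infty}B^{\lambda}(t)\,\mathrm{d}t<\infty$ with positive probability, contradicting the hypothesis. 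The differences are cosmetic: the paper bounds the conditional expected area of the tilted excursion directly (for $\sigma\le 1$ one has $\E[\mathcal{A}e^{\sigma^{3/2}\mathcal{A}}]/\E[e^{\sigma^{3/2}\mathcal{A}}]\le \E[\mathcal{A}e^{\mathcal{A}}]<\infty$ with $\mathcal{A}=\int_0^1\exc$, by the sub-Gaussian tails of $\sup\exc$), which is a little cleaner than passing through the maximum; and once you have the Girsanov identification, the drift-sign discussion and the restriction $l_\gamma\ge\lambda$ are unnecessary, since the tilt is by the area whatever $l_\gamma$ is.

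Two points in your write-up should be fixed. First, the claim that only finitely many excursions have $l_\gamma<\lambda$ is false when $\lambda>0$: the running infimum produces infinitely many small excursions in any initial time interval. The set-aside still works, because all such excursions are contained in a bounded time interval and so contribute at most $\int_0^T B^{\lambda}(t)\,\mathrm{d}t<\infty$ a.s.\ for a finite random $T$; similarly you must set aside the a.s.\ finitely many excursions of length exceeding $1$ (finiteness follows from Proposition~\ref{prop:excursionlengths}(i) with $\alpha=2$) so that the constant in $\E[\sup\tilde{\exc}^{(\sigma)}]\le C\sqrt{\sigma}$, and hence in $\E[A(\gamma)\mid\mathcal{F}]\le C|\gamma|^{3/2}$, is uniform. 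Second, the proposed ``alternative route'' has a genuine gap and should be dropped: knowing $\sum_\gamma|\gamma|^{3/2}\le K$ and $\sum_\gamma|\gamma|^{2}<\infty$ does not control $\sum_\gamma|\gamma|^{3/2}\sqrt{\log(1/|\gamma|)}$. For instance, lengths with $|\gamma_n|^{3/2}=\big(n\log n(\log\log n)^2\big)^{-1}$ satisfy both summability conditions while the log-corrected sum diverges, and a dyadic count of excursions per length scale runs into the same obstruction; moreover, Lévy's modulus constant is not uniform over the whole half-line. So the tilted-excursion (or equivalent conditional) computation is not avoidable by that pathwise shortcut.
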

\begin{proof}
Let $(\sigma_1, \sigma_2, \ldots)$ be the lengths of the excursions of $B^{\lambda}$ listed in decreasing order, and let $\tilde{e}_1, \tilde{e}_2, \ldots$ be the excursions themselves, so that
\[
\int_0^{\infty} B^{\lambda}(t) \mathrm{d}t = \sum_{i \ge 1} \int_0^{\sigma_i} \tilde{e}_i(x) \mathrm{d} x.
\]
Now, we have by~\cite[Section 5]{A-BBG12} that $\tilde{e}_i \overset{d}{=} \tilde{\exc}^{(\sigma_i)}$ for $i \ge 1,$ and the excursions $(\tilde{e}_i, i \ge 1)$ are conditionally independent given their lengths. Moreover, calling $\exc$ a normalised Brownian excursion, we have using Brownian scaling
\[
\E\left[\int_0^{s} \tilde{\exc}^{(s)}(x) \mathrm{d} x\right]=s^{3/2}\frac{\E\left[\int_0^1\exc(x)dx\exp \left(s^{3/2} \int_0^1\exc(x)dx \right)
\right]}{\E\left[\exp \left( s^{3/2}\int_0^1\exc(x)dx \right)\right]}\underset{s \to 0}\sim s^{3/2}\E\left[\int_0^1\exc(x)dx\right].
\]
It follows that, almost surely
\[
\E \left[ \sum_{i \ge 1} \int_0^{\sigma_i} \tilde{e}_i(x) \mathrm{d} x \ \Bigg| \ (\sigma_j)_{j \ge 1} \right] = \sum_{i \ge 1} \E \left[ \int_0^{\sigma_i} \tilde{\exc}^{(\sigma_i)}(x) \mathrm{d} x \Big| \sigma_i \right] < \infty 
\]
if and only if
\[
\sum_{i \ge 1} \sigma_i^{3/2} \E \left[\int_0^{1} \exc(x) \mathrm{d} u \right] < \infty, 
\]
which itself occurs if and only if $\sum_{i \ge 1} \sigma_i^{3/2} < \infty$.  But by assumption we have $\int_0^{\infty} B^{\lambda}(t) \mathrm{d} t = \infty$ a.s., and so $\sum_{\gamma \in \Gamma^{\lambda}} |\gamma|^{3/2} = \sum_{i \ge 1} \sigma_i^{3/2} = \infty$ a.s.
\end{proof}

The proof of Proposition~\ref{prop:excursionlengths} $(ii)$ is due to \'Eric Brunet.

\begin{proof}[Proof of Proposition~\ref{prop:excursionlengths} $(ii)$.] Recall that
\[
W^{\lambda}(t)=W(t)+\lambda t -t^2/2,
\]
$\underline{W}^{\lambda}(t)=\underset{0\leq s\leq t} \inf W^{\lambda}(s)$ and $B^{\lambda}(t)=W^{\lambda}(t)-\underline{W}^{\lambda}(t)$.
By Lemma~\ref{lem:sumintegral}, it is sufficient to show that $\int_0^{\infty} B^{\lambda}(t) \mathrm{d} t=\infty$ a.s.  We will construct a lower bound for $B^{\lambda}$, built on the same probability space, whose integral we can more easily show to be infinite.

Let $(Z(t),t\geq0)$ be  defined by $Z(0)=0$ and
\[
\mathrm d Z(t) = \mathrm d W(t) - \left(3t + \frac{Z(t)}{t}\right) \mathrm{d} t.
\]
Now define $\theta: \R_+ \to \R_+$ by $\theta(s) = (3s)^{1/3}$ and let
\[
Y(s)=\theta(s) Z(\theta(s)).
\]
Then some stochastic calculus gives that
\begin{align*}
\mathrm{d} Y(s) &= \theta'(s)Z(\theta(s)) +\theta(s)\left(\mathrm d W(\theta(s))-\left(3\theta(s)+\frac{Z(\theta(s))}{\theta(s)}\right)\right)\theta'(s)\mathrm d s+\mathrm d{\Big\langle \theta(\cdot),Z(\theta(\cdot))\Big\rangle}_s\\
 &=\theta(s) \mathrm{d} W(\theta(s)) - 3 \mathrm{d}s.
\end{align*}
Note that the first equality used Ito's Lemma and the second used $(\theta'(s))^2\theta(s)=1$ as well as $\mathrm d\big{\langle \theta(\cdot),Z(\theta(\cdot))\big\rangle}_s=0.$ Again since $(\theta'(s))^2\theta(s)=1$, by the Dubins--Schwarz theorem (\cite[Theorem 1.6]{RevuzYor}) we get that 
\[
\widetilde{W}(s) := \int_0^s \theta(r) \mathrm{d} W(\theta(r))
\]
is a standard Brownian motion and so $Y$ is a Brownian motion with drift $-3$:
\[
Y(s)=\widetilde{W}(s)-3s, \quad s \ge 0.
\]
As a consequence, there exists a random time $S_1\geq 0$ such that, for all $s\geq S_1,$ 
\[
-\frac{9}{2}s < Y(s) < 0.
\]
Letting $T_1=\theta(S_1),$ since $\theta^{-1}(t) = t^3/3$, we get 
\[
-\frac{3}{2} t^2<Z(t)<0
\]
for $t \geq T_1.$  In particular, for $t\geq T_1 \vee 2 |\lambda|$, we have the following bounds on the drift term in the SDE defining $Z$:
\[
-\left(3t + \frac{Z(t)}{t} \right) \le -\frac{3}{2} t \le \lambda - t.
\]
Using the fact that $Z$ and $W^{\lambda}$ are constructed from the same Brownian motion $W$, we get that after time $T_2 := T_1\vee 2|\lambda|$, $Z$ has smaller increments than $W^{\lambda}.$

Now choose a time $T_3>T_2$ large enough such that the minima of $Z$ and $W^{\lambda}$ on $[0,T_3]$ are both attained after $T_2.$ Then, for $t\geq T_3$, the minimum of $W^{\lambda}$ on $[0,t]$ is attained at some $u \in [T_2,t]$, and so
\[
B^{\lambda}(t)=W^{\lambda}(t)- \inf_{u \in [T_2,t]} W^{\lambda}(u) = \sup_{u \in [T_2,t]} (W^{\lambda}(t) - W^{\lambda}(u)) \geq \sup_{u \in [T_2,t]} (Z(t)-Z(u)) \geq Z(t)-\underline{Z}(t),
\]
where $\underline{Z}$ is the running infimum of $Z$. Moreover, using the fact that $Y(s)<0$ for $s\geq S_1,$ we get
\[
Z(t)-\underline{Z}(t)=\frac{1}{t}Y(t^3/3) - \underset{u \in[T_2,t]}\inf\frac{1}{u}Y(u^3/3) \geq \frac{1}{t}Y(t^3/3) - \frac{1}{t}\underset{u \in[T_2,t]}\inf Y(u^3/3)=\frac{1}{t}(Y(t^3/3)-\underline{Y}(t^3/3)),
\]
where $\underline{Y}$ is the running infimum of $Y$.  This yields $B^{\lambda}(t)\geq \frac{1}{t}(Y(t^3/3)-\underline{Y}(t^3/3))$ for $t\geq T_3.$ In particular,
\[
\int_0^{\infty} B^{\lambda}(t)\mathrm d t \geq \int_{T_3}^{\infty} \left(Y(t^3/3)-\underline{Y}(t^3/3) \right) \frac{\mathrm d t}{t}=\int_{\frac{1}{3}T_3^3}^{\infty}(Y(s) -\underline{Y}(s))\frac{\mathrm d s}{3s}.
\]

We now show that the final integral is infinite.  Observe that the reflected drifting Brownian motion $Y - \underline{Y}$ is a positive recurrent Markov process, its stationary distribution being exponential with parameter $6$ (see, for example, p.94 of Harrison~\cite{Harrison}). In particular, the sets  of times at which it is above $2$ and below $1$ are both unbounded, and we can define two sequences $(\tau_n)_{n \ge 0}$ and $(\eta_n)_{n \ge 1}$ of stopping times as follows.  Let $\tau_0 = \frac{1}{3}T_3^3$ and then, for $n \ge 1$, 
\begin{align*}
\eta_n & = \inf\{s > \tau_{n-1}: Y(s) - \underline{Y}(s) > 2\}, \\
\tau_n & = \inf\{s > \eta_n: Y(s) - \underline{Y}(s) < 1\}.
\end{align*}
On a downcrossing interval $[\eta_n, \tau_n]$, we trivially have $Y(s) - \underline{Y}(s) \ge 1$, so
\[\int_{\frac{1}{3}T_3^3}^{\infty}(Y(s) -\underline{Y}(s))\frac{\mathrm d s}{s}\geq \sum_{n=1}^{\infty}\frac{(\tau_n - \eta_n)}{\tau_{n}}.
\]
By the strong Markov property, we have that $\{\tau_n - \tau_{n-1}: n \ge 2\}$ are i.i.d., and their expectation is finite by the aforementioned positive recurrence of $Y-\underline{Y}.$ The law of large numbers then implies that, as $n \to \infty$,
\[
\frac{\tau_n}{n} \to \E[\tau_2 - \tau_1] < \infty \quad \text{a.s.}
\]
We also have that $\{\tau_n - \eta_n: n \ge 2\}$ are i.i.d.\ and so $\sum_{n \ge 1} (\tau_n - \eta_n)/n$ diverges a.s.  It follows that
\[
\sum_{n \ge 1} \frac{(\tau_n - \eta_n)}{\tau_n} = \infty \quad \text{a.s.} \qedhere
\]
\end{proof}

\subsection{Bounds for a single tree} \label{subsec:boundssingletree}
Let $\sigma>0.$ We now let $f=2\tilde{\exc}^{(\sigma)}$ be twice a tilted Brownian excursion with length $\sigma$, whose distribution is determined by
\[
\E[g(\tilde{\exc}^{(\sigma)})]=\frac{\E\left[g\big(\sqrt{\sigma}\exc(\cdot/\sigma)\big) \exp \left(\sigma^{3/2}\int_0^1\exc(x)dx \right) \right]}{\E\big[\exp \left( \sigma^{3/2}\int_0^1\exc(x)dx \right)\big]},
\]
for any non-negative measurable function $g$, where $\exc$ is a standard Brownian excursion. The choice is guided by the fact mentioned in the introduction, that these functions (including the factor 2) encode the limiting continuum random trees in which we are interested. We perform the construction detailed in Section~\ref{sec:continuum}, defining the $\R$-tree $\T_{\sigma}$ (we now replace the subscript $f$ by $\sigma$ since henceforth all of our coding functions will be of this type), performing $N_{\sigma}$ identifications, $N_{\sigma}^a$ of them being ancestral and $N_{\sigma}^b$ being non-ancestral, and thus build the MDM $\mathcal{M}_{\sigma}.$ The following proposition will enable us to control the number of strongly connected components of $\mathcal{M}_{\sigma}.$

\begin{prop}\label{prop:Poissonbounds} Let $c=\E[\int_0^1 \mathbf{e}(t)\mathrm dt]=F'(0)$ where $F(z)=\E[e^{z\int_0^1 \mathbf{e}(t)\mathrm d t}]$ is the moment generating function of the Airy distribution, which is an entire function \cite{janson2007}. We have the following asymptotics: as $\sigma \to 0$, 
\begin{itemize}
\item[(i)] $\pr[N_{\sigma}^a=0]= 1 - 2c\sigma^{3/2} + O(\sigma^3)$
\item[(ii)] $\pr[N_{\sigma}^a=1,N_{\sigma}^b=0]= 2c\sigma^{3/2} + O(\sigma^3)$
\item[(iii)] $\pr[N_{\sigma}^a\geq 2\text{ or } N_{\sigma}^b\geq 1]= O(\sigma^3).$
\end{itemize}
Moreover,
\begin{itemize}
\item[(iv)] $\underset{\sigma>0}\sup \; \sigma^{-3}\E[N_{\sigma}^a\mathbf{1}_{\{N_{\sigma}^a\geq 2\}}] <\infty.$
\end{itemize}
\end{prop}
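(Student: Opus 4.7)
The proof proceeds from Remark~\ref{rem:otherconstruction}: conditionally on $f$, the variable $N_\sigma^a$ is Poisson with random parameter $S := \int_0^\sigma f(t)\,\mathrm{d}t$, and the conditional probability of $\{N_\sigma^b = 0\}$ given $\{N_\sigma^a = 1\}$ and $s_1^a$ is written explicitly there. Using the scaling $\exc^{(\sigma)}(t) = \sqrt{\sigma}\,\exc(t/\sigma)$ together with the tilt definition, we have $S = 2\sigma^{3/2} A_\sigma$, where $A_\sigma$ is distributed as $A = \int_0^1 \exc$ biased by $e^{\sigma^{3/2} A}$. Consequently, for any non-negative measurable $g$,
\[
\E[g(S)] = \frac{\E\bigl[g(2\sigma^{3/2} A)\,e^{\sigma^{3/2} A}\bigr]}{F(\sigma^{3/2})},
\]
which is the main computational tool. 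In particular, $\E[S] = 2\sigma^{3/2} F'(\sigma^{3/2})/F(\sigma^{3/2})$ and $\E[S^2] = 4\sigma^3 F''(\sigma^{3/2})/F(\sigma^{3/2})$; since $F$ is entire with $F(0)=1$ and $F'(0)=c$, Taylor expansion yields $\E[S] = 2c\sigma^{3/2} + O(\sigma^3)$ and $\E[S^2] = O(\sigma^3)$ as $\sigma \to 0$.

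Part (i) is then immediate: $\pr[N_\sigma^a = 0] = \E[e^{-S}] = F(-\sigma^{3/2})/F(\sigma^{3/2}) = 1 - 2c\sigma^{3/2} + O(\sigma^3)$. For (ii), I would condition on $f$ and then on the location $s_1^a$, which given $\{N_\sigma^a = 1\}$ has conditional density $f(\cdot)/S$ on $[0,\sigma]$, to obtain
\[
\pr[N_\sigma^a = 1,\, N_\sigma^b = 0] = \E\left[\int_0^\sigma f(s)\,e^{-S}\exp\left(-\int_s^\sigma (f(t) - \hat f(s,t))\,\mathrm{d}t\right)\mathrm{d}s\right].
\]
Since $|e^{-x} - 1| \leq x$ for $x \geq 0$, both exponential factors differ from $1$ by at most $S$, so the right-hand side equals $\E[S] + O(\E[S^2]) = 2c\sigma^{3/2} + O(\sigma^3)$. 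Part (iii) follows by subtracting (i) and (ii) from $1$.

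Part (iv) is the delicate one. The Poisson factorial-moment identity gives $\E[N_\sigma^a\,\mathbf{1}_{\{N_\sigma^a \geq 2\}}] = \E[S(1 - e^{-S})]$, which I bound using $S(1-e^{-S}) \leq \min(S, S^2)$, and split between $\sigma \leq 1$ and $\sigma > 1$. For $\sigma \leq 1$, bounding by $S^2$, continuity of $F''/F$ on $[0,1]$ yields $\E[S^2] \leq C_1 \sigma^3$. For $\sigma > 1$, bounding by $S$, we need $\E[S] \leq C_2\sigma^3$, i.e.\ $F'(z)/F(z) = O(z)$ as $z \to \infty$. This follows from the Gaussian-type tail $\pr[A > x] \leq Ce^{-2x^2}$ (inherited via $A \leq \sup_{[0,1]}\exc$ from Kennedy's formula), which gives $\log F(z) = O(z^2)$; combined with convexity of $\log F$ (so that $(\log F)'(z) \leq (\log F(2z) - \log F(z))/z$), this yields the desired bound on its derivative. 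This large-$\sigma$ step is the main obstacle; parts (i)--(iii) reduce to straightforward Taylor expansions of $F$ around $0$.
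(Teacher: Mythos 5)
Your parts (i)--(iii) are essentially the paper's argument: you reduce everything to the Airy moment generating function $F$ via the tilt and scaling (so $S=2\sigma^{3/2}A$ biased by $e^{\sigma^{3/2}A}$), use the formula from Remark~\ref{rem:otherconstruction} for $\pr[N^b_\sigma=0\mid N^a_\sigma=1,s_1^a]$, and control the exponential corrections by $|1-e^{-u}|\le u$; the only cosmetic difference is that you compare $\pr[N^a_\sigma=1,N^b_\sigma=0]$ to $\E[S]$ and invoke $\E[S^2]=4\sigma^3F''(\sigma^{3/2})/F(\sigma^{3/2})=O(\sigma^3)$, where the paper compares to $\pr[N^a_\sigma=1]$ and bounds the gap by $4\sigma^3\E[\int_0^1\exc^2]$ --- both are correct, and (iii) follows in both cases from the (implicit) fact that the first identification is always ancestral, so $\{N^a_\sigma\ge 2 \text{ or } N^b_\sigma\ge 1\}$ is the complement of $\{N^a_\sigma=0\}\cup\{N^a_\sigma=1,N^b_\sigma=0\}$. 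Where you genuinely diverge is the uniform-in-$\sigma$ bound in (iv). The paper also reduces to showing $F'(z)=O(zF(z))$ as $z\to\infty$, but proves it by quoting Janson's asymptotics for the Taylor coefficients $a_n$ of $F$ and checking that $(n+1)a_{n+1}/a_{n-1}$ stays bounded. You instead use $\E[N^a_\sigma\mathbf 1_{\{N^a_\sigma\ge2\}}]=\E[S(1-e^{-S})]\le\min(\E[S],\E[S^2])$, and for large $z$ derive $F'(z)/F(z)=O(z)$ from two soft facts: a sub-Gaussian tail for $A$ (via $A\le\sup\exc$ and Kennedy's formula), giving $\log F(z)=O(z^2)$, and convexity of the cumulant generating function, giving $(\log F)'(z)\le(\log F(2z)-\log F(z))/z$ together with $\log F(z)\ge0$. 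This is correct (the precise constant $2$ in your claimed tail $Ce^{-2x^2}$ is not quite what Kennedy's formula gives because of the polynomial prefactor, but any sub-Gaussian bound suffices) and is arguably more self-contained and robust than the paper's route, since it needs only a tail bound rather than sharp coefficient asymptotics; the paper's computation, on the other hand, yields the exact expression $2\sigma^{3/2}(F'(\sigma^{3/2})-F'(-\sigma^{3/2}))/F(\sigma^{3/2})$ for the quantity in (iv), which is slightly more precise information at small $\sigma$.
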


\begin{proof}
Instead of working with $\tilde{\exc}^{(\sigma)},$ we express the probabilities in terms of a standard Brownian excursion $\mathbf{e}$ and its area $\mathcal{A}=\int_0^1 \mathbf{e}(t)\mathrm dt.$ For $(i)$, recall that, conditionally on $\tilde{\exc}^{(\sigma)},$ we have that $N_{\sigma}^a$ has a Poisson distribution with parameter $\int_0^{\sigma} 2\tilde{\exc}^{(\sigma)}(x)\mathrm dx$.  Therefore,
\[
\pr[N_{\sigma}^a=0]=\frac{\E\left[e^{-\sigma^{3/2}\int_0^1 2 \mathbf{e}(t)\mathrm dt} e^{\sigma^{3/2}\int_0^1  \mathbf{e}(t)\mathrm dt}\right]}{\E\left[e^{\sigma^{3/2}\int_0^1  \mathbf{e}(t)\mathrm dt}\right]} 
	=\frac{F(-\sigma^{3/2})}{F(\sigma^{3/2})}=1 - 2c\sigma^{3/2} + O(\sigma^3). \]

We begin the proofs of $(ii)$ and $(iii)$ by computing
\[
\pr[N_{\sigma}^a=1]=\frac{\E\left[2\sigma^{3/2}\mathcal{A}e^{-\sigma^{3/2}\mathcal{A}}\right]}{\E\left[e^{\sigma^{3/2}\mathcal{A}}\right]}=\frac{2\sigma^{3/2}F'(-\sigma^{3/2})}{F(\sigma^{3/2})}=\frac{2\sigma^{3/2}(c+O(\sigma^{3/2}))}{1+O(\sigma^{3/2})}=2c\sigma^{3/2} + O(\sigma^3).
\]
Next, recalling Remark~\ref{rem:otherconstruction}, we write
\begin{align*}
\pr[N_{\sigma}^a=1,N_{\sigma}^b=0]&= \E\left[e^{-2\int_0^{\sigma}\tilde{\exc}^{(\sigma)}(t)\mathrm dt}\int_{0}^{\sigma}2\tilde{\exc}^{(\sigma)}(x) \exp\left(-\int_x^{1}2\left(\tilde{\exc}^{(\sigma)}(y)-\hat{\tilde{\exc}}^{(\sigma)}(x,y)\right)\mathrm dy\right)\mathrm dx \right] \\
                                  &=\frac{1}{{\E[e^{\sigma^{3/2}\mathcal{A}}]}}\E\left[e^{-\sigma^{3/2}\mathcal{A}}\sigma^{3/2}\int_0^1 2\mathbf{e}(x) \exp \left(-\sigma^{3/2}\int_{x}^1 2\left(\mathbf{e}(y)-\hat{\mathbf{e}}(x,y)\right)\mathrm dy\right) \mathrm d x \right].
\end{align*}
Using $|1-e^{-u}|\leq u$ for $u\geq 0,$ we obtain
\begin{align*}
\pr[N_{\sigma}^a=1]-\pr[N_{\sigma}^a=1,N_{\sigma}^b=0]&\leq 4\sigma^3\E\left[e^{-\sigma^{3/2}\mathcal{A}}\int_0^1 \mathbf{e}(x)\mathrm dx\int_x^1(\mathbf{e}(x)-\hat{\mathbf{e}}(x,y))\mathrm d y \right] \\
                              &\leq 4\sigma^{3}\E\left[\int_0^1 (\mathbf{e}(x))^2 \mathrm d x\right].
\end{align*}
Now note that $\int_0^1 (\mathbf{e}(x))^2 \mathrm d x$ has finite expectation because it is smaller than $(\sup \exc)^2,$ which is indeed integrable ($\sup \exc$ has sub-Gaussian tails, see \cite{kennedy76}). So the above quantity is $O(\sigma^3)$.  This finishes the proof of $(ii)$ and $(iii)$.

Finally, since $N_{\sigma}^a$ is integer-valued, we have
\begin{align*}
\E \left[N_{\sigma}^a\mathbf{1}_{\{N_{\sigma}^a\geq 2\}} \right]&=\E[N_{\sigma}^a]-\pr[N_{\sigma}^a=1] \\
                                  &=\frac{\E[2\sigma^{3/2}\mathcal{A}e^{\sigma^{3/2}\mathcal{A}}]}{\E[e^{\sigma^{3/2}\mathcal{A}}]} -\pr[N_{\sigma}^a=1] \\
                                 &= \frac{2\sigma^{3/2}\left(F'(\sigma^{3/2})-F'(-\sigma^{3/2})\right)}{F(\sigma^{3/2})}.
\end{align*}
This proves that $\E[N_{\sigma}^a\mathbf{1}_{\{N_{\sigma}^a\geq 2\}}]=O(\sigma^3)$ as $\sigma \to 0$, but we also want the bound as $\sigma$ tends to infinity. To this end, we write
\[\E[N_{\sigma}^a\mathbf{1}_{\{N_{\sigma}^a\geq 2\}}]\leq 2\sigma^{3/2}\frac{F'(\sigma^{3/2})}{F(\sigma^{3/2})}\]
and simply aim to prove that $F'(x)=O(xF(x))$ as $x\to \infty.$ Quoting \cite[Section 7]{janson2007}, we have
\[F(x)=\sum_{n=0}^{\infty} a_nx^n,\]
where 
\[a_n \underset{n\to\infty}\sim \frac{3\sqrt{2}}{(n-1)!}\left(\frac{n}{12 e}\right)^{n/2}.\]
The desired domination will follow from the fact that $\frac{(n+1)a_{n+1}}{a_{n-1}}$ (the ratio of the coefficients of $x^n$ in $F'(x)$ and $xF(x)\,$) is uniformly bounded for $n\geq 1,$ which is true, since the sequence in fact converges:
\begin{align*}
\frac{(n+1)a_{n+1}}{a_{n-1}} &\sim \frac{n+1}{n(n-1)}\frac{1}{12e}\frac{(n+1)^{(n+1)/2}}{(n-1)^{(n-1)/2}}   \sim \frac{1}{12e} \frac{(n+1)^{n/2}}{(n-1)^{n/2}} \to \frac{1}{12}.
\end{align*}
This completes the proof.
\end{proof}

\subsection{Some properties of the scaling limit}

Let $(\sigma_1,\sigma_2,\ldots)$ be the lengths of the excursions of $B^{\lambda}$, listed in decreasing order. For each $i\geq 1$, let $\mathcal{D}_i$ be an independent copy of $\mathcal{M}_{\sigma_i}$ and let $\mathcal{D}=\bigcup_{i=1}^{\infty} \mathcal{D}_i$. We think of $\mathcal{D}$ as a countable MDM.

\begin{thm} 
\begin{itemize}
\item[(i)] The number of complex connected components of $\mathcal{D}$ has finite expectation.
\item[(ii)] The number of loops of $\mathcal{D}$ is a.s.\ infinite.
\end{itemize}
\end{thm}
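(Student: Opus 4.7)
The plan is to handle the two parts separately, using Proposition~\ref{prop:Poissonbounds} to control the per-excursion contributions and Proposition~\ref{prop:excursionlengths} to sum over excursions.

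For part (i), let $K(\sigma)$ denote the number of complex strongly connected components of $\mathcal{M}_\sigma$. The key structural observation is the continuum analogue of Proposition~\ref{prop:starcomponents}: every strongly connected component of $\mathcal{M}_\sigma$ must contain at least one ancestral identification, because the tree edges of $\T_f^{\mathrm{mk}}$ are all oriented away from the root, so any directed cycle must close at its highest point (the one closest to $\rho$) via a back step to an ancestor. In particular, the total number of strongly connected components is at most $N_\sigma^a$. Moreover, when $N_\sigma^a = 1$ and $N_\sigma^b = 0$ the unique strongly connected component is a single loop (the cycle formed by the lone ancestral identification together with the segment joining $y_1$ to $x_1$), hence not complex. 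Combining these observations yields
\[
K(\sigma) \ \leq \ N_\sigma^a \mathbf{1}_{\{N_\sigma^a \geq 2\}} + \mathbf{1}_{\{N_\sigma^a \geq 1,\, N_\sigma^b \geq 1\}}.
\]
Proposition~\ref{prop:Poissonbounds}(iii) and (iv) together give $\E[K(\sigma)] \leq C \sigma^3$ uniformly in $\sigma > 0$, and summing over $i$ using Proposition~\ref{prop:excursionlengths}(i) with $\alpha = 3$ yields
\[
\E\left[\sum_{i \ge 1} K(\sigma_i)\right] \ \leq \ C\, \E\!\left[\sum_{i \ge 1} \sigma_i^3\right] \ < \ \infty.
\]

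For part (ii), I would apply a conditional second Borel--Cantelli argument to the events $\{\mathcal{D}_i \text{ contains a loop}\}$. Conditionally on the excursion lengths $(\sigma_j, j \ge 1)$, the MDMs $\mathcal{D}_i$ are independent across $i$. By Proposition~\ref{prop:Poissonbounds}(ii), there exist constants $\sigma_0, c' > 0$ such that
\[
\pr\bigl[\mathcal{D}_i \text{ contains a loop} \,\big|\, \sigma_i\bigr] \ \geq \ \pr\bigl[N_{\sigma_i}^a = 1,\, N_{\sigma_i}^b = 0 \,\big|\, \sigma_i\bigr] \ \geq \ c'\, \sigma_i^{3/2}
\]
whenever $\sigma_i \leq \sigma_0$. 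To check that the resulting conditional series diverges, I would split
\[
\sum_{i : \sigma_i > \sigma_0} \sigma_i^{3/2} \ \leq \ \sigma_0^{-1/2} \sum_{i \ge 1} \sigma_i^2 \ < \ \infty \quad \text{a.s.}
\]
by Proposition~\ref{prop:excursionlengths}(i) with $\alpha = 2$, whereas $\sum_{i \ge 1} \sigma_i^{3/2} = \infty$ a.s.\ by Proposition~\ref{prop:excursionlengths}(ii). Hence $\sum_{i : \sigma_i \leq \sigma_0} \sigma_i^{3/2} = \infty$ a.s., and the conditional second Borel--Cantelli lemma forces infinitely many $i$ for which $\mathcal{D}_i$ contains a loop.

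The main obstacle is the structural bound on $K(\sigma)$ in part (i), specifically the continuum version of the claim that every strongly connected component of $\mathcal{M}_\sigma$ carries at least one ancestral identification. This is morally identical to Proposition~\ref{prop:starcomponents}, but a short argument is needed to transfer it to the $\R$-tree setting: along any directed cycle in $\mathcal{M}_\sigma$, the point closest to $\rho$ is the head of an identification whose tail, being reached by following tree edges downward, is necessarily a descendant, so that identification is ancestral. Once this is established the rest of the argument is short, and the $\sigma^3$ gain in Proposition~\ref{prop:Poissonbounds} is exactly what is needed to match the summability provided by Proposition~\ref{prop:excursionlengths}(i).
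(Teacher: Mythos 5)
Your argument is correct, and for part (i) it is essentially the paper's proof: bound the number of complex components of $\mathcal{M}_{\sigma_i}$ by a function of $N_{\sigma_i}^a$ and $N_{\sigma_i}^b$ whose conditional expectation is $O(\sigma_i^3)$ (Proposition~\ref{prop:Poissonbounds}), then sum using $\E[\sum_i \sigma_i^3]<\infty$ (Proposition~\ref{prop:excursionlengths}(i)). For part (ii) you take a slightly different, and arguably cleaner, route: the paper reduces to showing $\sum_i N_{\sigma_i}^a=\infty$ a.s.\ and then invokes part (i) (finitely many complex components) to turn infinitely many ancestral identifications into infinitely many loops, whereas you apply the conditional second Borel--Cantelli lemma directly to the events $\{N_{\sigma_i}^a=1,\,N_{\sigma_i}^b=0\}$, each of which forces $\mathcal{D}_i$ to contain a loop, so part (i) is not needed; your splitting at a threshold $\sigma_0$, discarding the large excursions via the $\alpha=2$ moment, also deals explicitly with the fact that Proposition~\ref{prop:Poissonbounds}(ii) is only an asymptotic as $\sigma\to 0$, which the paper leaves implicit. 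Two minor soft spots, neither fatal: first, the claimed uniform bound $\E[K(\sigma)]\le C\sigma^3$ does not follow from Proposition~\ref{prop:Poissonbounds}(iii) alone, since (iii) is an asymptotic as $\sigma\to 0$ (only (iv) is uniform); you need the trivial remark that the indicator term is at most $1\le \sigma_0^{-3}\sigma^3$ for $\sigma\ge\sigma_0$. Second, your one-line justification that every strongly connected component of $\mathcal{M}_\sigma$ contains an ancestral identification is too quick: the portion of the cycle from the head of that identification back to its tail may pass through further identifications, so ``reached by following tree edges downward'' is not automatic. The correct argument (the continuum analogue of Proposition~\ref{prop:starcomponents}, which the paper also asserts without detail) takes the point of the cycle minimal in the \emph{planar} order, observes it must be entered via an identification, and uses the interval property of descendants in the planar order to see that the whole cycle, and hence the tail, stays among its descendants.
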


\begin{proof}
We start with part $(i).$ For each $i \ge 1$, let $K_i$ be the number of complex components in $\mathcal{D}_i.$ Each complex component contains at least one ancestral identification and so $K_i\leq N_{\sigma_i}^a$. Furthermore, if there is exactly one ancestral identification, there must also be at least one which is non-ancestral in order to obtain a complex component, so that $\pr[K_i=1 \, | \,  \sigma_i]\leq  \pr[N_{\sigma_i}^a=1,N_{\sigma_i}^b\geq 1 \, | \,  \sigma_i]+\pr[N_{\sigma_i}^a\geq 2 \, | \,  \sigma_i]$. Hence, by $(iii)$ and $(iv)$ from Proposition~\ref{prop:Poissonbounds},
\begin{align*}
\E[K_i \, | \, \sigma_i]&= \pr[K_i=1 \, | \, \sigma_i] + \E[K_i\mathbf{1}_{K_i\geq 2} \, | \, \sigma_i] \\
                     &\leq \pr[N_{\sigma_i}^a=1,N_{\sigma_i}^b\geq 1 \, |\, \sigma_i]+\pr[N_{\sigma_i}^a\geq 2 \, | \, \sigma_i] + \E[N_{\sigma_i}^a\mathbf{1}_{\{N_{\sigma_i}^a\geq 2\}}\mid \sigma_i] \\
                     & \leq C\sigma_i^3 
\end{align*}
for some $C>0.$ Thus,
\[
\E\left[\sum_{i=1}^{\infty} K_i\right] \leq C\E\left[\sum_{i=1}^{\infty} \sigma_i^3\right] < \infty. 
\]                

For part $(ii),$ notice first that, since we now know that $\mathcal{D}$ has finitely many complex components a.s., it is sufficient to show that there are infinitely many ancestral identifications, i.e.\ that $\sum_{i=1}^{\infty} N_{\sigma_i}^a=\infty$ a.s. But since $\pr[N_{\sigma_i}^a \geq 1 | (\sigma_j,j\in\N)]$ is asymptotically equivalent to $2c\sigma_i^{3/2}$ by Proposition~\ref{prop:Poissonbounds}, and Proposition~\ref{prop:excursionlengths} gives $\sum_i \sigma_i^{3/2}=\infty$ a.s., the claim follows from an application of the Borel-Cantelli lemma.
\end{proof}

The following property of $\mathcal{D}$ is not surprising, but nonetheless requires proof.
\begin{prop}\label{prop:differentlengths}
The strongly connected components of $\mathcal{D}$ all have different lengths a.s.
\end{prop}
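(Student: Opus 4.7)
The plan is to show that for any two distinct strongly connected components $C,C'$ appearing in $\mathcal{D}$, one has $\pr[\mathrm{len}(C)=\mathrm{len}(C')]=0$, and then conclude by a union bound, which is valid since each $\mathcal{M}_{\sigma_i}$ is finite and hence $\mathcal{D}$ contains only countably many strongly connected components. The key idea is to exploit the continuous uniform distribution of the heights $h_k$ (distances from the root of $\T_{\sigma_i}$ to the points $y_k$) from Section~\ref{sec:continuousbackedges}. We condition on the excursion lengths $(\sigma_\ell)$, the tilted excursions themselves, the Poisson times $(s_k)$ in each excursion, and the \emph{combinatorial structure} of each $\mathcal{M}_{\sigma_\ell}$---by which we mean which sub-branch of the subtree $\T_{k-1}$ each $y_k$ lands on. Under this conditioning, the heights $h_k$ are independent uniform random variables on non-degenerate intervals, and the length of each strongly connected component is an affine function of them.

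The central combinatorial claim is: for each component $C$ in some $\mathcal{D}_i$, there exists an index $m=m(C)$ (measurable with respect to the conditioning data) such that $(x_m,y_m)\in C$ and $\partial\,\mathrm{len}(C)/\partial h_m = -1$. To establish it, consider the vertex $v^*$ of $C$, viewed as a subgraph of $\mathcal{M}_{\sigma_i}$, that lies closest to the root of $\T_{\sigma_i}$; this is a.s.\ unique since the heights of the various ``special points'' ($y_k$'s, $x_k$'s, branch points and root) are a.s.\ all distinct. The tree segment immediately above $v^*$ cannot lie in $C$, for otherwise its other endpoint would be a vertex of $C$ closer to the root. Yet $v^*$ must have positive in-degree in $C$, and its only in-edges in $\mathcal{M}_{\sigma_i}$ apart from this upward segment are identification arrows $x_m\to y_m=v^*$, so $v^*=y_m$ for some $m$ with $(x_m,y_m)\in C$. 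Likewise, $v^*$ has positive out-degree in $C$, whose only possible source is the downward tree segment at $v^*$; hence this segment belongs to $C$. Moving $y_m$ downward by $\delta$ then lengthens the upward segment (not in $C$) by $\delta$ and shortens the downward segment (in $C$) by $\delta$, giving $\partial\,\mathrm{len}(C)/\partial h_m=-1$.

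The argument concludes by splitting into two cases. If $C\in\mathcal{D}_i$ and $C'\in\mathcal{D}_j$ with $i\ne j$, then conditionally on the data above $\mathrm{len}(C)$ and $\mathrm{len}(C')$ are independent, and each is an affine function of a uniform variable with non-zero slope, hence has a continuous conditional distribution; so $\pr[\mathrm{len}(C)=\mathrm{len}(C')]=0$. If instead $C,C'\in\mathcal{D}_i$, set $m=m(C)$. Since distinct strongly connected components of $\mathcal{M}_{\sigma_i}$ are vertex-disjoint, $y_m\notin C'$, so no edge of $\mathcal{M}_{\sigma_i}$ incident to $y_m$ can lie in $C'$, which yields $\partial\,\mathrm{len}(C')/\partial h_m=0$. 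Conditionally on the previous data and on $(h_k)_{k\ne m}$, the difference $\mathrm{len}(C)-\mathrm{len}(C')$ is thus a non-constant affine function of the uniform variable $h_m$, so its conditional probability of vanishing is zero, and taking expectation gives the claim. The main obstacle is the combinatorial identification of the top vertex $v^*$ as some $y_m$ with the prescribed segment-in-$C$ configuration; this relies on the a.s.\ non-coincidence of the various random special points of $\T_{\sigma_i}$.
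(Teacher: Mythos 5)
Your proof is correct and rests on essentially the same mechanism as the paper's: conditionally on everything else, the position of the topmost identified head of a strongly connected component is uniform on a non-degenerate segment, so the component's length (and hence any difference of lengths) has a continuous conditional distribution, and one concludes by independence across trees and a countable union bound. The paper packages this via a Dirichlet split of the trunk $\llbracket \rho_k,u_k\rrbracket$ (and the events $\{\rho_k\notin \mathcal{C}^{(\sigma)}_l\}$ with a symmetrisation for the same-tree case), whereas you condition on the combinatorial structure and differentiate in the pivot height $h_m$, using vertex-disjointness of distinct components to make the other length insensitive to $h_m$ --- the same idea with slightly different bookkeeping (your minor imprecisions, e.g.\ the heads sharing an edge are not quite independent given the structure and $m(C)$ should be chosen as, say, the smallest structurally admissible index, are easily repaired and do not affect the argument).
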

This follows straightforwardly from the following lemma, in which we work on a single tree.
\begin{lem} Let $\sigma>0$. 
\begin{itemize}
\item[$(i)$] For all $x>0$, $\pr[\mathcal{M}_{\sigma} \text{ has a strongly connected component of length } x]=0$.
\item[$(ii)$] $\pr[\mathcal{M}_{\sigma} \text{ has two strongly connected components with equal lengths}]=0$.
\end{itemize}
\end{lem}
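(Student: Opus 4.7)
The plan is to condition on $\tilde{\exc}^{(\sigma)}$ and on the Poisson arrival times $(s_i)_{i\le N}$, so that the tree $\T_\sigma$, the marked points $(x_i)$, and the subtree $\T_\sigma^{\mathrm{mk}}$ are all determined; the residual randomness then consists of the positions of the $(y_i)$, which are conditionally independent with $y_i$ uniform on the finite-length subtree $\T_{i-1}$. Almost surely, no $y_i$ coincides with a branchpoint, with an $x_j$, or with another $y_j$, so I may further condition on the discrete data encoding which edge of $\T_\sigma^{\mathrm{mk}}$ each $y_i$ lies on, and the relative ordering of any $y_i$'s sharing an edge. On each such conditioning event the combinatorial structure of $\mathcal{M}_\sigma$---and hence its decomposition into strongly connected components---is deterministic, and each edge length of $\mathcal{M}_\sigma$ (and so the length $L(C)$ of any strongly connected component $C$) is an affine function of the positions of the $y_i$'s within their edges.

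The key step is to show that for every such $C$ there is an identified vertex $v_* = y_{i_*} = x_{i_*}$ of $C$ with $\partial L(C)/\partial y_{i_*} = \pm 1$. Since every tree edge of $\mathcal{M}_\sigma$ is directed away from the root, any directed cycle passes through at least one identified vertex, so $C$ contains at least one; I choose $v_*$ to be the identified vertex of $C$ of smallest tree-height (well defined a.s.). At $v_*$ the three incident tree edges in $\mathcal{M}_\sigma$ are a.s.\ distinct: the segment (a) entering $y_{i_*}$ from above along its edge of $\T_\sigma^{\mathrm{mk}}$, the segment (b) leaving $y_{i_*}$ downward, and the segment (c) entering $x_{i_*}$ from its parent, the latter lying on a different edge of $\T_\sigma^{\mathrm{mk}}$ because $y_{i_*}$ is drawn from $\T_{i_*-1}$, which does not contain the branch leading to $x_{i_*}$. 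Moving $y_{i_*}$ upward by $\varepsilon$ changes the lengths of (a), (b), (c) by $-\varepsilon$, $+\varepsilon$, $0$, respectively. Now $v_*$ has out-degree at least $1$ in $C$, so (b) lies in $C$; and (a) does \emph{not} lie in $C$, for otherwise $C$ would contain a directed path from $v_*$ back up to the upper endpoint of (a), and such a path can only ascend in $\T_\sigma$ by passing through an identification point, forcing $C$ to contain an identified vertex strictly above $v_*$, contrary to the choice of $v_*$. Hence $\partial L(C)/\partial y_{i_*} = +1$.

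Both assertions follow easily. For $(i)$, $L(C)$ is conditionally an affine function of the uniform variable $y_{i_*}$ with slope $\pm 1$, hence has an absolutely continuous conditional law, and $\pr[L(C)=x]=0$ for any fixed $x>0$; one then sums over the a.s.\ finite number of strongly connected components. For $(ii)$, if $C_1 \ne C_2$ their topmost identified vertices $v_{*,1}, v_{*,2}$ are distinct (strongly connected components being vertex-disjoint), and $L(C_2)$ does not depend on the position of $y_{i_{*,1}}$, since no edge of $\mathcal{M}_\sigma$ incident to $v_{*,1}$ belongs to $C_2$; thus $L(C_1)-L(C_2)$ is a non-trivial affine function of $y_{i_{*,1}}$ and has a continuous conditional law, yielding $\pr[L(C_1)=L(C_2)]=0$ and then, by a union bound over pairs, the result. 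The principal obstacle is the topological sub-claim that edge (a) is excluded from $C$; once it is established, the rest is routine continuity of affine images of uniform laws.
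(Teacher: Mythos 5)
Your proposal is correct in substance and, at bottom, exploits the same mechanism as the paper's proof: conditionally on everything else, the length of a strongly connected component is a non-degenerate affine function of the position of a single uniformly placed head, hence has a conditional density and no atoms. The packaging differs, though. The paper works with the subtree $\T_k$ spanned by the leaves feeding the $k$-th component, writes $\mathrm{len}(\T_k)-\mathrm{len}(\mathcal{C}^{(\sigma)}_k)=(f(u_k)-f(\rho_k))\Delta^k_1$ with $\Delta^k_1$ the first coordinate of the Dirichlet split of $\llbracket\rho_k,u_k\rrbracket$ by the heads lying on it, and for part $(ii)$ conditions on the event $A_l=\{\rho_l\notin\mathcal{C}^{(\sigma)}_k\}$ (using $\pr[A_k\cup A_l]=1$); you instead condition on the full combinatorial placement of the heads and perturb the position of the lowest identified vertex of the component. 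Your decoupling of $L(C_1)$ from $L(C_2)$ in $(ii)$ via vertex-disjointness of the components (no edge incident to $v_{*,1}$ can lie in $C_2$) is arguably cleaner than the paper's symmetrisation over $A_k$ and $A_l$.

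Two points need repair, neither fatal. First, your justification that segment (c) lies on a different edge of $\T_\sigma^{\mathrm{mk}}$ rests on a false description of the construction: the head $y_i$ is uniform on $\T_{i-1}(s_i)=\T_{i-1}\cup\llbracket\rho,x_i\rrbracket$, which \emph{does} contain the path to $x_i$; ancestral identifications ($y_i$ an ancestor of $x_i$) are not only possible but are the dominant case, and the first identification is always ancestral. When $y_{i_*}$ lies on the edge of $\T_\sigma^{\mathrm{mk}}$ incident to the leaf $x_{i_*}$, your segments (b) and (c) coincide and form a self-loop at $v_*$ (e.g.\ when $N=1$). Fortunately this does not damage the computation: the slope of $L(C)$ in the position of $y_{i_*}$ is still $+1$ provided (b) lies in $C$ and (a) does not, and (a) and (c) can never coincide. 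Second, the claim that (a) does not lie in $C$ — which you rightly flag as the crux — needs an actual argument, and ``tree-height of an identified vertex'' must be specified as the height of the head $y_i$. One way: every edge of $M_f$ is a tree segment directed away from the root, so the tracked position in $\T_\sigma$ strictly gains height along each edge, and it can only drop at an identified vertex entered through the edge into $x_j$ and left through the edge out of $y_j$. If (a) were in $C$, there would be a directed path in $C$ from $v_*$ to the upper endpoint $w$ of (a); considering the \emph{last} drop along this path yields an identified vertex of $C$ whose head has height strictly less than that of $w$, hence strictly less than that of $y_{i_*}$, contradicting the minimality defining $v_*$. With these repairs, and the routine observations you already make (finitely many components since $N<\infty$ a.s., conditional densities of the head positions), the proof is complete.
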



\begin{proof} Let $f = 2\tilde{\exc}^{(\sigma)}$ be the excursion function encoding the tree $\T_{\sigma}$ from which $\mathcal{M}_{\sigma}$ is obtained, let the selected leaves be $(x_i,i\in \{1,\ldots,N\}),$ and let $\mathcal{C}^{(\sigma)}_1,\ldots,\mathcal{C}^{(\sigma)}_K$ be the  strongly connected components of $\mathcal{M}_{\sigma}$, listed in the order of appearance of their first elements in the planar ordering of $\T_{\sigma}$. For each $k\in\N,$ on the event where $k\leq K,$ let $E_k=\{i\in\{1,\ldots,N\}:\; x_i\in \mathcal{C}^{(\sigma)}_k\}$ be the set of indices of the leaves implicated in the construction of the $k$th strongly connected component, let $u_k$  be the most recent common ancestor of those leaves. Let $\rho_k = \sup\, \{x\in \llbracket \rho,u_k\rrbracket: x\text{ is a branchpoint and }x\notin \mathcal{C}^{(\sigma)}_k\},$  $\T_k= \bigcup_{i\in E_k} \llbracket \rho_k, x_i\rrbracket,$  and finally let $n_k= \#\{i\in E_k, y_i \in \llbracket\rho_k,u_k\rrbracket\}$, be the number of heads along the line-segment separating $\rho_k$ from $u_k$. Notice then that the length of $\mathcal{C}^{(\sigma)}_k$ is exactly that of $\T_k$, minus the initial part between $\rho_k$ and the first $y_i$ to be encountered. However, since the $y_i$ are chosen uniformly from the length measure, this means that $\llbracket \rho_k,u_k\rrbracket$ is split according to a Dirichlet distribution with $n_k+1$ components. More specifically, we have
\[\mathrm{len}(\T_k)-\mathrm{len}(\mathcal{C}^{(\sigma)}_k)=(f(u_k)-f(\rho_k))\Delta^k_1\]
where, conditionally on $n_k,$ $\Delta^k_1$ is the first component of a vector $\Delta^k=(\Delta_1^k,\ldots,\Delta_{n_k+1}^k)$ which has Dirichlet$(1,\ldots,1)$ distribution. (See Figure~\ref{fig:lengths} for an illustration.) Since Dirichlet distributions have a density, we obtain
\[\pr[\mathrm{len}(\mathcal{C}^{(\sigma)}_k)=x \mid k\leq K, \mathrm{len}(\T_k),f(u_k),f(\rho_k),n_k]=0,\]
and integrating and taking the union over all $k$ gives us $(i).$ 

\begin{figure}
\centering
\includegraphics[scale=0.8]{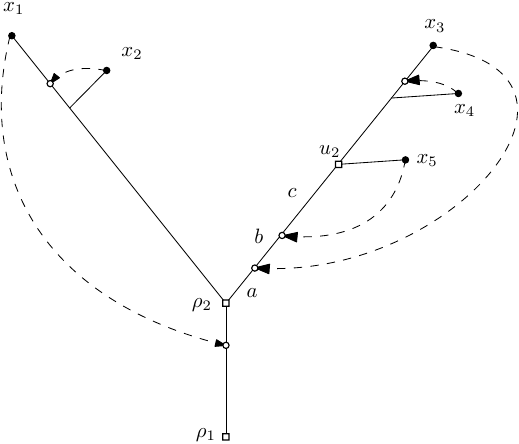}
\caption{For point $(i)$, focusing on the second component, $\T_2$ contains the leaves $x_3,x_4,x_5$, and the length of its initial segment $f(u_2)-f(\rho_2)$ is split by a Dirichlet$(1,1,1)$ into $(a,b,c).$ For $(ii),$ conditioning on $\rho_2$ being in $\T_1,$ then this split is still Dirichlet$(1,1,1).$}
\label{fig:lengths}
\end{figure}

To prove $(ii)$, consider two distinct integers $k$ and $l$. If $k\leq K$ and $l\leq K$, let
\[A_k = \{\rho_k\not\in \mathcal{C}^{(\sigma)}_l\}\]
and
\[A_l = \{\rho_l\not\in \mathcal{C}^{(\sigma)}_k\}.\]
Observe that $\pr[A_k\cup A_l]=1,$ since $\mathcal{C}^{(\sigma)}_k$ and $\mathcal{C}^{(\sigma)}_l$ do not intersect.  Now, on the event $A_l,$ $\T_k$ and $\T_l$ intersect either at point $\rho_k$ or not at all, and we can still write
\[\mathrm{len}(\T_k)-\mathrm{len}(\mathcal{C}^{(\sigma)}_k)=(f(u_k)-f(\rho_k))\Delta^l_1\]
where, conditionally on $\T_l,$ $n_k$ and the event $A_l,$ $\Delta^l_1$ is the first component of a Dirichlet$(1,\ldots,1)$ vector. (Again see Figure~\ref{fig:lengths}.) This means that the length of $\mathcal{C}^{(\sigma)}_k$ has a (conditional) density, and integrating, we get
\[\pr \left[\mathrm{len}(\mathcal{C}^{(\sigma)}_k)=\mathrm{len}(\mathcal{C}^{(\sigma)}_l) \Big| A_l,\; k,l\leq K\right]=0.\]
Symmetrising then yields that
\[\pr\left[\mathrm{len}(\mathcal{C}^{(\sigma)}_k)=\mathrm{len}(\mathcal{C}^{(\sigma)}_l) \Big| k,l\leq K\right]=0,\]
and taking a countable union yields $(ii)$.
\end{proof}

\begin{proof}[Proof of Proposition~\ref{prop:differentlengths}] We label the strongly connected components of $\mathcal{D}$ in such a way that, for $i\in\N,$ those which belong to $\mathcal{D}_i$ are called $\mathcal{C}_{i,1},\ldots,\mathcal{C}_{i,K_i}.$ Consider $\mathcal{C}_{i,k}$ and $\mathcal{C}_{j,l}$ for $i,j,k,l$ in $\N.$ We can assume $i\neq j$ as the case where $i=j$ has already been treated. Conditionally on the excursion lengths $(\sigma_i,i\in \N)$, $\mathcal{C}_{i,k}$ and $\mathcal{C}_{j,l}$ are independent and we have $\pr[\mathrm{len}(\mathcal{C}_{i,k})=x]=0$ for all $x>0.$ Thus we have $\pr[\mathrm{len}(\mathcal{C}_{i,k})=\mathrm{len}(\mathcal{C}_{j,l})\mid \mathrm{len}(\mathcal{C}_{j,l})]=0,$ and integrating to remove the conditioning yields $\pr[\mathrm{len}(\mathcal{C}_{i,k})=\mathrm{len}(\mathcal{C}_{j,l})]=0$.   This completes the proof.
\end{proof}

\section{Convergence of the strongly connected components}\label{sec:mainproof}

For $n\in\N,$ let $p=p(n)$ be such that $p=1/n+\lambda n^{-4/3} + o(n^{-4/3})$ as $n\to\infty.$ Recall that $(C_i(n),i\in\N)$ are the strongly connected components of $\vec{G}(n,p)$, listed in decreasing order of size (with ties broken by using the increasing order of smallest vertex-label), where we treat isolated vertices as copies of the loop of zero length, and additionally append infinitely many copies of the loop of zero length. Let $(\mathcal{C}_i,i\in\N)$ be the strongly connected components of $\mathcal{D},$ listed in decreasing order of length. 

We restate the main theorem.
\medskip

\noindent \textbf{Theorem~\ref{thm:main}.}
\[\left(\frac{C_i(n)}{n^{1/3}},i\in\N \right) \cv  (\mathcal{C}_i,i\in\N)\]
\emph{with respect to the distance $d$ defined by
\[
d(\mathbf{A},\mathbf{B})=\sum_{i=1}^{\infty} d_{\G}(A_i,B_i),
\]
for $\mathbf{A}, \mathbf{B} \in \G^{\N}$.}
\medskip

The aim of this section is to prove this theorem.  We begin by discussing some topological issues related to $d_{\G}$.  We then prove a series of preliminary results, before finally turning to the proof of Theorem~\ref{thm:main}.

\subsection{The relationship between $d_{\G}$ and the Gromov--Hausdorff distance}
Recall from the introduction the definition of metric directed multigraphs (MDMs), and that the distance between two such objects $X=(V,E,r,\ell)$ and $X'=(V',E',r',\ell')$ is defined by
\[
d_{\G}(X,X')= \inf_{(f,g)\in\mathrm{Isom}(X,X')} \ \sup_{e\in E} \ |\ell(e)-\ell'(g(e))|.
\]
Elements of $\vec{\mathcal{G}}$ can also be viewed as metric spaces, by thinking of each edge with positive length as a line segment, identifying vertices joined by edges with length $0$, and forgetting the orientation of the edges. This means that we can also compare them using the Gromov--Hausdorff distance $d_{\mathrm{GH}}$ (see Appendix~\ref{sec:appendix1} for a short introduction). The resulting topology is however weaker, as the following lemma shows.

\begin{lem}\label{lem:distancedomination} For $X\in\G$ and $X'\in\G,$ we have
\[d_{\mathrm{GH}}(X,X')\leq \frac{1}{2}|E| \, d_{\G}(X,X')\]
\end{lem}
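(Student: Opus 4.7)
The natural approach is via the standard correspondence characterisation of the Gromov--Hausdorff distance: $d_{\mathrm{GH}}(X,X') \leq \frac{1}{2}\mathrm{dis}(R)$ for any correspondence $R \subset X \times X'$, where $\mathrm{dis}(R) = \sup\{|d_X(x,y)-d_{X'}(x',y')| : (x,x'),(y,y')\in R\}$. If $d_{\G}(X,X')=\infty$ there is nothing to prove, so fix $\varepsilon > d_{\G}(X,X')$ and pick an isomorphism $(f,g)\in\mathrm{Isom}(X,X')$ achieving $\sup_{e\in E}|\ell(e)-\ell'(g(e))|<\varepsilon$. In particular $|E|=|E'|$. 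I will build a correspondence directly from $(f,g)$ by gluing together edgewise affine bijections.

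More precisely, for each edge $e\in E$, parametrise $e$ as a line segment $[0,\ell(e)]$ and $g(e)$ as $[0,\ell'(g(e))]$, oriented consistently (tail to head), and put into $R$ every pair $(x,x')$ where $x$ sits at proportion $s\in[0,1]$ along $e$ and $x'$ at the same proportion $s$ along $g(e)$. Vertices go to vertices via $f$. Because $(f,g)$ is a graph isomorphism, this prescription is globally consistent at endpoints of edges, so $R$ is a well-defined correspondence between the underlying metric spaces.

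It remains to bound the distortion. Given $(x,x'),(y,y')\in R$, pick a geodesic $\gamma$ in $X$ from $x$ to $y$. Since a geodesic in a finite metric graph does not repeat any edge, $\gamma$ traverses a partial piece of the edge containing $x$, then some intermediate edges, then a partial piece of the edge containing $y$, with the total number of edges (partial or full) used at most $|E|$. Applying $(f,g)$ edgewise produces a path $\gamma'$ in $X'$ from $x'$ to $y'$: on each full intermediate edge $e$ the length changes by $\ell'(g(e))-\ell(e)$, and on each partial edge by a proportion thereof, so in all
\[
\bigl|\,\mathrm{len}(\gamma')-\mathrm{len}(\gamma)\,\bigr| \leq |E|\cdot \sup_{e\in E}|\ell(e)-\ell'(g(e))| < |E|\varepsilon.
\]
Hence $d_{X'}(x',y')\leq \mathrm{len}(\gamma')< d_X(x,y)+|E|\varepsilon$. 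Applying the symmetric argument with a geodesic in $X'$ pulled back through $(f^{-1},g^{-1})$ gives $d_X(x,y) < d_{X'}(x',y')+|E|\varepsilon$, so $\mathrm{dis}(R)\leq |E|\varepsilon$ and therefore $d_{\mathrm{GH}}(X,X') \leq \tfrac{1}{2}|E|\varepsilon$. Letting $\varepsilon\downarrow d_{\G}(X,X')$ yields the claimed inequality.

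The only real subtlety is the claim that the distortion is controlled by the \emph{number} of edges along a geodesic, which is where I rely on the absence of repetitions in a geodesic; the rest is bookkeeping.
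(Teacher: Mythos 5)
Your proof is correct and follows essentially the same route as the paper: both construct the edgewise affine bijection induced by a (near-)optimal isomorphism, view it as a correspondence, and bound its distortion by $\sum_{e\in E}|\ell(e)-\ell'(g(e))| \leq |E|\,\sup_{e\in E}|\ell(e)-\ell'(g(e))|$. The only loose point is your claim that a geodesic never repeats an edge --- it can use two disjoint terminal pieces of a single edge when both endpoints lie on that edge --- but since those pieces together cover at most the whole edge, your proportional accounting still yields the stated bound.
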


\begin{proof} If $X$ and $X'$ do not have the same graph structure, then $d_{\G}(X,X')=\infty$ and the statement holds trivially. If they do have the same graph structure then, up to applying an optimal isomorphism $(f,g)$, we can assume that they have the same vertex and edge sets, i.e.\ $X=(V,E,r,\ell)$ and $X'=(V,E,r,\ell'),$ where the length assignments $\ell$ and $\ell'$ are such that $\sup_{e \in E} |\ell(e)-\ell'(e)|=d_{\G}(X,X')$. We let $\phi$ be the natural bijection from $X$ to $X'$ when viewed as metric spaces, which acts identically on $V$ and follows the edges ``linearly''.  Viewing $\phi$ as a correspondence (see Appendix~\ref{sec:appendix2}), its distortion can be bounded above by 
\[
\sum_{e \in E} |\ell(e)-\ell'(e)| \leq |E| \, \sup_{e \in E}\, |\ell(e)-\ell'(e)|= |E| \, d_{\G}(X,X'). \qedhere 
\]
\end{proof}

In the case of trees, it is possible to recover a convergence for $d_{\G}$ from a \emph{pointed} Gromov--Hausdorff convergence (see Appendix~\ref{sec:appendix1} for a definition).  (Variants of these ideas have been used in several places in the literature, and we do not claim that the following proposition is particularly novel.  We have not, however, found a convenient reference.)

\begin{prop}\label{prop:cvtreesG} Fix $k\in\N.$ For $n\in\N$, let $(\mathcal{T}_n,n\in\N)$ (resp.\ $\mathcal{T}$) be $\R$-trees with roots $\rho_n$ (resp.\ $\rho$) and $k$ selected distinct leaves $(x_{i,n},1\leq i \leq k)$ (resp.\ $(x_i,i\leq k)$). Then let
\[
\overline{\mathcal{T}}_n= \bigcup_{i=1}^k \llbracket \rho_n,x_{i,n}\rrbracket
\]
be the subtree spanned by the $k$ selected leaves and the root (and define $\overline{\mathcal{T}}$ similarly). View it as an element of $\G$ by taking as vertices the root, the leaves, and all the branch points, orienting each edge away from $\rho_n$ (resp. $\rho$) and giving each edge the length of its corresponding metric path. 

Suppose that $(\mathcal{T}_n,\rho_n, x_{1,n}, \ldots, x_{k,n})$ converges to $(\mathcal{T},\rho,x_1, \ldots x_k)$ for the $(k+1)$-pointed Gromov--Hausdorff topology, and that $\overline{\mathcal{T}}$ is binary. Then $\overline{\mathcal{T}}_n$ converges to $\overline{\mathcal{T}}$ for $d_{\G}.$ Specifically, $\overline{\T}_n$ and $\overline{\T}$ are seen as elements of $\G$ by taking as vertices their roots, leaves, and branchpoints and directing edges away from the root. The map which sends $\rho_n$ to $\rho$ and $x_{i,n}$ to $x_i$ for each $1 \leq i \leq k$ then extends uniquely to a graph isomorphism, under which the length of each edge in $\overline{\mathcal{T}}_n$ converges to that of the corresponding edge in $\overline{\mathcal{T}}.$
\end{prop}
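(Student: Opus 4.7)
The plan is to reduce the convergence of the reduced trees as MDMs to the convergence of the pairwise distances between the $k+1$ marked points, using the key fact that a subtree of an $\R$-tree spanned by finitely many points is entirely determined, as a labelled combinatorial tree with positive edge lengths, by its distance matrix on those points. The pointed Gromov--Hausdorff hypothesis supplies the matrix-convergence for free, and the binariness of $\overline{\T}$ will guarantee that the combinatorial type is locally constant under small perturbations of that matrix.

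First I would unpack the pointed Gromov--Hausdorff hypothesis. For each $n$ there exists a correspondence between $\T_n$ and $\T$ containing the $k+1$ matched pairs of marked points and of distortion tending to $0$; hence the pairwise distances among the marked points converge: $d_n(x_{i,n}, x_{j,n}) \to d(x_i, x_j)$, and similarly for distances involving the roots.

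Next I would reconstruct $\overline{\T}$ from these distances. In any $\R$-tree, the Gromov product $(y \mid z)_x := \frac{1}{2}(d(x,y) + d(x,z) - d(y,z))$ equals the distance from $x$ to the branch point of $y$ and $z$ as seen from $x$. Applying this to triples of marked points locates every branch point of $\overline{\T}$ and yields the entire combinatorial tree; moreover, each edge of $\overline{\T}$ has length given by an explicit linear combination of the pairwise distances among the marked points. The binariness assumption means that every internal vertex has degree exactly $3$ and every edge length is strictly positive, which translates into strict inequalities in the four-point condition and in the Gromov-product description, all of which are open conditions on the distance matrix.

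Thus, for $n$ large enough, $\overline{\T}_n$ shares the same combinatorial type as $\overline{\T}$, and the unique graph isomorphism extending the marked-point identification is obtained by matching branch points according to their Gromov-product coordinates in terms of the marked points; by continuity of the linear formulas, the corresponding edge lengths converge, which gives $d_{\G}(\overline{\T}_n, \overline{\T}) \to 0$. The main obstacle is the bookkeeping: one must verify carefully that ``binary reduced subtree of a given combinatorial type, with a given labelling of its leaves and root'' is an open condition on the distance matrix, so that the combinatorial type is locally constant near $\overline{\T}$. Once that is in hand, everything reduces to the continuity of explicit formulas.
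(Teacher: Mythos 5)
Your proposal is correct, but it takes a genuinely different route from the paper's proof. You work entirely with the distance matrix of the $k+1$ marked points: pointed Gromov--Hausdorff convergence gives convergence of that matrix, the reduced tree is reconstructed from it via the Gromov products $(x_i\mid x_j)_\rho=d(\rho,x_i\wedge x_j)$, binariness makes the labelled combinatorial type an open condition, and once the type is fixed each edge length is a (linear) function of the matrix entries, hence converges. The ``bookkeeping'' you defer is indeed the crux, but it closes cleanly: for each triple $\{i,j,l\}$, two of the three Gromov products relative to $\rho$ coincide and, since $\overline{\T}$ is binary (with root of degree $1$, as in the situation where the proposition is applied), the third is strictly larger; these finitely many strict inequalities are open, so they are inherited by $\overline{\T}_n$ for $n$ large, and since $\overline{\T}_n$ is itself a reduced subtree of an $\R$-tree, its rooted triples then agree with those of $\overline{\T}$, which determines the labelled shape and the matching of branch points $x_{i,n}\wedge x_{j,n}\mapsto x_i\wedge x_j$. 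The paper argues differently: it embeds all the trees in a common compact space in which $\T_n\to\T$ in the Hausdorff sense, shows that the segments $\llbracket\rho_n,x_{i,n}\rrbracket$ and the MRCAs $x_{i,n}\wedge x_{j,n}$ converge in that embedding, and then obtains the $d_{\G}$ convergence by induction on $k$, locating at each step the edge of the current reduced tree containing the projection of the new leaf. Your route avoids both the embedding and the induction and is more elementary and finite-dimensional; the paper's route produces along the way the explicit geometric statement that branch points can be adjoined to the pointed Gromov--Hausdorff convergence, which is the same information you extract algebraically from the Gromov products. One caution, common to both arguments: if a branch point of $\overline{\T}$ were to sit at the root (a root of degree $2$ or $3$ is not literally excluded by ``binary''), the strict inequalities you need would fail, just as the paper's induction step would; in the regime where the proposition is used the root is almost surely a leaf, so this degeneracy does not occur.
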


\begin{proof}
For notational convenience, we let $x_{0,n}=\rho_n$ and $x_0=\rho$ in the following proof. The pointed Gromov--Hausdorff convergence which is assumed to hold in the statement of the proposition implies that, for any $i$ and $j$ in $\{0,\ldots,k\},$ we have that $d(x_{i,n},x_{j,n})$ converges to $d(x_i,x_j)$ as $n\to\infty.$ Indeed, by taking a suitable embedding for each $n$, we can see that 
\[
d(x_i,x_j)-2\eta\leq d(x_{i,n},x_{j,n})\leq  d(x_i,x_j)+2\eta
\]
for any $\eta> d_{\mathrm{GH}}((\mathcal{T}_n,\rho_n, x_{1,n}, \ldots, x_{k,n}),(\mathcal{T},\rho,x_1, \ldots x_k))$.
Next, using the formula $d(\rho,x\wedge y)=\frac{1}{2}(d(\rho,x)+d(\rho,y)-d(x,y))$ (which is valid in any $\R$-tree), we see that these convergences extend to distances of the form $d(\rho_n,x_{i,n}\wedge x_{j,n})$ and $d(x_{i,n},x_{i,n}\wedge x_{j,n}).$

We will now prove the proposition by induction, adding the leaves one by one. Specifically, for $l\leq k,$ let $\T_n^l$ and $\T^l$ be the subtrees of $\T_n$ and $\T$ spanned by the root and the first $l$ selected leaves.  We prove that the map mentioned at the end of the proposition, restricted to $\T_n^l,$ gives convergence in $d_{\G}$, by induction on $l$.
 
The base case $l=1$ is simple, since all of the trees we consider are just single line segments, and $d(\rho_n,x_{1,n})$ converges to $d(\rho,x_1)$ as mentioned above. So let us now focus on the induction step, and assume the proposition at rank $l$ for some $l\leq k-1.$ Observe then that we only need to prove two things: first, that the new branchpoint of $\T_n^{l+1}$ is, for $n$ large enough, added to the same edge as the new branchpoint of $\T^{l+1}$ and, second, that the height of this branchpoint converges. Indeed, the first point will give the desired graph isomorphism, while the second, combined with the convergence of $d(\rho_n,x_{l+1,n}),$ will give convergence of the lengths of all the edges.

Let $\llbracket y,z\rrbracket$ be the edge of $\T^l$ which contains the new branchpoint of $\T^{l+1}$, with $y$ being an ancestor of $z$. Since $\T$ is binary, this new branchpoint is equal to neither $y$ nor $z$; let us call it $b$. This implies that $|d(x_{l+1},y)-d(x_{l+1},z)| = |d(y,b) - d(z,b)| < d(y,b) + d(b,z) = d(y,z)$. Now, letting $y_n$ and $z_n$ be the points in $\T_n^l$ corresponding to $y$ and $z$ respectively  through the graph isomorphism, and using that distances in $\T_n^{l}$ converge to those in $\T^l$, we have that $|d(x_{l+1,n},y_n)-d(x_{l+1,n},z_n)|<d(y_n,z_n)$ for $n$ sufficiently large.  This implies that the new branchpoint lies between $y_n$ and $z_n$.  (Indeed, if the new branchpoint did not lie in $\llbracket y_n,z_n \rrbracket$, then we would have that the path from the new branchpoint to one of $y_n$ and $z_n$ would need to traverse the whole edge $\llbracket y_n,z_n \rrbracket$ and then we would have $|d(x_{l+1,n},y_n)-d(x_{l+1,n},z_n)| = d(y_n,z_n)$.)  Finally, by observing that this branchpoint can consistently be written as $x_{i,n}\wedge x_{l+1,n},$ with $i$ chosen such that $x_i$ is a descendant of $y$, our initial remark on convergence of distances concludes the induction and the proof.
\end{proof}

Note that Proposition \ref{prop:cvtreesG} fails if $\bar{\T}$ is not binary. Indeed, it would then be possible for $\bar{\T}_n$ to never have the same graph structure, thus preventing the convergence for $d_{\G}.$

\begin{prop}\label{prop:graphtocc} If the connected components of an MDM $X$ all have different total lengths, and $(X_n,n\in \N)$ is a sequence which converges to $X$ for $d_{\G}$, then the strongly connected components of $X_n$, listed in decreasing order of length and seen as elements of $\vec{\mathcal{G}}$, converge to those of $X$.
\end{prop}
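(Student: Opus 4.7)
My plan is to exploit the fact that $d_{\G}$ takes the value $+\infty$ between MDMs of different graph structure, so that $d_{\G}(X_n,X)\to 0$ forces $X_n$ and $X$ to be isomorphic as directed multigraphs for all sufficiently large $n$. For such $n$ the set $\mathrm{Isom}(X_n,X)$ is finite and nonempty, so the infimum in the definition of $d_{\G}$ is attained by some pair $(f_n,g_n)$. I would set $\varepsilon_n := \sup_{e \in E(X_n)} |\ell_n(e) - \ell(g_n(e))| = d_{\G}(X_n,X) \to 0$.

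Next I would observe that strong connectivity depends only on the underlying directed graph structure, so $(f_n,g_n)$ restricts to a bijection between the strongly connected components of $X_n$ and those of $X$. Writing $D_1,\ldots,D_k$ for the strongly connected components of $X$ and $D_1^n,\ldots,D_k^n$ for those of $X_n$, indexed so that $(f_n,g_n)$ sends $D_j^n$ to $D_j$, the restricted isomorphism still has edge-length discrepancy at most $\varepsilon_n$. Hence $d_{\G}(D_j^n,D_j) \leq \varepsilon_n \to 0$ for each $j$, and in particular $\mathrm{len}(D_j^n) \to \mathrm{len}(D_j)$. (If one insists on first reducing each component by contracting vertices of degree $2$ before viewing it as an element of $\G$, the bound is only multiplied by $|E(D_j)|$, which is harmless.)

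The final step is where the distinct-length hypothesis plays its role. Since the numbers $\mathrm{len}(D_j)$ are pairwise distinct, there is a unique permutation $\pi$ of $\{1,\ldots,k\}$ that sorts them in strictly decreasing order, and the convergence $\mathrm{len}(D_j^n) \to \mathrm{len}(D_j)$ forces the same $\pi$ to sort the $D_j^n$ for all sufficiently large $n$. It follows that the $i$-th largest strongly connected component of $X_n$ is $D_{\pi(i)}^n$, which converges in $d_{\G}$ to $D_{\pi(i)}$, the $i$-th largest strongly connected component of $X$. Padding both sides with copies of $\mathfrak{L}$ if we want to compare them in the $\ell^1$ sequence metric $d$ then yields that convergence for free, since the tails are constant loops of length $0$.

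The whole argument is essentially mechanical, resting on only two ingredients: the $+\infty$ convention of $d_{\G}$ between non-isomorphic multigraphs (which buys the eventual graph isomorphism) and the distinct-length hypothesis (which stabilises the decreasing-length ordering). I do not anticipate any real obstacle; the distinct-length hypothesis is precisely what is needed to prevent the $i$-th component of $X_n$ from oscillating between two components of $X$ of equal length, and nothing beyond that is at stake.
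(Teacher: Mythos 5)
Your argument is correct and is essentially the paper's own proof: both use that $d_{\G}$ is infinite between non-isomorphic multigraphs to reduce, for large $n$, to a common graph structure with edge lengths converging, then observe that strongly connected components are determined by the graph structure, that their lengths converge, and that the distinct-length hypothesis stabilises the decreasing-length ordering. The extra remarks (attainment of the infimum, contraction of degree-2 vertices, padding by $\mathfrak{L}$) are harmless elaborations of the same route.
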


\begin{proof}
Writing $X=(V,E,r,\ell),$ let $(C_1,\ldots,C_k)$ be the strongly connected components of $X,$ ordered by decreasing length. For $n\in\N$ large enough, one may assume we have $X_n=(V,E,r, \ell_n),$ where $\ell_n(C_i) \to \ell(C_i)$ as $n \to \infty$ for all $i$.  In particular, for $n$ large enough, $\ell_n(C_i)$ is strictly decreasing in $i$, and so $(C_1,\ldots,C_k)$ is also the ordered sequence of the strongly connected components of $X_n,$ which completes the proof.
\end{proof}

\subsection{The components originating from a single tree} \label{sec:onetree}
The first part of the proof will consist in proving the convergence of the components originating from a single tree.  For $m\in\N$, we take a plane tree $T_m$ which has the distribution of a tree component of $\mathcal{F}_{\vec{G}(n,p)}$ conditioned to have size $m$. We are interested in $m \sim \sigma n^{2/3}$ so that, in particular, we have $mp^{2/3}\to \sigma$ as $m\to\infty$. From \cite{A-BBG12}, up to an unimportant relabelling of the vertices, $T_m$ has the same distribution as a uniform random labelled tree on $[m]$, biased by $(1-p)^{-a(T_m)},$ where $a(T_m)$ is the number of permitted edges in $T_m$. We give this tree a planar embedding by rooting at the vertex labelled 1 (we also refer to this root as $\rho_m$) and then simply using the increasing order on the labels of the children of any vertex.  Let $H^m: \{0,\ldots,m-1\} \to \Z_+$ be the height function of $T_m$, such that $H^m(i)$ is the height of the $i$-th vertex in the planar order, starting with $H^m(0)=0.$ We recall that $\|T_m\| = \max_{0 \leq i \leq m-1} H^m(i)$ is the height of the tree $T_m$.  Theorem 15 of \cite{A-BBG12} states that
\begin{equation}\label{eq:excursionconvergence}
\big((m/\sigma)^{-1/2}H^m(\lfloor(m/\sigma)t\rfloor),0\leq t\leq \sigma\big) \cv  (2\tilde{\exc}^{(\sigma)}(t),0\leq t\leq \sigma)
\end{equation}
uniformly as $m \to \infty$.  By Lemma~\ref{lem:pointedGHexcursion} this has the straightforward consequence that
\begin{equation} \label{eqn:treeconv}
\left(\left(\frac{\sigma}{m}\right)^{1/2} T_m, \rho_m \right) \cv (\mathcal{T}_{\sigma}, \rho)
\end{equation}
as $m \to \infty$, for the 1-pointed Gromov--Hausdorff distance (see (\ref{eqn:kpointedGH}) for a definition).

As in Proposition~\ref{prop:coupling}, we include each of the $\binom{m}{2}$ possible back edges and $a(T_m)$ possible surplus edges independently with probability $p$, and let $X_m$ be the resulting directed graph. The aim of this section is to show that the rescaled strongly connected components of $X_m$ converge in distribution to those of $\mathcal{M}_{\sigma}.$ In order to do this, we will use the structure of back edges outlined in Section~\ref{sec:discretebackedges}. Specifically, let $\big((x_{i,m},y_{i,m}),m\in\N,i\leq N_m)$ be the back edges obtained with this procedure, and let $X^*_m$ be the subgraph of $X_m$ obtained by removing (a) all surplus edges and (b) all back edges which are not of the above form. We will first show that the strongly connected components of $X^*_m$ converge in distribution to those of $\mathcal{M}_{\sigma}$, and then that $X_m$ and $X^*_m$ have the same strongly connected components with high probability.  In particular, we show that the surplus edges with high probability do not play any role in creating the strongly connected components.

\subsubsection{Convergence of the marked points}
Our next step is to improve the convergence of the rooted tree $\left(\left(\frac{\sigma}{m}\right)^{1/2}T_m, \rho_m\right)$ to include the marked points $(x_{i,m})$.  Since the number of marked points is random, we use a pointed Gromov--Hausdorff distance $d_{\mathrm{GH}}^*$ which allows for a variable number of marks (see (\ref{eqn:pointedGH}) for a formal definition).

Recall the relevant notation for the limit object from Section~\ref{sec:continuousbackedges}.  In particular, we write $f=2\tilde{\exc}^{(\sigma)}$, we let $s_i, 1 \le i \le N$ be the points of the Poisson process and we let $x_i = p_f(s_i)$ be their projections onto the tree $\mathcal{T}_{\sigma}$ encoded by $f$.

\begin{prop}\label{prop:cvtreewithtails}
We have
\[\left(\left(\frac{\sigma}{m}\right)^{1/2}T_m, \rho_m, \big(x_{i,m},i\leq N_m \big)\right) \cv \Big(\T_{\sigma},\rho,\big(x_i,i\leq N \big)\Big)\]
as $m \to \infty$, for the topology generated by $d_{\mathrm{GH}}^*$.
\end{prop}


%
\begin{proof} By Skorokhod's representation theorem, there exists a probability space on which the convergence (\ref{eq:excursionconvergence}) occurs almost surely, which entails an almost sure convergence in (\ref{eqn:treeconv}) also.  We need to add the other marked points to the convergence. Let $k_{i,m}$ be the index of $x_{i,m}$ in the planar ordering of $T_m$. We will show by induction on $i$ that $\pr[N_m\geq i] \to \pr[N\geq i]$ and that, conditionally on $N_m\geq i$, the rescaled index $(\frac{m}{\sigma})^{-1} k_{i,m}$ converges in distribution to $s_i$.  We can then use Lemma~\ref{lem:pointedGHexcursion} to transfer this convergence of the rescaled indices $(\frac{m}{\sigma})^{-1} k_{i,m}$ to that of the $x_{i,m}$.  Together, these elements will suffice to give the claimed convergence in the sense of $d_{\mathrm{GH}}^*$.

We start with $i=1.$ Since the number of ancestral back edges originating at the $k$-th point of $T_m$ has distribution $\mathrm{Bin}(H^m(k),p),$ and $pH^m(\lfloor xm \rfloor)\sim (\frac{m}{\sigma})^{-1}f(x),$ it is straightforward to see that $\pr[N_m\geq 1]\to \pr[N\geq 1]$ and that, conditionally on $N_m\geq 1$, $(\frac{m}{\sigma})^{-1} k_{1,m}$ converges in distribution to the first point of a Poisson point process with intensity $f(x)\mathrm d x$, also conditioned to have at least one point, which is precisely $s_1$. (See the proof of Lemma 19 of \cite{A-BBG12} for a more detailed version of an essentially identical argument.) 

The induction step uses the same idea. Assume that the claimed convergence holds up to rank $i$.  Recall that $\T_i=\cup_{j=1}^i \llbracket \rho,x_i\rrbracket$.  The index $k_{i+1,m}$ is found by giving to each $k\geq k_{i,m}$ a $\mathrm{Bin}(l_i(k),p)$ number of marks, where $l_i(k)$ is the number of possible heads of a back edge originating at the $k$-th point in the planar ordering of $T_m$. If we let $T_m(i)$ be the subtree of $T_m$ spanned by the root and $x_{1,m},\ldots,x_{i,m}$, then $l_i(k)=|T_m(i)|+H^m(k)-\min \{H^m(l): l\in\{k_{i,m},\ldots,k\}\}.$  By the induction hypothesis and Proposition~\ref{prop:cvtreesG}, we have that $T_m(i)$ converges to $\T_i$ for $d_{\G}$ and so, in particular, $|T_m(i)| \sim \left(\frac{m}{\sigma}\right)^{1/2} \mathrm{len}(\T_i)$.  So we have that the instantaneous rate at which points occur satisfies
\[
p l_i(\lfloor x m \rfloor) \sim \left( \frac{m}{\sigma} \right)^{-1} \left(\mathrm{len}(\mathcal{T}_i) + f(x) - \hat{f}(s_i,x) \right).
\]
It follows that $\pr[N_m \ge i+1|N_m \ge i] \to \pr[N \ge i+1|N \ge i]$ and that, conditionally on $N_m \ge i+1$, we have $(\frac{m}{\sigma})^{-1} k_{i+1,m} \cv s_{i+1}$.  Hence, by induction we have that
\[
N_m \cv N
\]
and
\[
(s_{i,m}, 1 \le i \le N_m) \cv (s_i, 1 \le i \le N)
\]
as $m \to \infty$.  The proposition then follows by applying Lemma~\ref{lem:pointedGHexcursion}.
\end{proof}

\begin{prop}\label{prop:cvcolouredtree} Let $T^{\mathrm{mk}}_m :=\bigcup_{i=1}^{N_m} \llbracket \rho_m,x_{i,m} \rrbracket$ and $\T_{\sigma}^{\mathrm{mk}} :=\bigcup_{i=1}^{N} \mathcal{T}_i$ be the marked subtrees of $T_m$ and $\T_{\sigma}$ respectively. Then
\[
\left(\left(\frac{\sigma}{m}\right)^{1/2}T_m^{\mathrm{mk}}, \rho_m, \big((x_{i,m},y_{i,m}),i\leq N_m \big)\right) \cv \Big(\T_{\sigma}^{\mathrm{mk}}, \rho,\big((x_i,y_i),i\leq N \big)\Big),
\]
as $m \to \infty$ for the topology generated by $d_{\mathrm{GH}}^*$ and, moreover,
\[
\left(\frac{\sigma}{m}\right)^{1/2}|T^{\mathrm{mk}}_m| \cv \mathrm{len}(\T_{\sigma}^{\mathrm{mk}}).
\]
\end{prop}
\begin{proof}
The pointed Gromov--Hausdorff convergence of Proposition~\ref{prop:cvtreewithtails} can be restricted to the marked subtrees by using the same embeddings. Using Skorokhod's representation theorem, we may thus work on a probability space where 
\[
\left(\left(\frac{\sigma}{m}\right)^{1/2}T_m^{\mathrm{mk}}, \rho_m, (x_{i,m}, i \le N_m) \right) \underset{m\to\infty}{\longrightarrow}\Big(\T_{\sigma}^{\mathrm{mk}},\rho, (x_i, i \le N) \Big)
\]
almost surely for $d_{\mathrm{GH}}^*$. (In particular, almost surely $N_m = N$ for all $m$ sufficiently large.)  We will add the $y$ terms to this convergence by using their distributions and a correspondence. The basic idea is that $y_{i,m}$ has the uniform distribution on the finite set $T_m(i)\setminus \{x_{i,m}\}$, which converges to the normalised length measure on $\mathcal{T}_i$.

By Proposition~\ref{prop:cvtreesG}, $T^{\mathrm{mk}}_m$ has the same graph structure as $\T_{\sigma}^{\mathrm{mk}}$ for $m$ sufficiently large. Let us make this more precise by denoting by $\mathsf{T}^{\mathrm{mk}}$ the graph which has as vertices the root, branchpoints and leaves of $\T_{\sigma}^{\mathrm{mk}},$ and $\mathsf{E}^{\mathrm{mk}}$ its set of edges. Then $\T_{\sigma}^{\mathrm{mk}}$ can be seen as a MDM by giving to each edge $e \in \mathsf{E}^{\mathrm{mk}}$ its length $\ell(e)$, while $T_m^{\mathrm{mk}}$ is the graph obtained by splitting each edge $e \in \mathsf{E}^{\mathrm{mk}}$ into a path of length $k_m(e)\in\N$. Note that we have (again by Proposition~\ref{prop:cvtreesG}) that
\[
\left(\frac{\sigma}{m}\right)^{1/2}k_m(e) \to \ell(e).
\] 
This allows us to build a function $\phi_m$ which naturally injects $T_m^{\mathrm{mk}}$ into $\T_{\sigma}^{\mathrm{mk}}.$  Specifically, the vertices of $T_m^{\mathrm{mk}}$ which also belong to $\mathsf{T}^{\mathrm{mk}}$ are mapped so as to preserve the structure, and the vertices of degree 2 in $T_m^{\mathrm{mk}}$ which subdivide the edge $e$ are mapped ``linearly", dividing the corresponding edge of $\T_{\sigma}^{\mathrm{mk}}$ into $k_m(e)$ segments of equal length. The inverse of this injection then may be naturally extended to a projection $\psi_m$ from $\T_{\sigma}^{\mathrm{mk}}$ onto $T_{m}^{\mathrm{mk}}$ by letting $\psi_m(x)$ be the most recent ancestor of $x$ which belongs to $\phi_m(T_m^{\mathrm{mk}})$. This allows us to define a correspondence $\mathcal{R}$ between $T_m^{\mathrm{mk}}$ and $\T_{\sigma}^{\mathrm{mk}}$ by letting $\psi_m(x) \mathcal{R}\, x$ for all $x\in \T_{\sigma}^{\mathrm{mk}}$. 

The distortion of $\mathcal{R}$ is then
\[\mathrm{dis}\,\mathcal{R}= \underset{x,y\in \T_{\sigma}^{\mathrm{mk}}}\sup \left|d(x,y)-\left(\frac{\sigma}{m}\right)^{1/2}d(\psi_m(x),\psi_m(y))\right|.\]
Now, for any $x, y \in \T_{\sigma}^{\mathrm{mk}},$  
\begin{align*}
& \left| d(x,y)-\left(\frac{\sigma}{m}\right)^{1/2}d(\psi_m(x),\psi_m(y)) \right| \\
& \quad \le d(x,\phi_m(\psi_m(x))+d(y,\phi_m(\psi_m(y))  + \left|d(\phi_m(\psi_m(x)),\phi_m(\psi_m(y)))-\left(\frac{\sigma}{m}\right)^{1/2}d(\psi_m(x),\psi_m(y))\right| \\
& \quad \le 2\,\underset{z\in\T_{\sigma}^{\mathrm{mk}}}\sup d(z,\phi_m(\psi_m(z)) + \underset{u,v\in T_m^{\mathrm{mk}}}\sup \left|\left(\frac{\sigma}{m}\right)^{1/2}d(u,v)-d(\phi_m(u),\phi_m(v))\right| \\
& \quad \le 2\,\underset{e\in\mathsf{E}^{\mathrm{mk}}}\sup \,\frac{\ell(e)}{k_m(e)} + \sum_{e\in\mathsf{E}^{\mathrm{mk}}} \left|\left(\frac{\sigma}{m}\right)^{1/2}k_m(e)-\ell(e)\right|.
\end{align*}
 This upper bound is easily seen to tend to $0$ as $m \to \infty$, and so $\mathrm{dis}\,\mathcal{R} \to 0$ also.

For $i\leq N,$ let $\nu_i$ be the normalised length measure on $\T_i$, and let $\nu_{i,m}$ be the uniform measure on $T_m(i)\setminus\{x_i\}.$ We aim to apply Lemma~\ref{lem:GHPtopointed}. Notice that  $\nu_i(\phi_m^{-1})$ is the probability measure on $T_m(i)$ which gives a weight proportional to $\mathrm{deg}(v)$ to any vertex $v$. We want to construct a coupling between this and $\nu_{i,m}$. Let $B_i(m)$ be the set containing the branchpoints of $T_m(i)$ as well as $x_{i,m}$.  Then the measures $\nu_{i,m}$ and $\nu_i$ are both equal to the uniform measure when conditioned on $T_m(i)\setminus B_i(m)$. Since the cardinality of $B_i(m)$ does not change as $m$ increases, we obtain that both $\nu_{i,m}(B_i(m))$ and $\nu_i(\phi_m^{-1}(B_i(m))$ tend to $0$. Then for any $\veps>0$, if $m$ is large enough, there exists a coupling $(U_m,V_m)$ of $\nu_{i,m}$ and $\nu_i(\phi_m^{-1})$ such that $U_m=V_m$ with probability at least $1-\veps.$ By construction, we can write $V_m=\psi_m(W_m)$, and $(U_m,W_m)$ is a coupling of $\nu_{i,m}$ and $\nu_i$ such that $U_m\, \mathcal{R} \,W_m$ with probability at least $1-\veps.$

It follows that 
\[
\left(\left(\frac{\sigma}{m}\right)^{1/2}T_m^{\mathrm{mk}}, \rho_m,(x_{i,m},i\leq N_m),(\nu_{i,m},i\leq N_m)\right) \cv \left(\T_{\sigma}^{\mathrm{mk}}, \rho,(x_{i},i\leq N),(\nu_{i},i\leq N)\right)
\]
for the topology generated by $d^*_{\mathrm{GHP}}$. The proposition then follows by applying Lemma~\ref{lem:GHPtopointed}.
\end{proof}

\subsubsection{Convergence of the marked graph}
Let $X^*_m=T^{\mathrm{mk}}_m$ along with all back edges $(x_{i,m},y_{i,m})$ for $i\leq N_m$, and recall that $\mathcal{M}_{\sigma}=\T_{\sigma}^{\mathrm{mk}} / \sim,$ where $\sim$ is the equivalence relation which identifies $x_i$ with $y_i$ for $i\leq N.$ We view these objects as elements of $\vec{\mathcal{G}},$ in a way which will fit the metric on $\vec{\mathcal{G}}.$ Specifically, we take the vertex set of $X_m^*$ to consist of $\rho$, the heads $y_{i,m}$ of the back edges for $i \le N_m$, and the branch points $x_{i,m} \wedge x_{j,m}$ for $i \neq j \le N_m$.  We take the vertices of $\mathcal{M}_{\sigma}$ to be $\rho$, $y_i$ for $i \le N$ (note that post-identification we have $x_i=y_i$), and the branch points $x_i \wedge x_j$ for $i \neq j \le N$. Because the Brownian continuum random tree is almost surely binary and the law of $\mathcal{T}_{\sigma}$ is absolutely continuous with respect to that of the Brownian continuum random tree, $\mathcal{T}_{\sigma}^{\mathrm{mk}}$ is also binary almost surely.  It follows that $\mathcal{M}_{\sigma}$ has $2N$ vertices and, as we will see, the same must also be true for $X_m^*$ for sufficiently large $m$.

\begin{prop}
$(\frac{\sigma}{m})^{1/2}X_m^* \cv \mathcal{M}_{\sigma}$ in $\G.$
\end{prop}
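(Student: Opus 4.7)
The strategy is to bootstrap the pointed Gromov--Hausdorff convergence of the marked trees (Corollary~\ref{cor:cvcolouredtree}) to a $d_{\vec{\mathcal{G}}}$ convergence via Proposition~\ref{prop:cvtreesG}, and then to check that passing from the marked tree to the MDM (by identifying $x_i$ with $y_i$ in the continuum, or by absorbing each back edge into an incident tree edge in the discrete setting) preserves this convergence.

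First I would invoke Skorokhod's representation theorem to arrange that the convergence in Corollary~\ref{cor:cvcolouredtree} holds almost surely, so in particular $N_m = N$ for all $m$ large enough and the rescaled marked reduced tree $(\sigma/m)^{1/2} T_m^{\mathrm{mk}}$ with its $1 + 2N$ marked points $(\rho, x_{i,m}, y_{i,m})_{i \le N}$ converges in the $(1+2N)$-pointed Gromov--Hausdorff topology to $\mathcal{T}_\sigma^{\mathrm{mk}}$ with the marked points $(\rho, x_i, y_i)_{i \le N}$. Since $\mathcal{T}_\sigma$ is almost surely binary (the encoding function $2\tilde{\mathbf{e}}^{(\sigma)}$ has unique local minima a.s., by absolute continuity with respect to a standard Brownian excursion), $\mathcal{T}_\sigma^{\mathrm{mk}}$ is also a.s.\ binary, and the points $\rho, x_i, y_i, x_i \wedge x_j$ are a.s.\ pairwise distinct (the $y_i$ are uniform on line segments, hence a.s.\ avoid the countable collection of branch points and other selected vertices). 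Proposition~\ref{prop:cvtreesG} then applies, yielding $d_{\vec{\mathcal{G}}}$ convergence of the marked reduced trees: for $m$ large enough, the natural bijection on the selected vertices extends uniquely to a graph isomorphism between the MDMs $(\sigma/m)^{1/2} T_m^{\mathrm{mk}}$ and $\mathcal{T}_\sigma^{\mathrm{mk}}$ (with vertex set $\{\rho\} \cup \{x_i, y_i\} \cup \{x_i \wedge x_j\}$), and all edge lengths converge.

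Second, I would explicitly match the graph structures of $X_m^*$ and $\mathcal{M}_\sigma$. In $\mathcal{M}_\sigma$, we quotient $\mathcal{T}_\sigma^{\mathrm{mk}}$ by identifying $x_i$ with $y_i$; the vertex $x_i$ then no longer appears and every edge of $\mathcal{T}_\sigma^{\mathrm{mk}}$ that ended at the leaf $x_i$ now ends at $y_i$. In $X_m^*$, each leaf $x_{i,m}$ of $T_m^{\mathrm{mk}}$ has exactly one incoming tree edge and one outgoing back edge to $y_{i,m}$, hence degree 2, and so gets absorbed when passing to the MDM: the tree edge into $x_{i,m}$ and the back edge $(x_{i,m}, y_{i,m})$ merge into a single directed edge ending at $y_{i,m}$. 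Using the graph isomorphism from the previous step and the a.s.\ genericity of the limit points (so that no $y_i$ equals a branch point or another $y_j$), the merged structures on both sides have the same vertex set $\{\rho\} \cup \{y_{i,m}\} \cup \{x_{i,m} \wedge x_{j,m}\}$ (resp.\ $\{\rho\} \cup \{y_i\} \cup \{x_i \wedge x_j\}$) and the same oriented edge set.

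Third, I would compare edge lengths under the resulting graph isomorphism. Each edge of $\mathcal{M}_\sigma$ is the image of a path in $\mathcal{T}_\sigma^{\mathrm{mk}}$ between two selected vertices; the corresponding edge of $X_m^*$ is either exactly such a rescaled tree path (length converging by the $d_{\vec{\mathcal{G}}}$ convergence of the reduced trees) or such a path with the single back edge $(x_{i,m}, y_{i,m})$ appended, in which case its rescaled length differs from that of the corresponding edge in $\mathcal{M}_\sigma$ by at most
\[
\varepsilon_m + (\sigma/m)^{1/2},
\]
where $\varepsilon_m$ is the $d_{\vec{\mathcal{G}}}$-error of the reduced-tree convergence and $(\sigma/m)^{1/2}$ is the rescaled length of the single back edge. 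Both terms tend to $0$ a.s., so the supremum over edges of the length discrepancy also tends to $0$, giving $d_{\vec{\mathcal{G}}}((\sigma/m)^{1/2} X_m^*, \mathcal{M}_\sigma) \to 0$ a.s., hence in distribution.

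The main obstacle is the combinatorial bookkeeping in the second step: one must check that the absorption of each leaf $x_{i,m}$ in $X_m^*$ matches exactly the identification $x_i \sim y_i$ in $\mathcal{M}_\sigma$, and that no spurious coincidences (e.g.\ two marked points landing at the same branch point, or a $y_{i,m}$ coinciding with another selected vertex) disturb the graph structure for large $m$. This reduces to the a.s.\ genericity of the configuration $(x_i, y_i, x_i \wedge x_j)_{i,j \le N}$ in $\mathcal{T}_\sigma^{\mathrm{mk}}$, which follows from the binarity of $\mathcal{T}_\sigma$ together with the fact that each $y_i$ is uniform on a one-dimensional segment independently of the already-chosen points.
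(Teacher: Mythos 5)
Your proposal is correct and follows essentially the same route as the paper: almost-sure pointed Gromov--Hausdorff convergence of the marked reduced trees (Corollary~\ref{cor:cvcolouredtree}) upgraded to $d_{\G}$ convergence via Proposition~\ref{prop:cvtreesG} using the a.s.\ binarity/genericity of $\T_{\sigma}^{\mathrm{mk}}$, then matching the graph structures of $X_m^*$ and $\mathcal{M}_{\sigma}$ by absorbing each degree-2 leaf $x_{i,m}$ into $y_{i,m}$ (stable for large $m$ since $y_{i,m}\to y_i$ stays on the corresponding edge), and finally deducing edge-length convergence. Your explicit bookkeeping of the rescaled length $(\sigma/m)^{1/2}$ of the appended back edge is a minor point the paper leaves implicit, but the argument is the same.
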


\begin{proof}
Using Skorokhod's representation theorem, we may assume that the convergence of Proposition~\ref{prop:cvcolouredtree} holds almost surely. Recall that, by Proposition~\ref{prop:cvtreesG}, $((\frac{\sigma}{m})^{1/2}T^{\mathrm{mk}}_m, \rho, (x_{i,m},i \le N_m))$ converges in $\G$ (taking the root, $(x_{i,m})$ and branch points as vertices). In particular the elements of this sequence have the same underlying graph structure for all $m$ large enough.

For $m$ large enough, no $x_{i,m}$ is an ancestor of a $x_{j,m}$ or $y_{j,m},$ so the graph structure of $X_m^*$ can be obtained from that of $T_m^{\mathrm{mk}}$ by removing $x_{i,m}$ and instead connecting the edge ending in $x_{i,m}$ back into $y_{i,m}$, for each $i$. Since $y_{i,m}$ converges to $y_i$ in the Gromov--Hausdorff sense it will, in particular, always be on the same edge of $T_m^{\mathrm{mk}}$ for $m$ sufficiently large. Thus the combinatorial structure is constant for $m$ large, and the same as that of $\mathcal{M}_{\sigma}.$

Once we know the combinatorial structure, the lengths of all the edges then also converge since they can be expressed in terms of the distances between the root, the $(x_{i,m})$ and the $(y_{i,m}).$

Using also Propositions~\ref{prop:graphtocc} and~\ref{prop:differentlengths}, we then obtain that the connected components of $\left(\frac{\sigma}{m}\right)^{1/2}X_m^*,$ listed in decreasing order of size, converge in the sense of $d_{\G}$ to those of $\mathcal{M}_{\sigma},$ listed in decreasing order of length.
\end{proof}

\subsubsection{Surplus edges do not contribute}
As mentioned earlier, we now want to prove that the surplus edges contribute to the strongly connected components of $X_m$ with vanishingly small probability. Specifically, we aim to prove the following proposition.
\begin{prop}\label{prop:surplus} $\pr\left[X_m\text{ and } X^*_m \text{ have different strongly connected components}\right] \to 0$ as $m \to \infty$.
\end{prop}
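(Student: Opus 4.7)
The plan is to prove that, with probability tending to $1$ as $m \to \infty$, no surplus edge of $X_m$ lies on a directed cycle. Since adding edges to a directed graph can only merge strongly connected components (never split them), this would imply that surplus edges are irrelevant to the SCC structure of $X_m$. Combined with Proposition~\ref{prop:starcomponents}, which lets us discard unmarked back edges without changing SCCs, we would conclude that $X_m$ and $X_m^*$ have the same strongly connected components with high probability. The strategy is to bound the expected number of ``bad'' surplus edges---those lying on some directed cycle of $X_m$---and apply Markov's inequality.

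The key structural observation is that surplus edges increase the planar order of $T_m$ while back edges decrease it, so any directed cycle containing a surplus edge $(u,v)$ (with $u$ before $v$) must also contain at least one back edge to return from $v$ to $u$. I would first handle the simplest case: cycles using exactly one surplus edge $(u,v)$ and one back edge $(x,y)$, joined by two tree paths---one going down from $v$ to some descendant $x \in T_v$, and one going down from $y$ (necessarily an ancestor of $u$) to $u$. Conditional on $T_m$, surplus and back edges are independent $\mathrm{Bernoulli}(p)$, so the expected number of such 2-edge cycle configurations equals
\begin{equation*}
p^2 \cdot \E\left[\sum_{\substack{\{u,v\} \text{ permitted}\\u < v}} |T_v| \cdot (d(\rho,u)+1)\right].
\end{equation*}

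The critical step is to show that this expectation is $O(m^{5/2})$. Since $p^2 = O(n^{-2}) = O(m^{-3})$ under the scaling $m \sim \sigma n^{2/3}$, this would yield an expected count of $O(m^{-1/2}) = o(1)$. Heuristically, $m^{5/2}$ arises because the number of permitted pairs in $T_m$ is $O(m^{3/2})$, the typical subtree size $|T_v|$ is $O(m^{1/2})$, and the typical depth $d(\rho,u)$ is $O(m^{1/2})$. To make this rigorous, I would use the convergence of $T_m$'s rescaled height function to $2\tilde{\exc}^{(\sigma)}$ (from the discussion preceding Section~\ref{sec:onetree}), rewriting the rescaled sum as an integral functional of the continuum excursion and invoking integrability of the relevant excursion functionals. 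Cycles using more back or surplus edges contribute extra factors of $p$ while their combinatorial counts grow only polynomially in $m$; their contributions are therefore subdominant, and summing over all cycle types remains $o(1)$.

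The main obstacle lies in the $O(m^{5/2})$ bound on the expected sum. A direct worst-case substitution ($|T_v| \leq m$, $d(\rho,u) \leq \|T_m\| = O(m^{1/2})$) yields only $O(m^3)$, giving a trivial $O(1)$ bound on the expected number of bad cycles. One must therefore exploit typical tree structure more carefully, either via the Brownian scaling limit or through direct moment computations on tree functionals. Additional care is needed to handle the size-biasing $(1-p)^{-a(T_m)}$ applied to $T_m$, though this bias is naturally absorbed by the tilting that defines $\tilde{\exc}^{(\sigma)}$.
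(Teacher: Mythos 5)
Your reduction (show that with high probability no surplus edge lies on a directed cycle) is fine, and your $k=1$ computation is sound; in fact the step you flag as the ``main obstacle'' is not one. Since the subtrees rooted at the vertices on the stack are pairwise disjoint, for each fixed tail $u$ we have $\sum_{v:(u,v)\text{ permitted}}|T_v|\le m$ deterministically, so
\[
\E\Bigl[\sum_{(u,v)\text{ permitted}}|T_v|\bigl(d(\rho,u)+1\bigr)\Bigr]\;\le\; m\,\E\Bigl[\sum_u \bigl(d(\rho,u)+1\bigr)\Bigr]\;\le\; m^2\,\E\bigl[\|T_m\|+1\bigr]=O(m^{5/2}),
\]
directly from the height moment bound of Lemma~\ref{lem:boundsheightarea} (which already incorporates the tilt); no scaling-limit machinery is needed. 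The genuine gap is your last sentence, where cycles using more non-tree edges are dismissed as ``subdominant''. A cycle through a surplus edge $(u,v)$ may return to $u$ via a \emph{chain} of back edges, and only the last back edge of the chain is forced to point to an ancestor of $u$; the intermediate heads $y_i$ are essentially unconstrained, and the next tail must lie in $T_{y_i}$. Thus each additional back edge costs a factor $p$ but gains a combinatorial factor of order $\sum_y|T_y|\le m(\|T_m\|+1)\asymp m^{3/2}$, i.e.\ the per-edge ratio is $\asymp p\,m\|T_m\|\asymp\sigma^{3/2}\|T_m\|/\sqrt m$, which is of constant order (and exceeds $1$ for large $\sigma$) rather than an extra $o(1)$. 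Each fixed number $k$ of back edges contributes $O_k(m^{-1/2})$, but the constants grow at least geometrically in $k$, so the sum over cycle types does not follow from the bounds you have and the naive union bound diverges for fixed large $\sigma$. This is not a technicality: the expected size of the subtree below the head of a surplus edge is of order $m^{1/2}$ (its law is tight but heavy-tailed), so first-moment counts of such configurations are genuinely borderline here.

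The paper avoids summing over cycle types altogether. It proves (Lemmas~\ref{lem:gwsubtree} and~\ref{lem:notilttightness}, transferred to the tilted tree by Cauchy--Schwarz and the $L^2$-boundedness of $(1-p)^{-a(\mathsf{T}_m)}$) that the \emph{total} number $\sum_{i\le R(m)}W_i(m)$ of descendants of surplus-edge heads is tight, using the local limit (Kesten's tree) and the Markov branching property; and it observes that any cycle through a surplus edge must use a back edge whose tail descends from some surplus-edge head, so on the event $\sum_i W_i(m)\le K$ the probability that any such back edge exists is at most $1-(1-p)^{mK}\to 0$. That single estimate handles all cycle structures simultaneously. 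To repair your argument you would need a comparable structural input — e.g.\ that the tail of the first back edge after the surplus edge already lies in an $O_{\pr}(1)$-sized subtree — which essentially amounts to the paper's proof.
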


Let $R(m)$ be the number of surplus edges in $X_m$.  For $1 \le i \le R(m)$, let $\alpha_{i,m}$ and $\beta_{i,m}$ be the tail and head respectively of the $i$-th surplus edge in increasing planar order of their tails. Let $W_i(m)$ be the number of vertices descending from $\beta_{i,m}$ in $T_m$. Proposition~\ref{prop:surplus} will follow if we can establish that the family $\left(\sum_{i=1}^{R(m)}W_i(m),m\in\N \right)$ is tight, namely if
\begin{equation}\label{eq:tightness}
\lim_{K \to \infty} \limsup_{m \to \infty} \; \pr\left[\sum_{i=1}^{R(m)} W_i(m) >  K \right]=0.
\end{equation} 
Indeed, for a strongly connected component of $X_m$ to feature a surplus edge, we need at least one back edge to originate from a descendant of some $\beta_{i,m}$ (since any surplus edge in a strongly connected component is part of a cycle and must thus lead to a back edge after following tree edges or further surplus edges). By Proposition~\ref{prop:coupling}, conditionally on $\sum_{i=1}^{R(m)} W_i(m) \leq K$, the probability of this event is smaller than the probability that a $\mathrm{Bin}(mK,p)$ variable is non-zero. Assuming (\ref{eq:tightness}) and fixing $\veps>0$, we may find a $K$ sufficiently large that 
\[ 
\limsup_{m \to \infty} \;\pr\left[\sum_{i=1}^{R(m)} W_i(m) > K\right]\leq \veps/3, \] and $m$ large enough such that $\pr\left[\sum_{i=1}^{R(m)} W_i(m) \geq K\right]\leq \veps/2$ and $1-(1-p)^{mK}\leq \veps/2$ (recall that $p\sim \sigma^{3/2}m^{-3/2}$ ). Then
\begin{align*}
& \pr\left[X_m\text{ and } X^*_m \text{ have different strongly connected components}\right] \\
& \qquad \leq\pr\left[\sum_{i=1}^{R(m)} W_i(m) \geq K\right] + 1-(1-p)^{mK}  \leq\frac{\veps}{2}+\frac{\veps}{2} = \veps.
\end{align*}

As we have already mentioned, it is shown in \cite{A-BBG12}  that $T_m$ is a biased version of the uniform labelled tree $\mathsf{T_m}$ on $[m]$ (with a canonical planar embedding): for non-negative measurable test functions $f$,
\begin{equation}\label{eq:defbias}
\E[f(T_m)]=\frac{\E[(1-p)^{-a(\mathsf{T}_m)}f(\mathsf{T}_m)]}{\E[(1-p)^{-a(\mathsf{T}_m)}]}.
\end{equation}
We recall that $a(T)$ denotes the the number of surplus edges permitted by the planar structure of a tree $T$, called its \emph{area} in \cite{A-BBG12}. We know from Theorem 12 and Lemma 14 of \cite{A-BBG12} that 
\[
(1-p)^{-a(\mathsf{T}_m)} \cv e^{\int_0^{\sigma}\exc^{(\sigma)}(t)\mathrm dt},
\]
and that the sequence on the left-hand side is bounded in $L^2$. We will prove (\ref{eq:tightness}) by first showing the analogous statement for $\mathsf{T}_m$ (this is Lemma~\ref{lem:notilttightness} below) and then using the measure change. We need the following lemma, which makes use of Kesten's tree, that is the tree $\widehat{\mathsf{T}}$ consisting of a copy of $\Z_{+}$ (the \emph{spine}), at each point of which we graft an independent Galton--Watson tree with Poisson(1) offspring distribution.  We root the resulting infinite tree at 0.  (This is the local weak limit of $\mathsf{T}_m$ \cite{Grimmett}.)

\begin{lem} \label{lem:gwsubtree} 
\begin{itemize}
\item[(i)] Let $\mathsf{Y}(m)$ be the number of vertices of $\mathsf{T}_m$ which lie outside the largest subtree descending from a child of the root. Then 
\[\mathsf{Y}(m) \underset{m\to\infty}{\cv } \mathsf{Y},\]
where $\mathsf{Y}$ is the number of vertices of $\widehat{\mathsf{T}}$ which have no ancestors on the spine apart from the root.

\item[(ii)] Write $(v_i,i\in\{1,\ldots,m\})$ for the vertices of $\mathsf{T}_m$ in planar order. For $v\in\mathsf{T}_m,$ let $\mathsf{Z}_v(m)$  be the number of vertices in the subtree rooted at $v$.  Let $\mathsf{Y}_v(m)$ be the number of such vertices which lie outside the largest of the subtrees rooted at a child of $v.$ Then the $(\mathsf{Y}_{v_i}(m),m\in\N,i\leq m)$ are tight:
\[
\lim_{M \to \infty} \limsup_{m \to \infty} \sup_{1 \le i \le m}  \pr[\mathsf{Y}_{v_i}(m)>M]=0.
\]
\end{itemize}
\end{lem}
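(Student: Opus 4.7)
For part (i), the plan is to identify $\mathsf{T}_m$, viewed as an unlabelled plane tree (using the canonical embedding by children in increasing label order), with a Galton--Watson tree with $\mathrm{Poisson}(1)$ offspring distribution conditioned on total progeny $m$. This identification is classical. I would then invoke the standard local weak convergence of such conditioned critical Galton--Watson trees to Kesten's tree $\widehat{\mathsf{T}}$; in particular the root's degree $D_m$ converges in distribution to $1+\mathrm{Poisson}(1)$, matching the $1+\mathrm{Poisson}(1)$ children of the root in $\widehat{\mathsf{T}}$ (one on the spine, the rest off-spine).

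The heart of part (i) is the ``one big subtree'' decomposition: conditional on $D_m=k$, the sizes $(|T_1|,\ldots,|T_k|)$ of the root's subtrees sum to $m-1$ and have joint distribution proportional to $\prod_j \pr[|\tau|=|T_j|]$ where $\tau$ is an unconditioned $\mathrm{Poisson}(1)$ Galton--Watson tree. Since $\pr[|\tau|=s]\sim c\,s^{-3/2}$, this heavy-tailed convolution exhibits the classical ``one big jump'' phenomenon: one of the $|T_j|$ is $m-O_{\pr}(1)$, and the remaining $k-1$ sizes converge jointly to the sizes of independent unconditioned copies of $\tau$. Granted this, $\mathsf{Y}(m)=1+\sum_{j\neq j^*}|T_j|$ (with $j^*$ the index of the largest subtree) converges in distribution to $1+\sum_{j=1}^K|\tau_j|$ with $K\sim\mathrm{Poisson}(1)$ and i.i.d.\ copies $\tau_j$ of $\tau$; this is exactly $\mathsf{Y}$ by construction of $\widehat{\mathsf{T}}$.

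For part (ii), the key step is a conditional uniformity property: for any vertex $v\in[n]$, conditional on the descendant set $A$ of $v$ in $\mathsf{T}_n$, the induced subtree on $A$ (rooted at $v$) is uniform over labelled trees on $A$ rooted at $v$. I would verify this by a direct Cayley/Prüfer count---the number of rooted labelled trees on $[n]$ rooted at $1$ whose descendant set at $v$ equals $A$ factorises as $|A|^{|A|-2}$ (for the internal subtree) times a count depending only on $[n]\setminus A$ (where $v$ is attached as a leaf), so each of the $|A|^{|A|-2}$ internal configurations is equally likely. Moreover, the event $\{v_i=v\}$ depends only on the outer tree (it is determined by the ancestors of $v$ and the subtrees rooted at their earlier-in-planar-order cousins, all of which lie outside $\mathsf{T}_n^{(v)}$), so conditionally on $|\mathsf{T}_n^{(v_i)}|=s$ the subtree $\mathsf{T}_n^{(v_i)}$ is a uniform labelled tree on $s$ vertices and hence $\mathsf{Y}_{v_i}(n)$ has the law of $\mathsf{Y}(s)$ from part (i). Thus
\[
\sup_{n\in\N}\;\sup_{1\leq i\leq n}\pr[\mathsf{Y}_{v_i}(n)>M]\;\leq\;\sup_{s\geq 1}\pr[\mathsf{Y}(s)>M],
\]
and this last supremum tends to $0$ as $M\to\infty$ by a standard ``diagonal'' argument combining the convergence $\mathsf{Y}(s)\cv\mathsf{Y}$ (with $\mathsf{Y}$ a.s.\ finite) from part (i) with the trivial deterministic bound $\mathsf{Y}(s)\leq s$: given $\veps>0$, pick $M_0$ with $\pr[\mathsf{Y}>M_0]<\veps/2$ and $s_0$ with $\pr[\mathsf{Y}(s)>M_0]<\veps$ for $s\geq s_0$, then take $M=\max(M_0,s_0)$.

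The main obstacle is the ``one big subtree'' decomposition in part (i), which requires either a careful appeal to the literature on size-biased spine decompositions of conditioned critical Galton--Watson trees or a self-contained proof using the $s^{-3/2}$ tail asymptotics of $|\tau|$. Part (ii) is then a routine application of conditional uniformity combined with the tightness-of-converging-sequences trick, and should be straightforward once part (i) is established.
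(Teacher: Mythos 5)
Your proposal is correct and follows essentially the same route as the paper: part (i) rests on the convergence of the non-largest root subtrees of the conditioned Poisson(1) Galton--Watson tree to the off-spine part of Kesten's tree (which the paper obtains by citing the literature, and you by the one-big-jump decomposition that you would likewise cite or prove from the $s^{-3/2}$ tail), and part (ii) reduces $\mathsf{Y}_{v_i}(n)$, conditionally on the subtree size, to a mixture of the laws of $(\mathsf{Y}(s),s\in\N)$ and concludes by tightness of a convergent sequence. The only cosmetic difference is that you verify the conditional self-similarity by a direct Cayley/Pr\"ufer count and a check that $\{v_i=v\}$ is determined by the outer tree, where the paper simply invokes the Markov branching property of conditioned Galton--Watson trees.
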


\begin{proof} Let $\mathsf{T}_1(m),\ldots,\mathsf{T}_D(m)$ be the subtrees of $\mathsf{T}_m$ rooted at its first generation, with $D$ being the degree of the root, listed in decreasing order of size, and define $\widehat{\mathsf{T}}_i$ similarly (noting that $\widehat{\mathsf{T}}_1$ is infinite). It is well-known that $\mathsf{T}_m$ is a Galton-Watson tree with Poisson(1) offspring distribution, conditioned to have $m$ vertices and assigned a uniformly random labelling from $[m]$. Knowing this, it is shown within the proof of Proposition 5.2 in \cite{pagnard2017} that $(|\mathsf{T}_2(m)|,\ldots |\mathsf{T}_D(m)|,0,\ldots)$ converges in distribution to $(|\widehat{\mathsf{T}}_2|,\ldots,|\widehat{\mathsf{T}}_D|,0,\ldots).$ Since the sum of the latter is a.s.\ finite, this also implies convergence of the sum.

Part $(ii)$ follows from the fact that, for all $i$ and $m$, the conditional distribution of $\mathsf{Y}_{v_i}(m)$ given $\mathsf{Z}_{v_i}(m)$ is the same as that of $\mathsf{Y}(\mathsf{Z}_{v_i}(m)).$ (This is an aspect of the \emph{Markov branching property} of conditioned Galton-Watson trees, see \cite{HM12}.) Hence, we can write, for $M>0$
\begin{align*}
\pr[\mathsf{Y}_{v_i}(m)>M]&= \mathbb{E}[\pr[\mathsf{Y}(Z_{v_i}))>M]\mid Z_{v_i}] \\
						  &\leq \sup_{k\in\N} \pr[\mathsf{Y}(k)>M]
\end{align*} 
Since the distributions of $(\mathsf{Y}(k),k\in\N)$ form a tight sequence, the above upper bound tends to $0$ as $M$ tends to infinity.
\end{proof}

Now add to the tree $\mathsf{T}_m$ each of the $a(\mathsf{T}_m)$ permitted surplus edges independently with probability $p$.  Conditionally on $a(\mathsf{T}_m)$ this yields a $\mathrm{Bin}(a(\mathsf{T}_m),p)$ number of surplus edges, for which we write $\mathsf{R}(m)$. Write the tails and heads of these surplus edges as $\mathsf{a}_{i,m}$ and $\mathsf{b}_{i,m}$ respectively, listed in increasing planar order of $\mathsf{a}_{i,m}$, for $i\leq \mathsf{R}(m)$.  We also write $\mathsf{b}_{i,m}^-$ for the parent of $\mathsf{b}_{i,m}$ in $\mathsf{T}_m$.  Let $\mathsf{W}_i(m)$ be the number of descendants of $\mathsf{b}_{i,m}$. The following lemma is a version of (\ref{eq:tightness}) for $\mathsf{T}_m$.

\begin{lem}\label{lem:notilttightness}
\[
\lim_{K \to \infty} \limsup_{m \to \infty} \; \pr\left[\sum_{i=1}^{\mathsf{R}(m)} \mathsf{W}_i(m) > K\right]=0.\]
\end{lem}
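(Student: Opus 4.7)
\bigskip

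The plan is to express $\sum_{i=1}^{\mathsf{R}(m)}\mathsf{W}_i(m)$ as a product of two tight quantities and control the maximum via Lemma~\ref{lem:gwsubtree}(ii). Conditionally on $\mathsf{T}_m$ we have $\mathsf{R}(m)\sim \mathrm{Bin}(a(\mathsf{T}_m),p)$, and since Theorem~12 of~\cite{A-BBG12} gives that $p\,a(\mathsf{T}_m)$ is bounded in $L^2$, $\mathsf{R}(m)$ is tight. Using $\sum_i \mathsf{W}_i(m)\leq \mathsf{R}(m)\cdot \max_i \mathsf{W}_i(m)$, it is therefore enough to prove
\[
\lim_{M\to\infty}\ \limsup_{m\to\infty}\ \pr\!\left[\exists\, i\leq \mathsf{R}(m):\ \mathsf{W}_i(m)>M\right]=0.
\]

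The structural observation driving the estimate is that if $(\mathsf{a}_{i,m},\mathsf{b}_{i,m})$ is a surplus edge then, by the very definition of a permitted edge, $\mathsf{a}_{i,m}$ lies in the subtree rooted at some older planar sibling $w$ of $\mathsf{b}_{i,m}$ among the children of $\mathsf{b}_{i,m}^-$. From here I would split in two cases. If $\mathsf{b}_{i,m}$ is \emph{not} the subtree-largest child of $\mathsf{b}_{i,m}^-$, then $|T_{\mathsf{b}_{i,m}}|$ is no larger than the second-largest child subtree, which is in turn bounded by $\mathsf{Y}_{\mathsf{b}_{i,m}^-}(m)$; Lemma~\ref{lem:gwsubtree}(ii) together with a union bound over the $O_p(1)$ surplus edges then take care of this case. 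If instead $\mathsf{b}_{i,m}$ \emph{is} the largest child of $\mathsf{b}_{i,m}^-$, the older sibling $w$ is forced to be a non-largest child, so the number of admissible tails $u$ is at most $\mathsf{Y}_{\mathsf{b}_{i,m}^-}(m)$, and the expected number of surplus edges of this type whose head subtree exceeds $M$ is dominated by
\[
p\,\E\!\left[\sum_{v^-\,:\,\max_{c}|T_c|>M}\mathsf{Y}_{v^-}(m)\right].
\]

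The main obstacle is showing that this last expression goes to $0$ as $M\to\infty$, uniformly in $m$. A direct moment computation shows that $\E\!\left[\sum_i \mathsf{W}_i(m)\right]$ actually diverges like $m^{1/2}$ --- the divergence being driven by rare events in which an enormous subtree appears as a head --- so Markov's inequality on the whole sum is useless, and one has to balance the \emph{small probability} that a large subtree is selected as a head against its \emph{large size}. Concretely, the total mass $p\sum_{v^-}\mathsf{Y}_{v^-}$ is $O_p(1)$ (since $\sum_{v^-}\mathsf{Y}_{v^-}\le \sum_v |T_v|=O_p(m^{3/2})$ and $p\sim m^{-3/2}$), and the task is to show that the contribution to this sum from vertices sitting above a subtree of size exceeding $M$ tends to $0$ as $M\to\infty$, uniformly in $m$. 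This localisation of the $\mathsf{Y}$-mass at the ``top'' of the conditioned Galton--Watson tree, using the uniform tightness from Lemma~\ref{lem:gwsubtree}(ii), is the technical crux of the proof.
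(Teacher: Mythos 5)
Your reduction to finitely many surplus edges and your first case are sound and essentially match the paper: tightness of $\mathsf{R}(m)$ lets you work with at most $I$ surplus edges, and whenever the head $\mathsf{b}_{i,m}$ is not the largest child of $\mathsf{b}_{i,m}^-$ you correctly get $\mathsf{W}_i(m)\leq \mathsf{Y}_{\mathsf{b}_{i,m}^-}(m)$ and conclude via Lemma~\ref{lem:gwsubtree}(ii). The genuine gap is your second case, where $\mathsf{b}_{i,m}$ \emph{is} the largest child: you write down the first-moment bound $p\,\E\bigl[\sum_{v^-:\,\max_c|T_c|>M}\mathsf{Y}_{v^-}(m)\bigr]$, identify its control as ``the technical crux'', and then stop — no argument is given that it vanishes. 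Moreover, the tool you point to, the uniform tightness of Lemma~\ref{lem:gwsubtree}(ii), cannot by itself deliver this: tightness gives no moment control, and indeed the distributional limit $\mathsf{Y}$ has infinite mean (the non-spine subtrees in Kesten's tree are critical Galton--Watson trees, whose total progeny has infinite expectation), so $\E[\mathsf{Y}_v(m)]$ for vertices carrying large subtrees grows with the subtree size. To push your route through you would need genuinely quantitative inputs — e.g.\ that the expected number of vertices with subtree size at least $k$ is $O(m/\sqrt{k})$ and that $\E[\mathsf{Y}(j)]=O(\sqrt{j})$ — which are plausible but are nowhere established in your sketch and go beyond anything the paper's Lemma~\ref{lem:gwsubtree} provides.

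The paper avoids this expectation computation entirely by conditioning on the position of the \emph{tail} rather than the rank of the head. If $\mathsf{a}_{i,m}$ lies in the largest child subtree of $\mathsf{b}_{i,m}^-$, then $\mathsf{b}_{i,m}$ is not the largest child and one is back in your first case. If $\mathsf{a}_{i,m}$ lies outside it, then the whole path from $\mathsf{a}_{i,m}$ up to its ancestor-sibling of $\mathsf{b}_{i,m}$ is counted in $\mathsf{Y}_{\mathsf{b}_{i,m}^-}(m)$, forcing $\mathsf{Y}_{\mathsf{b}_{i,m}^-}(m)\geq d(\mathsf{a}_{i,m},\mathsf{b}_{i,m})$; since $m^{-1/2}d(\mathsf{a}_{i,m},\mathsf{b}_{i,m})$ converges in distribution (adapting Lemma~19 of~\cite{A-BBG12}) while $\mathsf{Y}_{\mathsf{b}_{i,m}^-}(m)$ is tight, this event has vanishing probability, and a union bound over the at most $I$ surplus edges finishes the proof. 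Note the asymmetry your split misses: in the paper's ``bad'' case the tight quantity $\mathsf{Y}$ must exceed a quantity of order $m^{1/2}$, so only tightness at finitely many locations is needed — no estimate on $\mathsf{Y}$-mass summed over the whole tree. If you want to salvage your version, the cleanest fix is to observe that in your second case the tail also lies outside the largest child subtree, so the same inequality $\mathsf{Y}_{\mathsf{b}_{i,m}^-}(m)\geq d(\mathsf{a}_{i,m},\mathsf{b}_{i,m})$ applies and you can conclude as the paper does, rather than trying to control the expectation you wrote down.
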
 
\begin{proof} Fix $\veps > 0$. In Lemma 19 of \cite{A-BBG12}, it is proved that $R(m)$ converges in distribution as $m \to \infty$. An identical argument shows that $\mathsf{R}(m)$ converges in distribution as $m \to \infty$ and, in particular, is tight. Therefore, there exists $I>0$ such that $\pr[\mathsf{R}(m)>I]<\frac{\veps}{2}$ for all $m.$  Moreover,
\begin{align*}
\pr \left[\sum_{i=1}^{\mathsf{R}(m)} \mathsf{W}_i(m)>K \right] &\leq \pr \left[\mathsf{R}(m)>I \right] + \pr\left[\mathsf{R}(m)\leq I , \sum_{i=1}^{\mathsf{R}(m)} \mathsf{W}_i(m)>K \right] \\
                              &\leq \frac{\veps}{2} + \pr \left[\sum_{i=1}^{\mathsf{R}(m) \wedge I} \mathsf{W}_i(m)>K \right].
\end{align*}
We then split the event where $\sum_{i=1}^{\mathsf{R}(m) \wedge I} \mathsf{W}_i(m)>K$ in two: either, for all $i\leq \mathsf{R}(m) \wedge I$, the vertex $\mathsf{a}_{i,m}$ lies in the largest of the subtrees rooted at the children of $\mathsf{b}_{i,m}^-$, in which case we also have $\sum_{i=1}^{\mathsf{R}(m) \wedge I} \mathsf{Y}_{\mathsf{b}^-_{i,m}}(m)>K$, or there exists $i$ for which $\mathsf{a}_{i,m}$ is \emph{not} in this largest subtree, which then implies, in particular, that $\mathsf{Y}_{\mathsf{b}^-_{i,m}}(m)\geq d(\mathsf{a}_{i,m},\mathsf{b}_{i,m}).$ This leads to
\begin{align*}
& \pr\left[\sum_{i=1}^{\mathsf{R}(m)} \mathsf{W}_i(m)> K\right] \\
& \qquad \leq \frac{\veps}{2} + \pr\left[\sum_{i=1}^{\mathsf{R}(m) \wedge I} \mathsf{Y}_{\mathsf{b}^-_{i,m}}(m)>K
\right] + \pr\left[\exists i \le \mathsf{R}(m) \wedge I: \mathsf{Y}_{\mathsf{b}^-_{i,m}}(m)\geq d(\mathsf{a}_{i,m},\mathsf{b}_{i,m})\right].
\end{align*}
By Lemma~\ref{lem:gwsubtree}, the $(Y_{\mathsf{b}_{i,m}^{-}}(m),i\leq \mathsf{R}(m))$ are tight as $m \to \infty$, and thus so is the sum of at most $I$ of them:
\[
\pr\left[\sum_{i=1}^{\mathsf{R}(m) \wedge I} \mathsf{Y}_{\mathsf{b}^-_{i,m}}(m)>K\right]  \le \frac{\veps}{4}
\]
for all $m$, for $K$ large enough. For the final term, we may again adapt the argument from Lemma 19 of \cite{A-BBG12} to see that for each $i$, $m^{-1/2}d(\mathsf{a}_{i,m},\mathsf{b}_{i,m})$ converges in distribution, where $d$ denotes the graph distance in $\mathsf{T}_m.$ In particular there exists $\eta>0$ such that $\pr[d(\mathsf{a}_{i,m},\mathsf{b}_{i,m}) \leq m^{1/2}\eta]\leq \frac{\veps}{8I}.$ We then have
\[\pr \left[i\leq \mathsf{R}(m),\mathsf{Y}_{\mathsf{b}^-_{i,m}}(m)\geq d(\mathsf{a}_{i,m},\mathsf{b}_{i,m}) \right]\leq \frac{\veps}{8I} + \pr \left[i\leq \mathsf{R}(m),\mathsf{Y}_{\mathsf{b}^-_{i,m}}(m)\geq m^{1/2}\eta \right],\]
and by Lemma~\ref{lem:gwsubtree} again, $\pr[i\leq \mathsf{R}(m),\mathsf{Y}_{\mathsf{b}^-_{i,m}}(m)\geq m^{1/2}\eta] < \veps/8I$ for all $m$ sufficiently large, so that for such $m$, 
\[
\pr \left[\exists i\leq \mathsf{R}(m) \wedge I :\; \mathsf{Y}_{\mathsf{b}^-_{i,m}}(m) \geq d(\mathsf{a}_{i,m},\mathsf{b}_{i,m}) \right] \leq \frac{\veps}{4}.
\]
Combining all the terms yields
\[
\limsup_{m \to \infty} \pr\left[\sum_{i=1}^{\mathsf{R}(m)} W_i(m)>K\right]\leq \veps. \qedhere
\]
\end{proof}

\begin{proof}[Proof of Proposition~\ref{prop:surplus}] 
It remains to show that (\ref{eq:tightness}) holds. We use the change of measure to pass from $\mathsf{T}_m$ to $T_m$. Call $A(m,K)$ the event where $\sum_{i=1}^{R(m)} W_i(m)>K$ and $\mathsf{A}(m,K)$ the event where $\sum_{i=1}^{\mathsf{R}(m)} \mathsf{W}_i(m) >K$. Then we have
\[\pr[A(m,K)]= \frac{\E[(1-p)^{-a(\mathsf{T}_m)}\mathbf{1}_{\mathsf{A}(m,K)}]}{\E[(1-p)^{-a(\mathsf{T}_m)}]}
         \leq \frac{\sqrt{\E[(1-p)^{-2a(\mathsf{T}_m)}]}}{\E[(1-p)^{-a(\mathsf{T}_m)}]}\sqrt{\pr[\mathsf{A}(m,K)]}.\]
We know that $\E[(1-p)^{-2a(\mathsf{T}_m)}]$ is bounded and that $\E[(1-p)^{-a(\mathsf{T}_m)}]$ converges to a positive limit.  So by Lemma~\ref{lem:notilttightness}, we obtain
\[
\lim_{K \to \infty} \limsup_{m \to \infty} \pr[A(m,K)] = 0,
\]
as required.
\end{proof}

\subsection{Proof of Theorem~\ref{thm:main}}

We first prove that the convergence in Theorem~\ref{thm:main} occurs in the weaker product topology, namely that for any $k \in \N$,
\[
n^{-1/3} (C_1(n), C_2(n), \ldots, C_k(n)) \cv (\mathcal{C}_1, \mathcal{C}_2, \ldots, \mathcal{C}_k)
\]
with respect to $d_{\G}^k$. We will later improve this to a convergence with respect to $d$.

\subsubsection{Convergence in the product topology}

Let $(T_1^n,T_2^n,\ldots)$ be the forward exploration trees of $\vec{G}(n,p).$ We list them in decreasing order of their sizes $(Z_1^n,Z_2^n,\ldots)$, and recall that we write $(\| T_1^n\|, \|T_2^n\|,\ldots)$ for their heights. We also let $(Y_1^n,Y_2^n,\ldots)$ be the subgraphs of $\vec{G}(n,p)$ induced by the vertex-sets of these trees (which include both surplus and back edges).
By \cite{aldous1997}, we have the following convergence for the $\ell^2$ topology on sequences:
\begin{equation}\label{eq:cvaldous}
n^{-2/3}(Z_i^n,i \in \N) \cv  (\sigma_i,i\in\N),
\end{equation}
where $(\sigma_i,i\in\N)$ are the excursion lengths of $W_{\lambda}$ above its running infimum, sorted decreasingly.
Again, using Skorokhod's theorem, we may work on a probability space for which this convergence occurs almost surely. Moreover, conditionally on $(Z_1^n, Z_2^n, \ldots)$, the $(Y_i^n,i\in\N)$ are independent, each having the distribution of $X_{Z_n^i}$ as in Section~\ref{sec:onetree}.  Since $Z_n^i p^{2/3} \to \sigma_i$, we have that the rescaled strongly connected components of $Y_i^n$ converge in distribution to those of $\mathcal{M}_{\sigma_i}$, and this holds jointly for any finite set of indices $i$. Taking into account Proposition~\ref{prop:graphtocc}, the following proposition will give the convergence in Theorem~\ref{thm:main} for the product topology.

\begin{prop}\label{prop:aa} For all $k\in\N,$ we have 
\[\underset{K\to\infty}\lim \, \pr[\forall i\leq k,\exists j\leq K:\;\mathcal{C}_i\subseteq \D_j]=1\]
and,
\[\underset{K\to\infty}\lim\, \underset{n\to\infty}\liminf \, \pr[\forall i\leq k,\exists j\leq K:\; C_i(n) \subseteq Y_j^n]=1.\]
\end{prop}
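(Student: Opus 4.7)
Writing $\mathrm{len}(X)$ for the total edge length of an MDM $X$, I will set
\[
L_N := \sup_{j>N}\max\{\mathrm{len}(C) : C\text{ is a strongly connected component of }\mathcal{D}_j\},
\]
with the convention that the inner maximum is $0$ when $\mathcal{D}_j$ has no such component. Since $\mathcal{C}_1,\mathcal{C}_2,\ldots$ are listed by decreasing length, the event $\{\forall i\leq k,\exists j\leq N:\mathcal{C}_i\subseteq\mathcal{D}_j\}$ contains $\{L_N<\mathrm{len}(\mathcal{C}_k)\}$, so the continuum claim reduces to proving (a)~$\mathrm{len}(\mathcal{C}_k)>0$ a.s., and (b)~$L_N\to 0$ a.s. Point~(a) is immediate from the theorem in Section~\ref{sec:scalinglimit}: $\mathcal{D}$ a.s.\ contains infinitely many loops, and each loop arising from an ancestral identification $(x_i,y_i)$ with $y_i$ uniform on $\llbracket\rho,x_i\rrbracket$ has length $(1-U)f(s_i)$, hence is strictly positive almost surely. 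For (b), denoting by $T_j$ the total edge length of the strongly connected components of $\mathcal{D}_j$, I would establish $\E[T_j\mid\sigma_j]=O(\sigma_j^2)$: combined with $\E[\sum_j\sigma_j^2]<\infty$ (Proposition~\ref{prop:excursionlengths}(i)), this gives $\sum_j T_j<\infty$ a.s., and hence $L_N\leq\sum_{j>N}T_j\to 0$ a.s.

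\noindent\textbf{The key estimate $\E[T_j\mid\sigma_j]=O(\sigma_j^2)$.} Every strongly connected component of $\mathcal{D}_j$ lies in the marked subtree $\mathcal{T}_{\sigma_j}^{\mathrm{mk}}$, so $T_j\leq(N_{\sigma_j}^a+N_{\sigma_j}^b)\|f_j\|$ with $f_j=2\tilde{\exc}^{(\sigma_j)}$. Following Proposition~\ref{prop:Poissonbounds}, I would split the contribution. On the single-loop event $\{N^a=1,N^b=0\}$, conditioning on $f_j$ and using $\pr[N^a\geq 1\mid f_j]\leq\int_0^{\sigma_j}f_j(t)\,\mathrm{d}t$ bounds this contribution by $\E\!\left[\|f_j\|\int_0^{\sigma_j}f_j(t)\,\mathrm{d}t\right]$; by the tilted-law scaling $\|f_j\|=O(\sigma_j^{1/2})$ and $\int f_j=O(\sigma_j^{3/2})$, this is of order $\sigma_j^2$. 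On the complementary event $\{N^a\geq 2\text{ or }N^b\geq 1\}$, whose probability is $O(\sigma_j^3)$ by parts (iii)-(iv) of that proposition, Cauchy--Schwarz against the polynomial moments of $\|f_j\|$ and $N^a+N^b$ gives a bound of strictly smaller order than $\sigma_j^2$.

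\noindent\textbf{The discrete statement and the main obstacle.} The same scheme handles the discrete case. Set $L_N^n:=\max_{j>N}\max\{|C|:C\text{ is a strongly connected component of }X_j^n\}$. The discrete analogue of the estimate above follows from the back-edge structure of Section~\ref{sec:discretebackedges}: in a tree of size $m\sim\sigma n^{2/3}$ the mean number of ancestral back edges is of order $m^{3/2}p\sim\sigma^{3/2}$, each spanning an ancestral path of size $O(\sqrt{m})$, so the expected total size of SCCs in $X_j^n$ is of order $(Z_j^n)^2 p$, i.e.\ $(Z_j^n/n^{2/3})^2\,n^{1/3}$. Summing and invoking the $\ell^2$-convergence $(n^{-2/3}Z_j^n)_j\to(\sigma_j)_j$ of~\cite{aldous1997}, one obtains $\limsup_{n\to\infty}\E[L_N^n/n^{1/3}]=O\!\left(\E\!\left[\sum_{j>N}\sigma_j^2\right]\right)\to 0$ as $N\to\infty$. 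The matching lower bound $|C_k(n)|/n^{1/3}\gtrsim\mathrm{len}(\mathcal{C}_k)$ in distribution follows from the product-topology convergence established just before this proposition, together with Proposition~\ref{prop:differentlengths} which ensures that the discrete ranking by size asymptotically agrees with the continuum ranking by length. Markov's inequality then concludes. The technical heart is the $O(\sigma_j^2)$ bound: since Proposition~\ref{prop:excursionlengths} tells us that $\sum_j\sigma_j^2<\infty$ but $\sum_j\sigma_j^{3/2}=\infty$, the exponent $2$ is exactly on the boundary of summability and cannot be relaxed; gaining the crucial extra factor of $\sigma^{1/2}$ over the naive bound ``$\pr[N^a\geq 1]=O(\sigma^{3/2})$'' by genuinely exploiting that loops in a short excursion have length bounded by the excursion height is the delicate point, and the companion discrete challenge is to carry the corresponding bookkeeping uniformly in $n$ via the biased-tree representation of Section~\ref{sec:onetree}.
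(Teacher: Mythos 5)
Your continuum half is essentially sound, and it is in fact a variant of what the paper does (though not in its proof of this proposition): the reduction to $\{L_N<\mathrm{len}(\mathcal{C}_k)\}$ is correct, $\mathrm{len}(\mathcal{C}_k)>0$ a.s.\ is fine, and your key estimate is, for small $\sigma$, exactly the bound $\E[N_{\sigma}^a\|\T_{\sigma}\|]\leq C\sigma^2$ which the paper proves later when controlling the $\ell^1$ tail (the paper's proof of the present proposition instead fixes a threshold $\veps$ and uses the $O(\sigma^3)$ probability bounds of Lemma~\ref{lem:smallcontinuous}; your $O(\sigma^2)$ expected-length bound summed against $\E\big[\sum_j\sigma_j^2\big]<\infty$ is an acceptable alternative, modulo writing out the second-moment control of $N^b_\sigma$ you invoke for Cauchy--Schwarz).

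The genuine gap is in the discrete half. Your claimed per-tree estimate ``the expected total size of SCCs in $X_j^n$ is of order $(Z_j^n)^2p$'' is justified only by counting ancestral back edges times the height, i.e.\ it accounts only for components that are single ancestral cycles. A strongly connected component of $X_m$ may instead use a surplus edge $(a,b)$ together with a back edge emanating from the subtree below $b$ (and then it contains vertices of that subtree), or use several back edges; these are precisely the configurations $B_m$, $C_m$, $D_m$ treated in Lemma~\ref{lem:smalldiscrete} and the reason Proposition~\ref{prop:surplus} requires the nontrivial tightness argument of Lemmas~\ref{lem:gwsubtree}--\ref{lem:notilttightness}. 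Such configurations are rare but can be large, so an expectation bound on the total SCC size does not follow from the argument you give and would require genuinely new moment estimates under the tilted law (e.g.\ joint moments of $a(T_m)$, subtree sizes and heights); the paper deliberately avoids this by bounding the \emph{probability} that a tree of size $m$ produces a complex component or one containing a surplus edge by $Cm^3/n^{2}$, summing this over the small trees using $\E\big[\sum_j\sigma_j^3\big]<\infty$, and only on the complementary event using $\E[A_m\|T_m\|]\leq Cm^2/n$ for the ancestral cycles. Two further points need care in your write-up: the lower bound on $|C_k(n)|$ must be derived from the joint convergence of the SCCs of $X_1^n,\ldots,X_N^n$ (which is available before this proposition), not from the convergence of the ranked components $C_i(n)$, which is a consequence of it; and passing from the distributional $\ell^2$ convergence of $(n^{-2/3}Z_j^n)_j$ to your bound $\limsup_n\E\big[\sum_{j>N}(Z_j^n)^2\big]/n^{4/3}=O\big(\E\big[\sum_{j>N}\sigma_j^2\big]\big)$ requires a uniform integrability or direct moment argument, not just (\ref{eq:cvaldous}).
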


Informally, Proposition~\ref{prop:aa} states that, with high probability, large strongly connected components of $\vec{G}(n,p)$ and $\mathcal{D}$ will only be found in large trees of the forward depth-first forest, making the ordering of both trees and strongly connected components by their lengths compatible. Its proof relies on two lemmas.

\begin{lem}\label{lem:smallcontinuous}
As $\sigma\to 0,$ we have
\begin{equation}\label{eq:smallcontinuous1}
\pr[\mathcal{M}_{\sigma}\text{ has a complex component}]= O(\sigma^3).\end{equation}
For all $\veps>0,$ we have as $\sigma\to 0$
\begin{equation}\label{eq:smallcontinuous2}
\pr[\|\T_{\sigma}\|\ge \veps] =O (\sigma^3).\end{equation}
Consequently, for all $\veps>0,$
\begin{equation}\label{eq:smallcontinuous3}
\pr\Big[\mathcal{M}_{\sigma}\text{ has a component with length greater than }\veps \Big]=  O(\sigma^3).\end{equation}
\end{lem}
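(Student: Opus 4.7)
The plan is to prove the three assertions in order, with the real work being in the excursion tail bound for (\ref{eq:smallcontinuous2}); the other two parts then follow almost for free.

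For (\ref{eq:smallcontinuous1}), I will observe that the first identification is always ancestral, so on $\{N_\sigma^a \leq 1\}\cap\{N_\sigma^b=0\}$ there is at most one identification (and it is ancestral), which makes every strongly connected component of $\mathcal{M}_\sigma$ a single loop; none is complex. Hence
\[
\{\mathcal{M}_\sigma\text{ has a complex component}\} \subseteq \{N_\sigma^a\geq 2\}\cup\{N_\sigma^b\geq 1\},
\]
and Proposition~\ref{prop:Poissonbounds}(iii) gives the $O(\sigma^3)$ bound directly.

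For (\ref{eq:smallcontinuous2}), I will use $\|\T_\sigma\|=2\|\tilde{\exc}^{(\sigma)}\|_\infty$ together with the Brownian scaling $\exc^{(\sigma)}\eqdist\sqrt{\sigma}\,\exc(\cdot/\sigma)$ for an unbiased excursion (writing $\mathcal{A}=\int_0^1\exc(u)\,\mathrm{d}u$) to rewrite
\[
\pr[\|\T_\sigma\|\geq\varepsilon] = \frac{\E\bigl[\mathbf{1}_{\{\sqrt{\sigma}\,\|\exc\|_\infty\geq\varepsilon/2\}}\,e^{\sigma^{3/2}\mathcal{A}}\bigr]}{\E[e^{\sigma^{3/2}\mathcal{A}}]}.
\]
The denominator tends to $1$ as $\sigma\to 0$. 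For the numerator I will apply Cauchy--Schwarz and Kennedy's sub-Gaussian tail bound on $\|\exc\|_\infty$ \cite{kennedy76}: this bounds the factor $\sqrt{\E[e^{2\sigma^{3/2}\|\exc\|_\infty}]}$ uniformly for small $\sigma$ and forces $\sqrt{\pr[\|\exc\|_\infty\geq\varepsilon/(2\sqrt{\sigma})]}$ to be of order $\exp(-c\varepsilon^2/\sigma)$ for some $c>0$, which is far smaller than any polynomial in $\sigma$.

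For (\ref{eq:smallcontinuous3}), I will split according to whether $\mathcal{M}_\sigma$ has a complex component. The complex case is controlled by (\ref{eq:smallcontinuous1}). Otherwise every strongly connected component is a single cycle arising from one ancestral identification $(x_i,y_i)$, whose length equals the tree distance $d(y_i,x_i)\leq\|\T_\sigma\|$. A component of length exceeding $\varepsilon$ therefore forces $\|\T_\sigma\|>\varepsilon$, and (\ref{eq:smallcontinuous2}) closes the argument. The only nontrivial input is the sub-Gaussian estimate used in (\ref{eq:smallcontinuous2}); parts (\ref{eq:smallcontinuous1}) and (\ref{eq:smallcontinuous3}) are structural consequences of it and of Proposition~\ref{prop:Poissonbounds}.
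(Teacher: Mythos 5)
Your treatments of (\ref{eq:smallcontinuous1}) and (\ref{eq:smallcontinuous2}) are correct and essentially the paper's own: for (\ref{eq:smallcontinuous1}) you bound the probability by $\pr[N_\sigma^a\geq 2 \text{ or } N_\sigma^b\geq 1]$ and invoke Proposition~\ref{prop:Poissonbounds}(iii), exactly as in the paper; for (\ref{eq:smallcontinuous2}) the paper bounds the indicator by $e^{\sup\exc-\veps/(2\sqrt{\sigma})}$ and uses $\E[e^{2\sup\exc}]<\infty$, whereas you use Cauchy--Schwarz together with Kennedy's sub-Gaussian tail; both routes give a bound decaying faster than any power of $\sigma$, so this is an immaterial variation.

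The step that fails is in your deduction of (\ref{eq:smallcontinuous3}): the absence of a complex component does \emph{not} imply that every strongly connected component is a single cycle arising from one ancestral identification. A component can be a cycle passing through two (or more) identifications: for instance $y_i$ an ancestor of $x_j$ but not of $x_i$, and $y_j$ an ancestor of $x_i$ but not of $x_j$, with nothing else attached to the resulting cycle. After suppressing degree-2 vertices this component is a loop (hence not complex), but its length is $d(y_i,x_j)+d(y_j,x_i)$, which can exceed $\|\T_\sigma\|$; so on your ``otherwise'' event the inclusion $\{\text{some component has length}>\veps\}\subseteq\{\|\T_\sigma\|>\veps\}$ is false. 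The repair is immediate and uses the very event you already exploited in part (\ref{eq:smallcontinuous1}): decompose on $\{N_\sigma^a\leq 1,\,N_\sigma^b=0\}$ rather than on complexity. On that event there is at most one identification and it is ancestral, so every strongly connected component is a single ancestral loop of length at most $\|\T_\sigma\|$, and (\ref{eq:smallcontinuous2}) applies; the complementary event has probability $O(\sigma^3)$ by Proposition~\ref{prop:Poissonbounds}(iii). With this modification your argument is complete and coincides with the paper's intended one.
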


\begin{lem}\label{lem:smalldiscrete}
There exists $C>0$ such that, for all $n$ large enough and $1\le m\le n^{2/3},$
\begin{equation}\label{eq:smalldiscrete1}\pr\Big[X_m\text{ has a complex component}\Big]\leq C\frac{m^3}{n^{2}}\end{equation}
and
\begin{equation}\label{eq:smalldiscrete15}
\pr\Big[X_m\text{ has a component which contains a surplus edge}\Big]\leq C\frac{m^3}{n^{2}}.\end{equation}
Moreover, for all $\veps>0,$ there exists $C>0$ such that, for $n$ large enough, and $1\leq m\leq n^{2/3},$
\begin{equation}\label{eq:smalldiscrete2}
\pr[\|T_m\| \geq n^{1/3}\veps]\leq C\frac{m^2}{n^{4/3}}.\end{equation}
Consequently, for all $\veps>0,$ there exists $C>0$ such that, for $n$ large enough,
\begin{equation}\label{eq:smalldiscrete3}\pr\Big[X_{m}\text{ has a component with length greater than }n^{1/3}\veps \Big]\leq C\frac{m^2}{n^{4/3}}.\end{equation}
\end{lem}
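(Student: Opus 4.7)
My plan is to reduce all four bounds to moment computations for the uniform random labelled tree $\mathsf{T}_m$, using the change of measure (\ref{eq:defbias}). The $L^2$-boundedness of $(1-p)^{-a(\mathsf{T}_m)}$ (Lemma 14 of \cite{A-BBG12}) combined with Cauchy--Schwarz gives
\[
\E_{T_m}[F] \leq C\sqrt{\E_{\mathsf{T}_m}[F^2]}
\]
for any non-negative functional $F$, uniformly in $m \le n^{2/3}$. Standard moment estimates for conditioned Poisson$(1)$ Galton--Watson trees then provide $\E_{\mathsf{T}_m}[\|T\|^k] = O(m^{k/2})$, $\E_{\mathsf{T}_m}[a(T)^k] = O(m^{3k/2})$, and $\E_{\mathsf{T}_m}[(\sum_v h_v)^k] = O(m^{3k/2})$ for any fixed $k$, and hence the same bounds (up to constants) for $T_m$.

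Bound (\ref{eq:smalldiscrete2}) then follows from Markov applied to $\|T_m\|^4$. For (\ref{eq:smalldiscrete1}), I would condition on $T_m$: by Proposition \ref{prop:coupling} the back edges and surplus edges of $X_m$ are independent Bernoulli$(p)$ given the tree, and a complex SCC requires at least two ``extra'' edges in compatible positions within a single SCC. A union bound gives
\[
\pr[\text{complex} \mid T_m] \le p^2\left[\binom{\sum_v h_v}{2} + a(T_m)\sum_v h_v\right] + \text{higher order},
\]
where the higher-order terms come from non-ancestral back edges (which, by the marking procedure of Section \ref{sec:discretebackedges}, must lie within the subtree already reached by an ancestral back edge) or from configurations using more edges; each contributes $p^k \cdot O(m^{3k/2}) \le O(m^3/n^2)$ in the regime $m \le n^{2/3}$. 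Taking expectations via Cauchy--Schwarz on the leading terms then gives $O(p^2 m^3) = O(m^3/n^2)$. For (\ref{eq:smalldiscrete15}), any SCC containing a surplus edge $(a,b)$ must close with at least one back edge $(v,w)$ in a configuration compatible with a cycle through $(a,b)$; in the simplest case $v \in \mathrm{desc}(b)$ and $w \in \mathrm{anc}(a) \cup \{a\}$, leaving at most $W_b(h_a+1) \le m(\|T_m\|+1)$ compatible back edges per permitted surplus. Summing and using $\E_{T_m}[a(T_m)\|T_m\|] \le \sqrt{\E_{T_m}[a(T_m)^2]\E_{T_m}[\|T_m\|^2]} = O(m^2)$ yields $\pr[\text{SCC contains surplus}] = O(p^2 m^3) = O(m^3/n^2)$, with higher-order configurations again bounded by $p^k \cdot O(m^{3k/2})$. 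Finally, (\ref{eq:smalldiscrete3}) follows by the decomposition: a component of length $> n^{1/3}\veps$ is either a loop SCC with a single back edge and tree-only return (so $\|T_m\| > n^{1/3}\veps - 1$, controlled by (\ref{eq:smalldiscrete2})), or an SCC containing at least two back edges or a surplus (controlled by the arguments behind (\ref{eq:smalldiscrete1}) and (\ref{eq:smalldiscrete15}), noting that loops with $\ge 2$ back edges share the same $p^2$-counting as complex components). Since $m \le n^{2/3}$ gives $m^3/n^2 \le m^2/n^{4/3}$, the combined bound is $O(m^2/n^{4/3})$.

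The main obstacle will be the tight counting needed for (\ref{eq:smalldiscrete15}): the naive union bound $\pr[\ge 1 \text{ surplus and } \ge 1 \text{ back}] \le p^2 \binom{m}{2} a(T_m)$ yields only $O(m^{7/2}/n^2)$, too weak by a factor of $m^{1/2}$. The improvement comes from genuinely exploiting that the back edge's position is geometrically constrained relative to the surplus edge so as to close a cycle, cutting the back-edge count from $\binom{m}{2}$ down to $m\|T_m\|$.
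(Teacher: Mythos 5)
Your overall strategy is the paper's: reduce to the four two-edge ``witness'' configurations (two ancestral back edges; one ancestral back edge plus a back edge pointing into the cycle it creates; two surplus edges; one surplus edge $(a,b)$ plus a back edge whose head is a weak ancestor of $a$), bound each conditional probability by $p^2$ times a count of order $m\|T_m\|$ or $a(T_m)$ per constrained edge, and finish with second/fourth moment bounds for $\|T_m\|$ and $a(T_m)$ (your Cauchy--Schwarz transfer from $\mathsf{T}_m$ is a harmless variant of the paper's Lemma~\ref{lem:boundsheightarea}, which gets the same bounds for the tilted tree directly from Lemmas 14 and 25 of \cite{A-BBG12}). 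The bounds (\ref{eq:smalldiscrete2}) and (\ref{eq:smalldiscrete3}) are handled as in the paper, and you correctly identify the crucial geometric constraint that rescues the surplus-edge count in (\ref{eq:smalldiscrete15}).

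The genuine gap is your treatment of everything beyond the two-edge configurations: the claim that ``higher-order configurations'' each contribute $p^k\cdot O(m^{3k/2})$ is unjustified and, read as a counting statement, false. The product $p^km^{3k/2}$ only arises if \emph{every} extra edge in the configuration is constrained to $O(m^{3/2})$ admissible positions, but intermediate back edges on a return path from $b$ to $a$ are essentially unconstrained ($\asymp m^2$ potential positions each): for instance, one surplus edge plus two back edges, of which only the last is constrained, gives $p^3\,a(T_m)\cdot m^2\cdot m\|T_m\|\asymp m^5/n^3$, which at $m=n^{2/3}$ is of order $n^{1/3}$, far above $m^3/n^2$. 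The correct (and short) fix is not to count the extra edges at all, but to show that every offending configuration already contains one of the four two-edge witnesses, so its probability is dominated by the bounds you have: if there are at least two surplus edges, use the $p^2\E[a(T_m)^2]$ count; if there is exactly one surplus edge $(a,b)$ in the component, then on any directed path from $b$ back to $a$ only tree edges can follow the last back edge, so that back edge's head is a weak ancestor of $a$ \emph{whatever} the number of back edges used -- no ``simplest case'' restriction and no further counting is needed; and if there are no surplus edges and exactly one ancestral back edge $(x_1,y_1)$, a similar argument (the vertex $y_1$ would otherwise have in-degree $\ge 2$ in the component) forces any additional back edge making the component complex, or making a cycle of length exceeding $\|T_m\|+1$, to have its head on the segment $\llbracket y_1,x_1\rrbracket$, giving the $m\|T_m\|$ count. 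This containment argument is exactly how the paper's events $A_m,B_m,C_m,D_m$ absorb all multi-edge configurations; without it your union bound over ``configurations using more edges'' does not close. Relatedly, for (\ref{eq:smalldiscrete3}) your phrase ``loops with $\ge 2$ back edges share the same $p^2$-counting'' needs this same containment to be made precise, since such cycles can have length up to about $2\|T_m\|$ and are not covered by (\ref{eq:smalldiscrete2}) alone.
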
 
Note that for both of these lemmas, the final statement is a consequence of the previous ones by noticing that any component consisting of a single ancestral cycle has length smaller than the height of the tree (plus one in the discrete case).

\begin{proof}[Proof of Lemma~\ref{lem:smallcontinuous}]
By Proposition~\ref{prop:Poissonbounds},
\[
\pr\Big[\mathcal{M}_{\sigma}\text { has a complex component}\Big] \leq \pr\big[N_{\sigma}^a\geq 2 \text{ or } N_{\sigma}^b\geq 1] =O(\sigma^3).
\]
Recalling that the height $\|\T_{\sigma}\|$ has the same distribution as $\sup 2\tilde{\exc}^{(\sigma)}$ and that it has exponential moments \cite{kennedy76}, we have, for $\sigma<1,$
\begin{align*}
\pr[\|\T_{\sigma}\| > \veps]&= \frac{\E\left[\mathbf{1}_{\{\sup \mathbf{e}\geq \veps/2\sqrt{\sigma}\}} \exp \left(\sigma^{3/2}\int_0^1\exc(x)dx \right) \right]}{\E\big[\exp \left( \sigma^{3/2}\int_0^1\exc(x)dx \right)\big]}\\
        &\le \E[e^{\sup \exc-\veps/2\sqrt{\sigma}}\,e^{\sigma^{3/2}\sup\exc}]\leq \E[e^{2\sup \mathbf{e}}]e^{-\veps/2\sqrt{\sigma}}=O(\sigma^3).
\end{align*}
Note we have used the bound $\mathbf{1}_{\{\sup \mathbf{e}\geq \veps/2\sqrt{\sigma}\}}\leq e^{\sup \exc-\veps/2\sqrt{\sigma}}$. This proves~(\ref{eq:smallcontinuous1}) and~(\ref{eq:smallcontinuous2}); (\ref{eq:smallcontinuous3}) then follows.
\end{proof}

For Lemma~\ref{lem:smalldiscrete}, we require some preliminary bounds on the height and area of $T_m$.

\begin{lem}\label{lem:boundsheightarea} There exists a constant $M>0$ such that, for all $n$ large enough such that $1/(2n)<p<2/n$ and all 
$1\leq m \leq n^{2/3},$
\begin{align}
\label{eq:heightbound} \E[\|T_m\|^4] & \leq M  m^2 \\
\intertext{and}
\label{eq:areabound} \E[(a(T_m))^2] & \leq M m^3.
\end{align}
\end{lem}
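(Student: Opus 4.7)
The plan is to push both estimates through the Radon--Nikodym formula (\ref{eq:defbias}) from $T_m$ to the uniform random labelled tree $\mathsf{T}_m$, which is a Poisson$(1)$ Galton--Watson tree conditioned on size $m$, and then to invoke classical moment bounds for such trees.

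For any non-negative functional $f$, the Cauchy--Schwarz inequality applied to the numerator of (\ref{eq:defbias}) gives
\[
\E[f(T_m)] \le \frac{\sqrt{\E[(1-p)^{-2a(\mathsf{T}_m)}]}\,\sqrt{\E[f(\mathsf{T}_m)^2]}}{\E[(1-p)^{-a(\mathsf{T}_m)}]}.
\]
The denominator is bounded below by $1$ trivially, and the first factor in the numerator is uniformly bounded over $1 \le m \le n^{2/3}$ in the critical regime $1/(2n)<p<2/n$: this is essentially the $L^2$-boundedness of $(1-p)^{-a(\mathsf{T}_m)}$ from \cite{A-BBG12} (Theorem~12 and Lemma~14 there), quoted earlier in this paper; one only has to note that for $m$ well below $n^{2/3}$ the tilt is even better controlled, because the relevant Laplace exponent $pa(\mathsf{T}_m)$ is then stochastically smaller. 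It therefore suffices to prove that $\E[\|\mathsf{T}_m\|^8] = O(m^4)$ and $\E[a(\mathsf{T}_m)^4] = O(m^6)$ with constants uniform in $m$, and then apply the display with $f = \|\cdot\|^4$ and $f = a(\cdot)^2$ respectively.

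The height bound is exactly the classical subgaussian concentration estimate for the height of a conditioned critical Galton--Watson tree with finite-variance offspring distribution (Addario-Berry, Devroye and Janson), which yields $\E[\|\mathsf{T}_m\|^k] = O(m^{k/2})$ for every $k$, and in particular the required bound for $k=8$. For the area, I would write $a(\mathsf{T}_m) = \sum_{i=0}^{m-1} (|\mathcal{O}_i|-1)$, where $(|\mathcal{O}_i|)$ is the Lukasiewicz walk of $\mathsf{T}_m$, and use the elementary bound $a(\mathsf{T}_m) \le m \max_i |\mathcal{O}_i|$ together with subgaussian tails for $\max_i |\mathcal{O}_i|/\sqrt{m}$ (whose distribution is, up to constants, of the same order as the height). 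This yields $\E[a(\mathsf{T}_m)^k] = O(m^{3k/2})$ for every $k$; the case $k=4$ closes the argument.

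The main obstacle is book-keeping rather than conceptual: one must confirm that the subgaussian moment bounds for $\|\mathsf{T}_m\|$ and $\max_i |\mathcal{O}_i|$ have constants that do not blow up for small $m$, and that the $L^2$-bound on the tilt factor holds uniformly in the entire range $1 \le m \le n^{2/3}$ and not only at the scale $m \sim \sigma n^{2/3}$ where the distributional convergence of the tilt is stated. Both are standard facts for Poisson$(1)$ Galton--Watson trees in the critical window, so no new ideas are needed.
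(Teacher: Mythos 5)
Your proposal is correct in outline, but it takes a genuinely different route from the paper. The paper does not use Cauchy--Schwarz at all: for the height bound it simply quotes Lemma~25 of \cite{A-BBG12}, which bounds $\E[\|T_m\|^4]$ for the \emph{tilted} tree directly (giving $M\max(m^6n^{-4},1)m^2$, which is $Mm^2$ once $m\le n^{2/3}$); for the area it runs a Chernoff/Markov argument on the change of measure at the modified parameter $q=\max(m^{-3/2},p)$, invoking Lemma~14 of \cite{A-BBG12} to get a tail bound $\pr[a(T_m)>xm^{3/2}]\le K e^{64\kappa-x/4}$ uniformly for $m\le n^{2/3}$, and then integrates. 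Your route instead pushes everything through Cauchy--Schwarz and reduces to moments of the untilted tree $\mathsf{T}_m$: the exponents match ($\E[\|\mathsf{T}_m\|^8]=O(m^4)$ and $\E[a(\mathsf{T}_m)^4]=O(m^6)$ give exactly $Mm^2$ and $Mm^3$), the denominator is indeed at least $1$, and the uniform $L^2$ bound on $(1-p)^{-a(\mathsf{T}_m)}$ over $1\le m\le n^{2/3}$, $p<2/n$ does follow from the same Lemma~14 (since $p\le q$ and $\delta=\max(2m^{3/2}/n,1)\le 2$ in this range), so your key inputs are all available. What the paper's version buys is a stronger (exponential) tail for $a(T_m)m^{-3/2}$ and no need for eighth-moment bounds; what yours buys is a single uniform mechanism for transferring any polynomial moment from $\mathsf{T}_m$ to $T_m$.

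One justification in your plan is not right as stated: you argue that $\max_i|\mathcal{O}_i|/\sqrt m$ has sub-Gaussian tails because its distribution is ``of the same order as the height''. There is no such pointwise or direct distributional comparison in general (a star has height $1$ but stack maximum of order $m$), so this needs its own argument. The needed fact is nonetheless true and standard for conditioned Poisson$(1)$ Galton--Watson trees, e.g.\ via the Lukasiewicz-path/bridge representation and maximal inequalities; alternatively, you can bypass the stack maximum entirely by noting that Lemma~14 of \cite{A-BBG12} (applied with tilt parameter $m^{-3/2}$) already gives uniform exponential moments of $a(\mathsf{T}_m)m^{-3/2}$, hence all polynomial moments $\E[a(\mathsf{T}_m)^k]=O(m^{3k/2})$, which is all your Cauchy--Schwarz step requires.
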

\begin{proof} Lemma 25 from \cite{A-BBG12} gives $\E[\|T_m\|^4]\leq M \cdot \max (m^6n^{-4},1) \cdot m^2$ for all $n$ large enough and $m\leq n,$ and restricting ourselves to $m\leq n^{2/3}$ yields (\ref{eq:heightbound}).

For (\ref{eq:areabound}), we follow the beginning of the proof of Lemma 25 from \cite{A-BBG12}. Let $q=\max(m^{-3/2},p)$.  Then (\ref{eq:defbias}) and Markov's inequality together yield
\begin{align*}
\pr[a(T_m)> xm^{3/2}]   &\leq \frac{\E[(1-q)^{-a(T_m)}]}{(1-q)^{-xm^{3/2}}} \leq \frac{\E[((1-p)(1-q))^{-a(\mathsf{T}_m)}]}{(1-q)^{-xm^{3/2}}} \leq \frac{\E[(1-q)^{-2a(\mathsf{T}_m)}]}{(1-q)^{-xm^{3/2}}}.
\end{align*}
From Lemma 14 in \cite{A-BBG12}, we obtain that $\E[(1-q)^{-2a(\mathsf{T}_m)}]\leq K \exp{4\kappa\delta^2}$ where $\delta=\max(2m^{3/2}/n,1).$ Since $qm^{3/2}\geq \delta/4$, we get
\[\pr[a(T_m)> xm^{3/2}]\leq Ke^{4\kappa\delta^2-x\delta/4},\]
and for $1\leq m\leq n^{2/3},$ we have $1\leq \delta \leq 2,$ so that
\[\pr[a(T_m)> xm^{3/2}]\leq Ke^{64\kappa-x/4}.\]
It follows that
\[
\E\left[\frac{(a(T_m))^2}{m^3} \right]\leq Ke^{64\kappa} \int_0^{\infty} e^{-\sqrt{x}/4}\mathrm d x = 32Ke^{64\kappa},
\]
which completes the proof.
\end{proof}
\begin{proof}[Proof of Lemma~\ref{lem:smalldiscrete}]
We take $n$ large enough for (\ref{eq:heightbound}) and (\ref{eq:areabound}) to hold, and $m\leq n^{2/3}.$
Notice first that~(\ref{eq:smalldiscrete2}) follows from (\ref{eq:heightbound}) and Markov's inequality:
\[\pr[\|T_m\|\geq n^{1/3}\veps]\leq \frac{\E[\|T_m\|^4]}{\veps^4n^{4/3}}\leq \frac{M m^2}{\veps^4n^{4/3}}.\]
We now want to show that the probability that $X_m$ contains a strongly connected component which is complex or features surplus edges is also bounded by $m^3n^{-2}$. Such a component can only arise if one of the following four events occurs:
\begin{align*}
A_m & = \{X_m \text{ has at least two ancestral back edges.}\} \\
B_m & =\{X_m \text{ has one ancestral back edge, and at least one other back edge which points} \\
& \qquad \text{ inside the created cycle.}\} \\
C_m & =\{X_m \text{ has at least two surplus edges.}\} \\
D_m & =\{X_m \text{ has one surplus edge $(a,b)$ and at least one back edge pointing to} \\
& \qquad \text{ an ancestor of }a.\}
\end{align*}
We will bound the probabilities of each of these events separately. 

Conditionally on the tree $T_m$, the number of ancestral back edges in $X_m$ has distribution $\mathrm{Bin}(S_m,p),$ where is $S_m$ the sum of the heights of all vertices in $T_m.$ By using the well-known stochastic domination of $\mathrm{Bin}(k,p)$ by $\mathrm{Poi}(-k\log(1-p))$ and the fact that $\pr[\mathrm{Poi}(\mu)\geq 2]\leq \mu^2$, we have
\[\pr[A_m\mid T_m]\leq (-S_m \log(1-p))^2 \leq M(S_m p)^2. \]
From now on, the constant $M$ can vary from line to line, but never depends on $n$ or $m.$

Since $S_m\leq m \|T_m\|,$ by using (\ref{eq:heightbound}) again, for $n$ large enough we end up with
\[
\pr[A_m ] \leq M\frac{m^2}{n^2}\E[\|T_m\|^2] \leq M\frac{m^3}{n^2}.
\]
Given that there is exactly one ancestral back edge in $X_m$, the number of back edges which point back into the cycle created is stochastically dominated by $\mathrm{Bin}(m\|T_m\|,p).$ Hence we have
\begin{align*}
\pr[B_m \mid T_m] &\leq p S_m (1-p)^{S_m-1}(1-(1-p)^{m\|T_m\|}) \\
                               &\leq M p S_m (m \|T_m\| \log(1-p)) \\
                               &=M n^{-2}(m \|T_m\|)^2.
\end{align*}
This is the same bound as above, thus leading to
\[
\pr[B_m]\leq M\frac{m^3}{n^2}
\]
Since the number of surplus edges has distribution $\mathrm{Bin}(a(T_m),p),$ we get $\pr[C_m \mid T_m] \leq M p^2 a(T_m)^2$ and 
\[\pr[C_m]\leq Mn^{-2}\E[a(T_m)^2].\] A similar argument as for $B_m$ also yields 
\[\pr[D_m]\leq M n^{-2}m \E[\|T_m\| a(T_m)]\leq Mn^{-2}m \sqrt{\E[\|T_m\|^2]}\sqrt{\E[a(T_m)^2]},\]
and an application of (\ref{eq:areabound}) concludes the proof.
\end{proof}

We can now prove the proposition.

\begin{proof}[Proof of Proposition~\ref{prop:aa}]
Fix $k\in\N$ and $\eta>0$, and let $\veps>0$ be small enough that 
\[
\pr[\mathrm{len}(\mathcal{C}_k) >\veps] > 1-\eta.
\]
By Lemmas~\ref{lem:smallcontinuous} and~\ref{lem:smalldiscrete}, there exists $C>0$ such that, for all $K\in\N,$
\[\pr\Big[\exists i> K: \mathcal{D}_{i}\text{ contains a component with length greater than }\veps \Big]\leq \pr[\sigma_{K+1}>1]+C\E\left[\sum_{i> K}\sigma_i^3\right]\]
and
\begin{align*}
& \pr\Big[\exists i> K: Y_i^n\text{ contains a component with length greater than }n^{1/3}\veps \Big] \\
& \qquad \leq \pr[Z_{K+1}^n > n^{2/3}] +C\E\left[\sum_{i> K}\frac{(Z_i^n)^2}{n^{4/3}}\right].
\end{align*}
 By Proposition~\ref{prop:excursionlengths} and (\ref{eq:cvaldous}), there exists $K$ sufficiently large that both of these are smaller than $\eta.$ Then
\[\pr[\mathcal{C}_1, \ldots, \mathcal{C}_k \text{ are in  }\mathcal{D}_1,\ldots,\mathcal{D}_K,  \ \mathrm{len}(\mathcal{C}_k) > \veps]\geq 1-2\eta.\]
From the fact that $(n^{-1/3}Y_1^n,\ldots,n^{-1/3}Y_K^n) \cv (\mathcal{D}_1,\ldots,\mathcal{D}_N)$, we deduce that, for $n$ greater than some $n_0\in\N$,
\[\pr[C_1(n), \ldots, C_k(n) \text{ are in }Y_1^n,\ldots,Y_K^n, \ \mathrm{len}(C_k(n)) > \veps n^{1/3}]\geq 1-3\eta\]
and hence
\[\pr[C_1(n),\ldots,C_k(n) \text{ are in } Y_1^n,\ldots, Y_K^n]\geq 1-4\eta. \qedhere
\]
\end{proof}

\subsubsection{Controlling the tail}
The proof of Theorem~\ref{thm:main} will be completed if we can show that, for all $\veps>0,$
\[\underset{k\to\infty}\lim \pr\left[\sum_{i=k+1}^{\infty} d_{\G}(\mathcal{C}_i,\mathfrak{L}) >\veps\right]=0\]
and 
\[\underset{k\to\infty}\lim\, \underset{n\to\infty}\limsup\, \pr\left[\sum_{i=k+1}^{\infty} d_{\G}(C_i(n),\mathfrak{L}) >n^{1/3}\veps\right]=0.\]

Fix $\eta>0.$ For $k\in \N,$ let $Q(k)$ be the largest integer such that 
\[
\pr \left[\text{all components in $\mathcal{D}_1,\ldots,\mathcal{D}_{Q(k)}$ have lengths exceeding $\mathrm{len}(\mathcal{C}_k)$} \right] > 1-\eta.
\]
Then by Proposition~\ref{prop:aa} and the convergence of $n^{-1/3} C_k(n)$ to $\mathcal{C}_k$, it also holds that, for $n$ large enough, all the components of $Y_1^n,\ldots,Y^n_{Q(k)}$ have lengths exceeding that of $C_k(n)$ with probability at least $1-2\eta.$ Thus we have
\[\pr\left[\sum_{i=k+1}^{\infty} d_{\G}(\mathcal{C}_i,\mathfrak{L}) >\veps\right]\leq \eta + \pr\left[\left(\sum_{i=Q(k)+1}^{\infty}\sum_{j: \, \mathcal{C}_j\subset \mathcal{D}_i}d_{\G}(\mathcal{C}_j,\mathfrak{L})\right) >\veps\right]\]
and similarly
\begin{align*}
& \pr\left[\sum_{i=k+1}^{\infty} d_{\G}(C_i(n),\mathfrak{L}) >n^{1/3}\veps\right] \\
& \qquad \leq 2\eta + \pr\left[\left(\sum_{i=Q(k)+1}^{\infty}\sum_{j: \, C_j(n)\subset X_i^n}d_{\G}(C_j(n),\mathfrak{L})\right) >n^{1/3}\veps\right].
\end{align*}
Note that $Q(k) \to \infty$ as $k \to \infty$: indeed, it is non-decreasing, and so if did possess a finite limit $Q$, then the probability of $\mathcal{D}_1,\ldots,\mathcal{D}_{Q+1}$ containing a smallest component of $\mathcal{C}$ would be at least $\eta,$ a contradiction since there is no smallest component. It is therefore enough to prove that
\[\underset{N\to\infty}\lim \pr\left[\sum_{i=N+1}^{\infty}\sum_{j: \, \mathcal{C}_j\subset \mathcal{D}_i} d_{\G}(\mathcal{C}_j,\mathfrak{L}) >\veps\right]=0\]
and
\[\underset{N\to\infty}\lim\, \underset{n\to\infty}\limsup\, \pr\left[\sum_{i=N+1}^{\infty} \sum_{j: \, C_j(n)\subset X_i^n} d_{\G}(C_j(n),\mathfrak{L}) >n^{1/3}\veps\right]=0.\]
However, by~(\ref{eq:smallcontinuous1}),~(\ref{eq:smalldiscrete1}) and~ (\ref{eq:smalldiscrete15}), for $N$ large enough, all the components contained in $(\mathcal{D}_i,i\ge N+1)$ are single ancestral cycles with probability at least $1-\eta,$ and for $n$ large enough, this also holds for those contained in $(Y_i^n,i\ge N+1)$. Noting that such components have length at most the height of the underlying tree (plus one in the discrete case), and that their number is at most the number of ancestral back edges, we are reduced to proving the following statements:
\begin{equation}\label{eq:smalldiamcont}
\underset{K\to\infty}\lim \pr\left[\sum_{i=K+1}^{\infty} N_{\sigma_i}^a\|\T_{\sigma_i}\| >\veps\right]=0
\end{equation}
and 
\begin{equation}\label{eq:smalldiamdisc}
\underset{K\to\infty}\lim\, \underset{n\to\infty}\limsup\, \pr\left[\sum_{i=K+1}^{\infty}A_i^n \|T_i^n\| >n^{1/3}\veps\right]=0,
\end{equation}
where $A_i^n$ is the number of ancestral back edges in $Y_i^n.$  These may be obtained using the following lemma.

\begin{lem}
\begin{itemize}
\item[$(i)$] There exists $C>0$ such that, for $\sigma<1,$
\[\E[N_{\sigma}^a \|\T_{\sigma}\|\,]\leq C \sigma^2.\]
\item[$(ii)$] There exists $C>0$ such that, for $n$ large enough, and $1\leq m\leq n^{2/3},$
\[\E\left[A_m \|T_m\|\,\right]\leq C \frac{m^2}{n},\]
where $A_m$ is the number of ancestral back edges in $X_m.$
\end{itemize}
\end{lem}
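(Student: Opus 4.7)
My plan is to handle the two parts independently; each reduces to a one-line conditioning argument followed by a moment bound that is already essentially available in the paper.

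For $(i)$, I would condition on $\tilde{\exc}^{(\sigma)}$. Since $N_{\sigma}^a$ is, given the excursion, Poisson with parameter $\int_0^{\sigma} 2\tilde{\exc}^{(\sigma)}(t)\,\mathrm d t$, and since $\|\T_{\sigma}\|=2\sup\tilde{\exc}^{(\sigma)}$ is a measurable function of the excursion, we have
\[
\E[N_{\sigma}^a\|\T_{\sigma}\|]
=4\,\E\!\left[\sup\tilde{\exc}^{(\sigma)}\cdot\int_0^{\sigma}\tilde{\exc}^{(\sigma)}(t)\,\mathrm d t\right].
\]
Then I would unwind the size-biased definition of $\tilde{\exc}^{(\sigma)}$ in terms of a standard excursion $\exc$ via the substitution $\tilde{\exc}^{(\sigma)}(\cdot)\stackrel{d}{=}\sqrt{\sigma}\exc(\cdot/\sigma)$ under the tilted law. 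A change of variables turns the above expectation into $\sigma^{2}F(\sigma^{3/2})^{-1}\E[M\mathcal{A}\exp(\sigma^{3/2}\mathcal{A})]$, with $M=\sup\exc$ and $\mathcal{A}=\int_0^{1}\exc$. For $\sigma<1$ we have $F(\sigma^{3/2})\geq F(0)=1$ and $\sigma^{3/2}\mathcal{A}\leq\mathcal{A}\leq M$, so the expectation is bounded by $\E[M^{2}e^{M}]$, which is finite by Kennedy's sub-Gaussian tail bound on $\sup\exc$ \cite{kennedy76}. This yields the claim with $C=4\E[M^{2}e^{M}]$.

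For $(ii)$, I would condition on $T_{m}$. Recall from the description of the coupling that each of the $S_{m}:=\sum_{v\in T_{m}} h(v)$ potential ancestral back edges is present independently with probability $p$, so $A_{m}\mid T_{m}\sim\mathrm{Bin}(S_{m},p)$ and hence $\E[A_{m}\mid T_{m}]=pS_{m}\leq p\,m\,\|T_{m}\|$. Therefore
\[
\E[A_{m}\|T_{m}\|]\;\leq\; p\,m\,\E[\|T_{m}\|^{2}]\;\leq\; p\,m\sqrt{\E[\|T_{m}\|^{4}]}\;\leq\; p\,m\cdot\sqrt{M}\,m
\]
by Cauchy--Schwarz and the height moment bound $(\ref{eq:heightbound})$ of Lemma \ref{lem:boundsheightarea}. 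Since $p\leq 2/n$ for $n$ large, this delivers $\E[A_{m}\|T_{m}\|]\leq 2\sqrt{M}\,m^{2}/n$, as required.

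Neither step presents a real obstacle: $(i)$ is a mechanical computation with the tilted excursion law and the only subtlety is to recognise that $F(\sigma^{3/2})^{-1}\exp(\sigma^{3/2}\mathcal{A})$ is uniformly controlled for $\sigma\leq 1$, while $(ii)$ is pure conditioning plus a moment bound already proved in the paper. If anything, the mildly delicate point in $(i)$ is the observation that one must use $\mathcal{A}\leq M$ (rather than trying to bound $\E[\mathcal{A}e^{\mathcal{A}}]$ directly via $F$), so that the remaining expectation reduces to a finite functional of $\sup\exc$ alone.
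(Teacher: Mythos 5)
Your proposal is correct and follows essentially the same route as the paper: for $(i)$, the paper likewise reduces to $4\sigma\,\E[\sup(\tilde{\exc}^{(\sigma)})^2]$, changes measure to the standard excursion, uses $F(\sigma^{3/2})\geq 1$ and $\sigma^{3/2}\mathcal{A}\leq \mathcal{A}\leq \sup\exc$, and concludes by finiteness of $\E[e^{\sup\exc}(\sup\exc)^2]$ via Kennedy's tail bound, which matches your bound up to the order in which $\int_0^\sigma\tilde\exc\leq\sigma\sup\tilde\exc$ (equivalently $\mathcal{A}\leq M$) is applied. For $(ii)$, the paper also conditions on $T_m$, dominates $A_m$ by a $\mathrm{Bin}(m\|T_m\|,p)$ count, and applies the fourth-moment height bound of Lemma~\ref{lem:boundsheightarea} exactly as in your Cauchy--Schwarz step.
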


\begin{proof} Part $(i)$ is straightforward: assuming $\|\T_{\sigma}\|$ and $N_{\sigma}^a$ are built from a tilted excursion $\tilde{\exc}^{(\sigma)},$ and remembering that $\E[N_{\sigma}^a\mid \tilde{\exc}^{(\sigma)}] = \int_0^\sigma 2\tilde{\exc}^{(\sigma)}(t)\mathrm dt$, we have
\begin{align*}
\E[N_{\sigma}^a \|\T_{\sigma}\|\,]&=\E \left[\sup 2\tilde{\exc}^{(\sigma)}\int_0^{\sigma}2\tilde{\exc}^{(\sigma)}(t)\mathrm dt \right] \\
&\leq 4\sigma\E[\sup (\tilde{\exc}^{(\sigma)})^2] \\
 &\leq 4\sigma\frac{\E\left[\sup (\sqrt{\sigma}\exc)^2  \exp \left(\sigma^{3/2}\int_0^1\exc(t)dt  \right) \right]}{\E\big[\exp \left( \sigma^{3/2}\int_0^1\exc(t)dt \right)\big]} \\
 &\leq 4\sigma^2 \E \left[e^{\int_0^{1}\exc(t)\mathrm dt}\sup (\exc)^2 \right],
\end{align*}
the latter expectation being finite (see the proof of Lemma~\ref{lem:smallcontinuous}).
For part $(ii),$ recall that, conditionally on $T_m$ the distribution of $A_m$ is stochastically dominated by $\mathrm{Bin}(m \|T_m\|,p).$ Thus, we have
\[\E[A_m \|T_m\|]\leq p\E[m \|T_m\|^2],\]
and applying Lemma~\ref{lem:boundsheightarea} concludes the proof.
\end{proof}

We leave the straightforward adaptation of the arguments used for Proposition~\ref{prop:aa} to prove (\ref{eq:smalldiamcont}) and (\ref{eq:smalldiamdisc}) to the reader.  This completes the proof of Theorem~\ref{thm:main}.

\section{Further properties of the scaling limit} \label{sec:furtherprops}
We write $\mathcal{C}$ for the list of strongly connected components of $\mathcal{D},$ and $\mathcal{C}_{\sigma}$ for that of $\mathcal{M}_{\sigma},$ in decreasing order of length. Let also $\mathcal{C}_{\mathrm{cplx}}$ be the list of \emph{complex} components of $\mathcal{C},$ i.e.\ those that are not cycles, also in decreasing order of length. We have not yet been able to find the exact distribution of $\mathcal{C}$ and $\mathcal{C}_{\sigma}$ for $\sigma>0$: this will be the subject of future research. However, we show here that $\mathcal{C}_{\sigma}$ and  $\mathcal{C}_{\mathrm{cplx}}$ both have a positive probability of being equal to any appropriate fixed family of directed multigraphs.

For sequences $(G_1,\ldots,G_k)$ and $(H_1,\ldots,H_j)$ of directed multigraphs, we write $(G_1,\ldots,G_k)\equiv (H_1,\ldots,H_j)$ if $j=k$ and $G_i$ is isomorphic to $H_i$ for each $i\leq j.$ We extend this notation naturally to the case where  one or both of the sequences has edge lengths by simply ignoring the edge lengths.

\begin{prop}\label{prop:fullsupport} Let $G_1,\ldots,G_k$ be a finite sequence consisting of $3$-regular strongly connected directed multigraphs or loops. We have
\[\pr[\mathcal{C}_{\sigma}\equiv(G_1,\ldots,G_k)]>0.\]
Assuming that $G_1,\ldots,G_k$ are all complex, we also have
\[\pr[\mathcal{C}_{\mathrm{cplx}}\equiv (G_1,\ldots,G_k)]>0.\]
Let $(e_i,1\leq i\leq K)$ be an arbitrary ordering of the edges of $(G_1,\ldots,G_k).$ Then, conditionally on $\mathcal{C}_{\sigma}\equiv(G_1,\ldots,G_k)$ (resp. $\mathcal{C}_{\mathrm{cplx}}\equiv (G_1,\ldots,G_k)$), $\mathcal{C}_{\sigma}$ (resp. $\mathcal{C}_{\mathrm{cplx}})$ gives lengths $(\ell(e_i),1\leq i\leq K)$ to these edges, and their joint distribution has full support in 
\[\left\{\mathbf{x}=(x_1,\ldots,x_K)\in\R_+^K: \forall\, 1\leq i\leq k-1, \sum_{j:e_j\in E(G_i)}x_j \geq \sum_{j:e_j\in E(G_{i+1})}x_j\right\}.\]
\end{prop}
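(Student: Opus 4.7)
The plan has three interlocking pieces: establishing positivity for a single excursion, lifting this to the multi-excursion complex-component statement, and finally handling the full-support claim for edge lengths.

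\textbf{Part 1: Single-excursion positivity.} Fix $\sigma>0$ and a target sequence $(G_1,\ldots,G_k)$. First I exhibit an explicit \emph{template}: a continuous excursion function $f_0$ on $[0,\sigma]$ with unique local minima, together with times $0<s_1^0<\cdots<s_N^0<\sigma$ and ancestor choices $y_i^0$ whose construction via Section~\ref{sec:continuousbackedges} yields an MDM $\mathcal{M}_\sigma^0$ whose ordered strongly connected components are isomorphic to $(G_1,\ldots,G_k)$. To build the template, I use the fact that each 3-regular strongly connected directed multigraph (or loop) admits a depth-first-search representation as a plane tree decorated by back edges (some ancestral, some not). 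I concatenate these $k$ decorated trees along a root-spine, spaced out so that the excursion returns close to zero between them, and pick $f_0$ to encode this plane tree. The times $s_i^0$ correspond to the tails of the back edges and the $y_i^0$ to their heads.

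\textbf{Part 2: Openness and absolute continuity.} Next I argue that any excursion $f$ sufficiently $C^0$-close to $f_0$, together with any times $(s_i)$ and ancestor choices $(y_i)$ sufficiently close (in the natural tree metric) to $(s_i^0,y_i^0)$, produces an MDM graph-isomorphic to $\mathcal{M}_\sigma^0$ with edge lengths depending continuously on the data. The law of $\tilde{\exc}^{(\sigma)}$ is mutually absolutely continuous with respect to the standard Brownian excursion of length $\sigma$, whose support in $C^0$ is the entire cone of positive excursions; consequently any $C^0$-open neighborhood of $f_0$ has positive probability. Conditionally on $f$, Lemma~\ref{lem:explicit} gives a strictly positive joint density for $(N,s_1,\ldots,s_N)$ on the open simplex of strictly increasing tuples, and conditional on $f$ and $(s_i)$ each $y_i$ has positive density with respect to the length measure on the growing marked subtree. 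Integrating over a sufficiently small open template neighborhood yields $\pr[\mathcal{C}_\sigma\equiv(G_1,\ldots,G_k)]>0$.

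\textbf{Part 3: Complex components across excursions.} Assume every $G_i$ is complex. Conditional on $(\sigma_j)_{j\geq 1}$, the $(\mathcal{D}_j)_{j\geq 1}$ are independent, so I consider
\[
E=\bigcap_{i=1}^{k}\{\mathcal{D}_i\equiv G_i\}\cap\bigcap_{j>k}\{\mathcal{D}_j\text{ has no complex component}\}.
\]
I first restrict to the positive-probability event that $(\sigma_1,\ldots,\sigma_k)$ lies in an open box chosen so that (a) Part 1 applies to each factor and (b) the candidate orderings of lengths are consistent (the construction of the template in Part 1 can achieve any prescribed total length for each $G_i$ within the chosen range for $\sigma_i$). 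Independence yields $\pr[E\mid(\sigma_j)]=\prod_{i\leq k}\pr[\mathcal{D}_i\equiv G_i\mid\sigma_i]\cdot\prod_{j>k}\pr[\mathcal{D}_j\text{ has no complex component}\mid\sigma_j]$. The first product is strictly positive by Part 1, while for the tail Lemma~\ref{lem:smallcontinuous} gives $\pr[\mathcal{D}_j\text{ has a complex component}\mid\sigma_j]\leq C\sigma_j^3$, so the tail product is bounded below by $\prod_{j>k}(1-C\sigma_j^3)>0$ on the a.s.\ event $\sum_j\sigma_j^3<\infty$ supplied by Proposition~\ref{prop:excursionlengths}. Taking expectations yields $\pr[\mathcal{C}_{\mathrm{cplx}}\equiv(G_1,\ldots,G_k)]>0$.

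\textbf{Part 4: Full support of edge lengths.} Given the combinatorial structure, each edge length of the limiting components is a continuous functional of the excursion data and of $(s_i,y_i)$. For any target $(x_1,\ldots,x_K)$ in the stated constraint set, I design the template of Part 1 to have exactly these edge lengths---this is possible because the template construction has enough degrees of freedom: the inter-spine distances and the positions of the $y_i^0$ within their ancestral paths can be tuned continuously, and each component's total length $\sum_{j:e_j\in E(G_i)}x_j$ can be realized because the height of the template subtree encoding $G_i$ is a free parameter (subject only to the decreasing ordering constraint). Openness and absolute continuity then give positive probability to an arbitrarily small neighborhood of the target edge-length vector.

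\textbf{Main obstacle.} The delicate step is verifying that perturbations of the template data preserve the MDM isomorphism class exactly: no spurious coincidence of branch points, no migration of a $y_i$ onto the wrong edge of the growing marked subtree, no reordering of $(s_i)$, and no change in the ancestral/non-ancestral status of any identification. This forces a careful, explicit construction of $f_0$ with all branch heights distinct and all $y_i^0$ in the interiors of their respective edges, together with quantitative openness estimates tied to the minimum gaps in the template.
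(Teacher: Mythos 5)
Your proposal is essentially correct in outline and shares the paper's architecture (a discrete witness tree with backward identifications, positivity of the corresponding marked-tree event, then forcing no complex components elsewhere via the $O(\sigma^3)$ bound of Lemma~\ref{lem:smallcontinuous} and $\sum_i\sigma_i^3<\infty$ from Proposition~\ref{prop:excursionlengths}), but it takes a genuinely different route at the positivity step. The paper never perturbs an explicit template excursion: it expresses $\pr[\T_{\sigma}^{\mathrm{mk}}\equiv T]$ as an explicit integral over the open set $D_T(f)$ via Lemma~\ref{lem:explicit}, proves positivity in Lemma~\ref{lem:chargetree} by comparison with the known sampling result for the undirected scaling limit (Lemma 10 of \cite{A-BBG10}), and then treats the heads exactly: they are uniform on the growing marked subtree, so they land on the prescribed edges in the prescribed order with explicitly positive probability, and the resulting edge splits are Dirichlet$(1,\ldots,1)$, which have full support — this delivers the length statement with no stability analysis. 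You instead use the $C^0$-support of the Brownian excursion plus mutual absolute continuity of $\tilde{\exc}^{(\sigma)}$ and a perturbation argument; this is legitimate and more self-contained, but it shifts the real work onto the two steps you defer: the combinatorial representation of $(G_1,\ldots,G_k)$ by a plane tree with \emph{backward} identifications compatible with the marking scheme (this is exactly Lemma~\ref{lem:existstree}, whose proof is a nontrivial algorithmic construction, not a citation-free ``fact''), and the quantitative openness estimate, which must also exclude extra marks (here the joint density of $(N,s_1,\ldots,s_N)$ is what saves you). For $\mathcal{C}_{\mathrm{cplx}}$ you spread $G_i$ over $\mathcal{D}_1,\ldots,\mathcal{D}_k$, which obliges you to manage the length ordering across excursions and to assert, without proof, that $(\sigma_1,\ldots,\sigma_k)$ charges a suitable open box; the paper avoids both issues by realizing the whole sequence inside the single excursion $\mathcal{D}_1$ — and in fact your own conditional computation makes the box restriction unnecessary, since given $(\sigma_j)_{j\ge 1}$ the conditional probability of your event is a.s.\ positive, so you could simply drop it.
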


\noindent\textbf{Constructing $3$-regular directed multigraphs from trees and back edges.} First, we want to show that any of the graphs in which we are interested can be constructed by a procedure which adds back edges to a plane tree.  We set this up in a discrete framework. Let $\mathsf{t}$ be a discrete plane tree whose vertices have outdegrees in $\{0,1,2\}.$  We think of this as a directed graph, with edges pointing away from the root. We assume that $\mathsf{t}$ has as many leaves as internal vertices of outdegree one, which we call $x_1,\ldots,x_n$ and $y_1,\ldots,y_n$ respectively, in the planar order.  We assume, moreover, that for each $i \ge 1$, the internal vertex $y_i$ is visited before the leaf $x_i$ in the depth-first exploration.  By identifying $x_i$ and $y_i$ for all $i,$ we obtain a directed graph, whose strongly connected components we then extract.  Each strongly connected component will have exactly one vertex of degree $2$, which we erase, merging its two incident edges. The result is a set of $3$-regular strongly connected directed multigraphs. The next lemma asserts that any appropriate collection of such multigraphs can be obtained by this procedure, and Figure~\ref{fig6} provides an example.

\begin{figure}[h]
\includegraphics[scale=1]{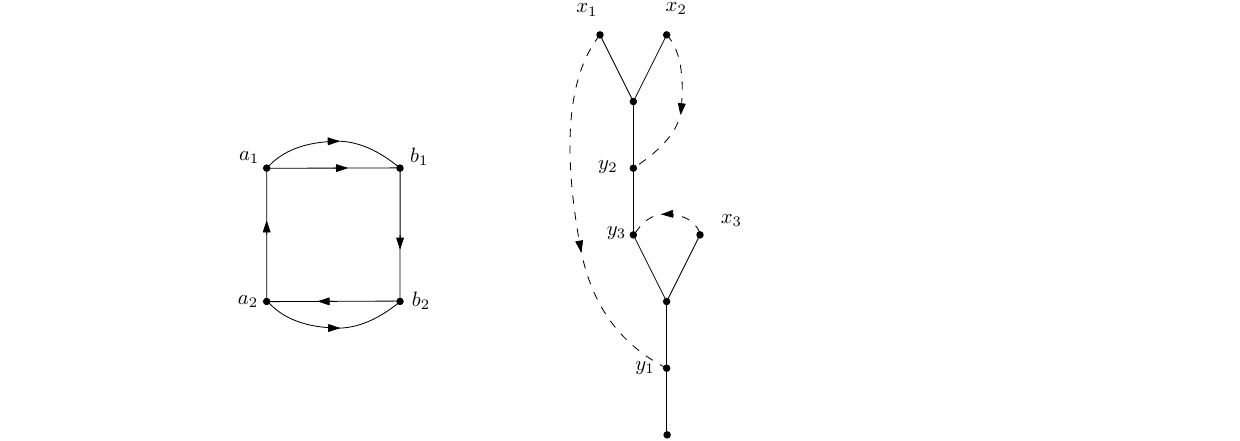}
\caption{Obtaining a $3$-regular connected directed multigraph from a tree with backward identifications. The tree was built using the method presented in the proof of Lemma~\ref{lem:existstree}.}
\label{fig6}
\end{figure}

\begin{lem}\label{lem:existstree} For any $(G_1,\ldots,G_k),$ there exist a discrete plane tree $\mathsf{t}$ and pairings $(x_i,y_i)$ such that the above construction results in $(G_1,\ldots,G_k).$
\end{lem}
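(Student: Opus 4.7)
The proof is constructive: for each $G_i$ I would build a plane sub-tree $\mathsf{t}_i$ with as many leaves as out-degree-1 internal vertices, then concatenate the sub-trees into a single tree $\mathsf{t}$. Because each $\mathsf{t}_i$ is self-balanced, the global planar pairing of leaves with out-degree-1 internal vertices keeps all identifications internal to a single $\mathsf{t}_i$, so the components emerge as $G_1,\ldots,G_k$ in the prescribed order after identification and the subsequent degree-2-merging/erasure step.

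For a loop $G_i$ (i.e.\ a directed cycle of length $\ell\ge 1$ treated as a loop of length $\ell$), $\mathsf{t}_i$ is simply a linear chain of $\ell$ out-degree-1 internal vertices terminated by a single leaf, paired to the top of the chain; after identification this is a directed cycle on $\ell$ vertices, which is recorded as a loop of length $\ell$ once the degree-2 vertices are collapsed.

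For a $3$-regular strongly connected directed multigraph $G$ on $n$ vertices, I would choose a starting vertex $v_0\in V(G)$ and a spanning out-arborescence $A$ of $G$ rooted at $v_0$ (which exists because $G$ is strongly connected). The sub-tree $\mathsf{t}_i$ then consists of a new root $\rho$ with single child $v_0$, the arborescence $A$ as internal structure, and, for each of the $n/2+1$ edges of $G$ not in $A$ (say $u\to w$), a new leaf attached as an extra child of $u$. The out-degree of each vertex of $G$ in $\mathsf{t}_i$ then equals its out-degree in $G$, so $\mathsf{t}_i$ has $n/2$ out-degree-2 internal vertices, $n/2+1$ out-degree-1 internal vertices ($\rho$ together with the out-degree-1 vertices of $G$), and $n/2+1$ leaves. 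What remains to specify is the planar embedding, i.e.\ the sibling order at each out-degree-2 vertex.

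The crux of the proof is to choose this embedding so that in the planar order the $i$-th leaf $x_i$, coming from a non-tree edge $u\to w$, is paired with an out-degree-1 internal $y_i$ representing $w$, with the convention that $\rho$ represents $v_0$. My strategy is to produce $A$ through a DFS carried out in a specific order: at each out-degree-2 vertex during the exploration, process the outgoing non-tree edges before the outgoing tree edges (so that their leaves appear as first children), and among several non-tree edges at a single vertex, order them according to the DFS-time of their targets, oldest first. I would then argue by induction on the number of leaves emitted so far that the invariant ``the sequence of still-unpaired out-degree-1 internals matches, in order, the targets of still-unemitted leaves'' is maintained. The main obstacle is precisely this invariant-preservation step: small examples show that a naive DFS can mispair leaves, and the argument must carefully exploit both the freedom in choosing $v_0$ and $A$ and the binary sibling choice at each out-degree-2 vertex to keep the matching consistent throughout the exploration.
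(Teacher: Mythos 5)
Your overall plan (one subtree per $G_i$, a root vertex standing in for $v_0$, leaves for the non-arborescence edges) is close in spirit to the paper's, and some pieces are sound: for \emph{any} spanning out-arborescence, every in-degree-$1$ vertex other than the root must receive its unique incoming edge as an arborescence edge, so the heads of non-tree edges are automatically out-degree-$1$ vertices of $G$ or $v_0$, and your vertex/leaf counts are right. But the proof has a genuine gap exactly where you flag it: everything hinges on exhibiting a choice of $v_0$, arborescence and planar embedding for which every leaf is paired with a vertex that is visited \emph{earlier} in the depth-first order, and you do not prove this -- you only state that the argument ``must carefully exploit'' the available freedom. Worse, the concrete heuristic you do propose (put the leaves from non-tree edges \emph{before} the tree children) fails already on the smallest example: take $G$ with vertices $a,b$ and edges $a\to b$, $a\to b$, $b\to a$, root the arborescence at $v_0=a$ using one copy of $a\to b$. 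The leaf at $a$ coming from the second copy of $a\to b$ must be identified with $b$, so $b$ has to precede that leaf in the exploration, i.e.\ the \emph{tree} child $b$ must come first at $a$ -- the opposite of your rule. A second, related issue is that you are trying to enforce a stronger condition than necessary: you want the $i$-th leaf in planar order to be matched to the $i$-th unpaired out-degree-$1$ vertex (your ``self-balanced'' invariant), whereas all that the construction and its use in Proposition \ref{prop:fullsupport} require is that each leaf's partner be visited before it; insisting on the order-respecting matching makes the combinatorial problem harder without helping.

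The paper avoids this difficulty by not fixing the arborescence in advance. It builds $\mathsf{t}$ and the spanning structure \emph{simultaneously} by exploring $G$: the currently active vertex $z$ (the leftmost deepest vertex with open edges) advances in planar order, a processed edge whose head is new extends the tree, and a leaf is created precisely when the head is already present in $\mathsf{t}$ -- so the partner of every leaf automatically precedes it, and there is no invariant to maintain. Rooting the exploration at an in-degree-$1$ vertex $a_1$ whose incoming edge $(b_1,a_1)$ exists, together with the auxiliary degree-$2$ vertex $\rho_0$ absorbing the edge into $a_1$, is what guarantees that every leaf's partner is an out-degree-$1$ vertex (or $\rho_0$). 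If you want to salvage your fixed-arborescence approach, you would at least need to (i) replace your sibling rule by one that places a leaf after the tree child whenever its target lies in that child's subtree, and (ii) prove that the remaining cross-subtree targets can always be ordered consistently -- neither of which is done, and the second is essentially the whole content of the lemma.
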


\begin{proof}
Notice first that we can focus on the case where $k=1$. Once this case is treated, the general case can be solved by taking a tree $\mathsf{t}$ which contains distinct subtrees corresponding to each $G_i.$

So let $G$ be a fixed strongly connected 3-regular directed multigraph. Noticing that it cannot have vertices with outdegree $0$ or $3$, and that the sum of the outdegrees of all the vertices is equal to that of all the indegrees, we deduce that there exists $n\in\N$ such that $G$ has $n$ vertices with indegree $1$ and outdegree $2$, and $n$ vertices with indegree $2$ and outdegree $1$.  Let $a_1,\ldots,a_n$ be the former and $b_1,\ldots,b_n$ the latter, for any ordering such that the edge $(b_1,a_1)$ exists.

We will give a method to construct the necessary plane tree as well as the backward links between leaves and edges. At each step, $\mathsf{t}$ will contain a certain number of vertices of $G,$ as well as some ``open" edges, which have their tails at points in $\mathsf{t}$ but are missing their heads.

Start with $\mathsf{t}$ initially containing three vertices: a root with outdegree 1, its child (which we arbitrarily call $\rho_0$) which has outdegree $1$ as well, and its next neighbour $a_1,$ from which originate two open edges. At each step of the algorithm, let $z$ be the leftmost of the deepest vertices of $\mathsf{t}$ which have open edges, choose any edge of $G$ starting at $z$ which is not yet featured in $\mathsf{t}$, call $u$ the head of that edge, and do the following:
\begin{itemize}
\item If $u$ is not already in $\mathsf{t},$ add it at the end of the leftmost open edge, and add one or two open edges at $u$ corresponding to its outdegree in $G$. The edge $(z,u)$ is then a tree edge in $\mathsf{t}.$
\item If $u$ is already in $\mathsf{t}$ but $u\neq a_1,$ add a leaf at the end of the leftmost open edge, label that leaf $x_j$ for the smallest available $j$ and let also $u=y_j.$ The edge $(z,u)$ is then featured in $\mathsf{t}$ as the tree edge $(z,x_j),$ identifying $x_j$ with $u.$
\item If $u=a_1,$ put a leaf at the end of the leftmost open edge, label that leaf $x_j$ for the smallest available $j$, and let $y_j=\rho_0.$ The edge $(z,u)$ is then featured in $\mathsf{t}$ as the merging of the tree edges $(z,x_j)$ and $(\rho_0,a_1),$ identifying $x_j$ with $\rho_0.$
\end{itemize}
Note that this algorithm terminates, and that identifying the pairs $(x_i,y_i)$ in $\mathsf{t}$ and removing the root (which is not in its strongly connected component) and $\rho_0$ (which has degree $2$ in the strongly connected component) gives us $G.$

Moreover, by construction, the successive vertices appearing as $z$ follow the planar ordering of $\mathsf{t}$. This means that at any step, any other vertex of $\mathsf{t}$ can be found earlier than $z$ in the contour process, and thus in every pair $(x_i,y_i),$ the vertex $y_i$ is seen earlier than $x_i$ in the exploration process, and the identifications indeed go backwards. This completes the proof.
\end{proof}

\noindent\textbf{The marked tree has full support.} If $T$ is a discrete plane tree and $\T$ is a discrete plane tree with edge lengths (equivalently an $\R$-tree with finitely many leaves which are ordered), we write $\T\equiv T$ if the discrete plane structure underlying $\T$ is $T$. If $\T\equiv T$ then the lengths of the edges of $\T$, in planar order, form a vector in $\R_+^k$ where $k$ is the number of edges of $T$.

Let $T$ be a fixed binary rooted discrete plane tree with $n\in\N$ leaves. For an excursion function $f: [0,\sigma]\to \R_+,$ we let $D_T(f)$ be the set of increasing sequences $\mathbf{t}=(t_1,\ldots,t_n)\in[0,\sigma]^n$ such that the $\T_f(t_1,\ldots,t_n)\equiv T.$ This is an open subset of $[0,\sigma]^n$ which can be written explicitly as 
\begin{align*}
D_T(f)= & \left\{\mathbf{t}\in[0,\sigma]^n: \; t_1<t_2\ldots <t_n\text{ and } \forall k\in\{3,\ldots,n\},\, \right. \\
& \left. \qquad \qquad \qquad \hat{f}(t_{i(k)},t_{k-1})< \hat{f}(t_{k-1},t_k) < \hat{f}(t_{j(k)},t_{k-1}) \right\}. 
\end{align*}
Here the indices $i(k)$ and $j(k)$ are defined as follows, and illustrated by Figure~\ref{fig7}. Let $L_1,\ldots,L_n$ be the leaves of $T$ in planar order (we add $L_{0}=\rho$ for the sake of convenience). For $k\in \{3,\ldots,n\}$, we then take $i(k)<j(k)$ to be any two integers in $\{0,1,2,\ldots,k-1\}$ such that, on the path $\llbracket \rho, L_{k-1}\rrbracket,$ the two points $L_{i(k)} \wedge L_{k-1}$ and $L_{j(k)} \wedge L_{k-1}$ are respectively maximal and minimal such that $L_{i(k)} \wedge L_{k-1}\leq L_{k-1} \wedge L_k \leq L_{j(k)}\wedge L_{k-1}$ for the genealogical/planar order. 

\begin{figure}[h]
\centering
\includegraphics[scale=1]{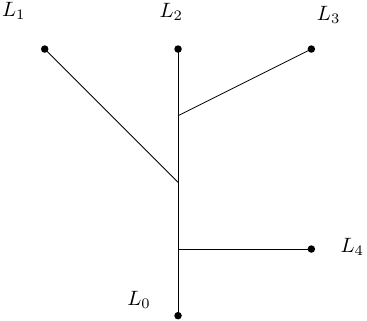}
\caption{For this tree, $i(3)=1$, $j(3)=2$, $i(4)=0$, and $j(4)=1.$ Given an excursion function $f,$ a sequence $t_1<t_2<t_3<t_4$ will then be in $D_T(f)$ iff $\hat{f}(t_1,t_2)<\hat{f}(t_2,t_3)<f(t_2)$ and $0<\hat{f}(t_3,t_4)<\hat{f}(t_1,t_4).$}
\label{fig7}
\end{figure}

\begin{lem}\label{lem:chargetree} We have 
\begin{align*}
\pr[\T_f^{\mathrm{mk}}=T]= \int_{\mathbf{t}\in D_T(f)}&\mathrm{d}\mathbf{t} \prod_{k=1}^n \left(\sum_{i=1}^k f(t_i) - \hat{f} (t_{i-1},t_i)\right) \\ 
    &\exp{\left(-\int_0^{\sigma} \Bigg(f(t)-\hat{f}(t_{I(t)},t) + \sum_{i=1}^{I(t)}f(t_i)-\hat{f}(t_{i-1},t_i)\Bigg)\mathrm{d}t\right)},
\end{align*}
where $t_0=0$ and, for $t\in[0,\sigma]$, $I(t)=\max\{i: t_i<t\}.$

Moreover, if we take $f=2\tilde{\exc}^{(\sigma)}$ for $\sigma>0$, then 
\[\pr[\T_{\sigma}^{\mathrm{mk}}\equiv T]>0, \]
and conditionally on $\T_{\sigma}^{\mathrm{mk}}\equiv T$, the joint distribution of the edge lengths of $\T_{\sigma}^{\mathrm{mk}}$ has full support in $\R_+^{2n-1}.$
\end{lem}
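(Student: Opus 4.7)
The three assertions reduce, respectively, to a direct integration, a positivity statement, and a support statement, all resting on the explicit density formula of Lemma~\ref{lem:explicit}. For the first formula, I would observe that the event $\{\T_f^{\mathrm{mk}}\equiv T\}$ coincides exactly with $\{N=n\}\cap\{(s_1,\ldots,s_n)\in D_T(f)\}$: the chain of inequalities on $\hat{f}$ defining $D_T(f)$ is precisely the condition that the successive projections $x_i=p_f(s_i)$ attach to the previously built skeleton in the way prescribed by the plane shape of $T$ (the indices $i(k),j(k)$ are read off from this attachment pattern). Integrating the joint density of $(N,s_1,\ldots,s_N)$ from Lemma~\ref{lem:explicit} with $N=n$ over $D_T(f)$ yields the stated expression.

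For the positivity $\pr[\T_\sigma^{\mathrm{mk}}\equiv T]>0$, I would note that the integrand in the formula is strictly positive on $D_T(f)$ (the exponential factor is positive, and each factor $\sum_{i=1}^k(f(t_i)-\hat{f}(t_{i-1},t_i))$ is strictly positive there), so it suffices to show that $\pr[D_T(2\tilde{\exc}^{(\sigma)})\neq\emptyset]>0$. The plan is to construct explicitly a piecewise-linear excursion function $f_0:[0,\sigma]\to\R_+$ together with $\mathbf{t}_0\in D_T(f_0)$ that realize $T$: take $f_0$ to be a rescaling of the contour function of $T$ endowed with arbitrary positive edge lengths, and let the entries of $\mathbf{t}_0$ be the times at which this contour visits the leaves of $T$ in planar order. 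Because $f\mapsto\hat{f}$ is continuous in the uniform norm, $D_T(f)$ depends lower-semicontinuously on $f$, so a sup-norm neighborhood of $f_0$ is contained in $\{f:D_T(f)\neq\emptyset\}$. The standard Brownian excursion of length $\sigma$ has full topological support in the space of excursion functions of length $\sigma$ under the uniform topology (a classical fact, via the support theorem for Brownian motion and the Vervaat transform), and since the tilt factor $\exp(\sigma^{3/2}\int_0^1\exc(x)\mathrm{d}x)$ is strictly positive, the same holds for $\tilde{\exc}^{(\sigma)}$.

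For the full-support claim on edge lengths, each of the $2n-1$ edge lengths of $\T_f^{\mathrm{mk}}$ is, under the canonical isomorphism with $T$, a specific continuous function of $(f,\mathbf{t})$ of the form $f(t_i)-\hat{f}(t_j,t_k)$ or $\hat{f}(t_j,t_k)-\hat{f}(t_{j'},t_{k'})$. For any target vector $L=(L_1,\ldots,L_{2n-1})\in\R_+^{2n-1}$, the contour construction above can be tuned so that the edge lengths produced by $(f_0,\mathbf{t}_0)$ are exactly $L$. Combining (i) continuity of these length functions in $(f,\mathbf{t})$, (ii) strict positivity of the density from Lemma~\ref{lem:explicit} on $D_T(f)$ (so that, conditional on $f$, the law of $\mathbf{s}$ gives positive mass to every open subset of $D_T(f)$), and (iii) the positive probability that $\|2\tilde{\exc}^{(\sigma)}-f_0\|_\infty$ is arbitrarily small, yields that for every $\eta>0$ the joint event $\{\T_\sigma^{\mathrm{mk}}\equiv T\}\cap\{\text{edge lengths within } \eta \text{ of } L\}$ has positive probability, which is the desired full-support statement.

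The step I expect to require the most care is the Brownian-excursion support fact: while classical in spirit, a careful writeup will need either a precise bibliographic citation or a short self-contained argument, since standard support theorems are usually phrased for unconditioned processes rather than for excursions of prescribed length. The remainder of the proof comes down to continuity of the explicit algebraic expressions appearing in the formula, together with standard approximation arguments.
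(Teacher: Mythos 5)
Your proof of the first identity is the same as the paper's (a direct application of Lemma~\ref{lem:explicit} over the event $\{N=n,\ \mathbf{s}\in D_T(f)\}$), but for the positivity and full-support statements you take a genuinely different route. The paper does not invoke any support theorem for the excursion: it instead compares the density in the first identity with the known sampling density for the undirected graph limit (Lemma 10 of \cite{A-BBG10}, where leaves are sampled with intensity $\tilde{\exc}^{(\sigma)}$), noting that the product $\prod_{k}\bigl(\sum_{i\le k}(f(t_i)-\hat f(t_{i-1},t_i))\bigr)$ dominates the corresponding product there and that the exponential factor is bounded below by $\exp(-\sigma(n+1)\sup\tilde{\exc}^{(\sigma)})$, which is a.s.\ positive; positivity of the shape-and-lengths event is then inherited from the undirected result. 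Your argument instead establishes positivity from scratch: you exhibit a piecewise-linear excursion $f_0$ (a time-reparametrised contour of $T$ with prescribed edge lengths, so that lengths come out exactly at the target $L$), use the openness of $D_T(\cdot)$ and the $1$-Lipschitz dependence of $\hat f$ on $f$ in sup norm to see that a sup-norm ball around $f_0$ forces $D_T(f)\ni\mathbf{t}_0$ with lengths within $\eta$ of $L$, and then use strict positivity of the density on the open set $D_T(f)$ together with the full topological support of the (tilted) Brownian excursion. This is correct, and arguably more self-contained conceptually and more flexible (it does not depend on the A-BBG sampling lemma), but it shifts the burden onto the excursion support theorem, which, as you note yourself, is usually stated for unconditioned processes and needs either a precise reference or a short argument (e.g.\ via the Vervaat transform and the support theorem for the Brownian bridge), plus the routine remark that the exponential tilting preserves null sets. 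The paper's comparison argument buys positivity with only an elementary pointwise domination of densities and a citation already used elsewhere in the paper, at the cost of being less transparent about \emph{why} every shape and length vector is charged.
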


\begin{proof}
The first statement comes from Lemma~\ref{lem:explicit}. For the second statement, we use a comparison with the scaling limit of the undirected random graph. Specifically, Lemma 10 of \cite{A-BBG10} gives the joint distribution of the tree shape and the edge lengths in the subtree of $\mathcal{T}_{\sigma}$ spanned by the root and a random collection of leaves obtained as the projection of a Poisson point process on $[0,\sigma]$ with intensity $\tilde{\exc}^{(\sigma)}(\cdot)$ onto the tree.\footnote{The sampled leaves in the undirected graph setting come from a Poisson point process with intensity $\tilde{\exc}^{(\sigma)}(\cdot)$ rather than the intensity $2\tilde{\exc}^{(\sigma)}(\cdot)$ we have in our construction for the directed graph.  This is because (as seen in~\cite{A-BBG12}) in the setting of the undirected graph the identifications arise as the limit of the surplus edges: the number of potential surplus edges originating at a single vertex is given not by the height of the vertex but rather by the number of vertices sitting on the stack in the depth-first exploration (the so-called \emph{depth-first walk}).  The depth-first walk is asymptotically half the size of the height, and so has scaling limit $\tilde{\exc}^{(\sigma)}$ rather than $2\tilde{\exc}^{(\sigma)}$.} In particular, the probability that this procedure gives the tree shape $T$ and that the lengths of the edges (in planar order) lie in an open set $A\subset \R_+^{2n-1}$ is positive, that is
\[\E\left[\int_{\mathbf{t}\in D_T(2\tilde{\exc}^{(\sigma)})}\mathrm{d}\mathbf{t} \mathbf{1}_{\{(2\tilde{\exc}^{(\sigma)}(\mathbf{t}))\in A'\}}\prod_{k=1}^n  \tilde{\exc}^{(\sigma)}(t_k) \exp{\left(-\int_0^{\sigma} \tilde{\exc}^{(\sigma)}(t)\mathrm{d}t\right)}\right]>0,\]
where $A'\in \R_+^n$ is the open set such that the heights the leaves of $T$ are in $A'$ iff its edge lengths are in $A$.
This implies that $\E[G]>0$ where 
\[
G=\int_{\mathbf{t}\in D_T(2\tilde{\exc}^{(\sigma)})}\mathrm d \mathbf t\mathbf{1}_{\{(2\tilde{\exc}^{(\sigma)}(\mathbf{t}))\in A'\}}\prod_{k=1}^n \left(\sum_{i=1}^k 2 \tilde{\exc}^{(\sigma)}(t_i) \right),
\]
(since $G$ is larger than the random variable in the expectation above).  We then have
\begin{align*}
\pr[\T_{\sigma}^{\mathrm{mk}}\equiv T,\text{lengths in }A] &\geq \E\left[G\exp{(-\sigma(n+1)\sup \tilde{\exc}^{(\sigma)})}\right],
\end{align*}
and this is positive since $\sup \tilde{\exc}^{(\sigma)}$ is a.s.\ finite.
\end{proof}

\begin{proof}[Proof of Proposition~\ref{prop:fullsupport}] 
We first show the result for $\mathcal{C}_{\sigma}$. Let $\mathsf{t}$ and $((x_i,y_i),i\in\{1,\ldots,n\})$ be the discrete tree and pairing of leaves and outdegree-$1$ vertices given by Lemma~\ref{lem:existstree}. Moreover, let $T$ be obtained from $\mathsf{t}$ by erasing the vertices of degree $2$, and merging their adjacent edges. Let
\begin{itemize}
\item $(e_1,\ldots,e_K)$ be the edges of $(G_1,\ldots,G_k),$ in any order;
\item $(e_1,\ldots,e_K,\e_{K+1},\ldots,e_N)$ be those of $\mathsf{t},$ in any order completing the previous one;
\item $(f_1,\ldots,f_M)$ be those of $T,$ in planar order.\footnote{Note that in fact we have $M=2n-1$, $N=3n-1$ and $K=3(n-k)+k',$ where $k'$ is the number of unicycles amongst $(G_1,\ldots,G_k)$; however, this fact is not useful here.}
\end{itemize}

By construction, each edge of $(G_1,\ldots,G_k),$ is an edge of $\mathsf{t}$, justifying the notation for the edges of $\mathsf{t}.$ Moreover, each edge of $T$ is obtained by merging edges of $\mathsf{t}$, so there exists a partition of $\{1,\ldots,N\}$ with blocks $(S(i), 1 \le i \le M)$ such that for each $i\in\{1,\ldots,M\}$, $f_i$ is obtained by merging $e_j$ for $j\in S(i).$ For $i\in\{1,\ldots,n\}$, let $e_T(y_i)$ be the edge of $T$ containing $y_i$.
Given this information, we call a collection of positive lengths $\ell(e_i)$, and $\ell(f_i)$ 
such that $\ell(f_i)=\sum_{j\in S(i)} \ell(e_j)$ an \emph{admissible length assignment}.

Recall that, from the construction given in Section~\ref{sec:continuousbackedges}, conditionally on $\T_{\sigma}^{\mathrm{mk}}$ with leaves $L_1,\ldots,L_p,$ the marked internal points $z_1,\ldots,z_p$ are independent and, for each $j$, $z_j$ is uniform on $\cup_{k=1}^j \llbracket \rho,L_k\rrbracket.$ If $\T_{\sigma}^{\mathrm{mk}}\equiv T$ then this gives rise to a length assignment $\ell$ on $T,$ and we have
\[\pr\left[z_j \in e_T(y_j),\, \forall j\in\{1,\ldots,n\}\mid \T_{\sigma}^{\mathrm{mk}}\equiv T\right]\geq 
\prod_{j=1}^n\frac{\ell(g(y_j))}{\mathrm{len}(\T_{\sigma}^{\mathrm{mk}})}.\]
Moreover, conditionally on the event $\{z_j \in e_T(y_j),\, \forall j\in\{1,\ldots,n\}, \T_{\sigma}^{\mathrm{mk}}\equiv T\}$, for any edge $f_i$ of $T$, the probability that  $z_j$, for $j$ such that $y_j\in f_i$, are in the right order on $f_i$ is $\frac{1}{|S(i)|!}.$ If this occurs, then it gives rise to a length assignment $\ell$ on $\mathsf{t}$ as well, making the whole length assignment admissible. We then have  $(\ell(e_j),j\in S(i))=(D_1(i)\ell(f_i),\ldots,D_{|S(i)|}(i)\ell(f_i))$ where $D(i)=(D_1(i),\ldots,D_{|S(i)|}(i))\in \Delta_{|S(i)|}$ has the Dirichlet$(1,\ldots,1)$ distribution on the $(|S(i)|-1)$-dimensional simplex $\Delta_{|S(i)|}$. These events occur independently for different $i\in\{1,\ldots,M\}.$

Let $A$ be an open set in $\R_+^K.$ Take open sets $B\subset \R_+^M$ and $C_i\in \Delta_{|S(i)|}$ for $i\in \{1,\ldots,M\}$ such that, for any admissible length assignment, if $(\ell(f_i),i\in\{1,\ldots,M\})\in B$ and, for all $i$, $\left(\frac{\ell(e_j)}{\ell(f_i)},j\in S(i)\right)\in C_i$, then we have $\ell(e_i),i\in\{1,\ldots,K\})\in A.$
Then
\begin{align*}
\pr\Big[C_{\sigma}\equiv (G_1,\ldots,G_k), \ & (\ell(e_i),i\in\{1,\ldots,K\})\in A\Big] \\
           &\geq\E\left[\mathbf{1}_{\{\T_{\sigma}^{\mathrm{mk}}\equiv T,(\ell(f_i),i\in\{1,\ldots,M\})\in B\}}\prod_{j=1}^n\frac{\ell(f(y_j))}{\mathrm{len}(\T_{\sigma}^{\mathrm{mk}})}\prod_{i=1}^M \, \frac{1}{|S(i)|!}\mathbf{1}_{\{D(i)\in C_i\}}\right].
\end{align*}

By Lemma~\ref{lem:chargetree}, the event $\{\T_{\sigma}^{\mathrm{mk}}\equiv T,(\ell(f_i),i\in\{1,\ldots,M\})\in B\}$ occurs with positive probability, and since Dirichlet distributions charge the full simplex, we do indeed have that
\[\pr\Big[C_{\sigma}\equiv (G_1,\ldots,G_k),(\ell(e_i),i\in\{1,\ldots,K\})\in A\Big]>0.\]

We finally turn to the result for $\mathcal{C}_{\mathrm{cplx}}$. Recall that $(\sigma_i,i\geq 1)$ are the ranked excursion lengths of a Brownian motion with parabolic drift and that, conditionally on the lengths, $\mathcal{C}_i, i\geq 1$ are independent copies of $\mathcal{C}_{\sigma_i}.$ Notice that
\begin{align*}
& \pr[\mathcal{C}_{\mathrm{cplx}}\equiv(G_1,\ldots,G_k), \text{ lengths in }A] \\ 
& \qquad \geq \pr[\mathcal{C}_{1}\equiv(G_1,\ldots,G_k),\text{lengths in }A ,\, \mathcal{C}_i \text{ has no complex components } \forall i\geq 2].
\end{align*} From Propositions~\ref{prop:excursionlengths} and~\ref{prop:Poissonbounds}, we deduce that $(\mathcal{C}_i, i\geq 2)$ has no complex components with positive probability. An application of the first part of the proposition then completes the proof.
\end{proof}

\appendix

\section{Gromov--Hausdorff distances}
In this section, we give relevant background on the Gromov--Hausdorff distance and its variants that we use in this paper. For additional details and proofs, we refer to Chapter 7 of \cite{Burago}, Section 6 of \cite{MiermontTessellations}, and the references therein.

\subsection{Definitions}\label{sec:appendix1}
Consider two compact metric spaces $X$ and $X'$. The \emph{Gromov--Hausdorff distance} $d_{\mathrm{GH}}(X,X')$ between them is defined to be
\[
d_{\mathrm{GH}}(X,X')=\underset{\phi,\phi'}\inf d_{\mathrm{H},\mathcal{Z}} (\phi(X),\phi'(X)),
\]
where the infimum is taken over all possible isometric embeddings $\phi$ and $\phi'$ into a common metric space $\mathcal{Z},$ and $d_{\mathrm{H},\mathcal{Z}}$ denotes the Hausdorff distance between compact subsets of $\mathcal{Z}.$

We use two variants of the Gromov--Hausdorff distance which include \emph{marked points} and \emph{probability measures} on $X$ and $X'$, respectively. First, consider $k\in\N$ and points $x_i\in X$ and $x'_i\in X'$ for $i\in [k]$. We define the \emph{$k$-pointed Gromov--Hausdorff distance} between $(X,(x_i,i\in[k]))$ and $(X',(x'_i,i\in[k]))$ to be
\begin{equation} \label{eqn:kpointedGH}
d_{\mathrm{GH}}^{k}\left(\left(X,(x_i,i\in[k])\right),\left(X',(x'_i,i\in[k])\right)\right)=\underset{\phi,\phi'}\inf \left (d_{\mathrm{H},\mathcal{Z}} (\phi(X),\phi'(X))\vee \underset{i\in [k]}\max \,d_{\mathcal{Z}} (\phi(x_i),\phi'(x_i))\right),
\end{equation}
where $\phi,$ $\phi',$ and $\mathcal{Z}$ are as before and $d_{\mathcal{Z}}$ is the metric on $\mathcal{Z}.$  We will also need a version of the pointed Gromov--Hausdorff distance which allows for a random (but finite) number of marked points.  For compact metric spaces $X$ and $X'$ let $S \subset X$ and $S' \subset X'$ be such that $|S| < \infty$ and $|S'| < \infty$.  Then define
\begin{equation} \label{eqn:pointedGH}
d_{\mathrm{GH}}^{*}((X, S), (X',S')) = \begin{cases}
d_{\mathrm{GH}}^{k}((X,S), (X', S')) & \text{if $|S| = |S'| = k \ge 0$} \\
\infty & \text{otherwise.}
\end{cases}
\end{equation}
Next, let $\nu$ and $\nu'$ be Borel probability measures on $X$ and $X'$ respectively. The \emph{Gromov--Hausdorff--Prokhorov distance} between $(X,\nu)$ and $(X',\nu')$ is defined to be
\[
d_{\mathrm{GHP}}((X,\nu),(X',\nu'))=\underset{\phi,\phi'}\inf \left (d_{\mathrm{H},\mathcal{Z}} (\phi(X),\phi'(X))\vee d_{\mathrm{P},\mathcal{Z}} (\phi^{-1}(\nu),(\phi')^{-1}(\nu')\right),
\]
where $\phi,$ $\phi',$ and $\mathcal{Z}$ are as before and $d_{\mathrm{P},\mathcal{Z}}$ is the Prokhorov metric between probability measures on $\mathcal{Z}.$

Note that these definitions are flexible: we can add more probability measures, combine marks and measures, and so on. Write $d^{k,l}_{\mathrm{GHP}}$ for the $k$-pointed and $l$-measured Gromov--Hausdorff--Prokhorov distance.  We will make particular use of the distance $d^*_{\mathrm{GHP}}$ defined as follows: for $S \subset X$, $S' \subset X'$, natural numbers $L$ and $L'$, Borel probability measures $(\nu_i, i \in [L])$ on $X$ and $(\nu_i', i \in [L'])$ Borel probability measures on $X'$, let
\begin{align*}
& d^*_{\mathrm{GHP}}((X, S, (\nu_i, i \in [L])), (X', S', (\nu'_i, i \in [L'])))\\
&  = \begin{cases}
d^{k,l}_{\mathrm{GHP}}((X,S, (\nu_i, i \in [L])), (X', S', (\nu'_i, i \in [L']))) & \text{if $|S| = |S'| = k$ and $L = L' = l$,} \\
\infty & \text{otherwise}.
\end{cases}
\end{align*}
These distances all make their respective sets of isometry classes into Polish spaces. The following lemma is a variant of Proposition 10 in \cite{MiermontTessellations} and is particularly useful for us.
\begin{lem}\label{lem:GHPtopointed} Let $((X^n,(x^n_i,i\in[k]),(\nu^n_i,i\in[l])), n\in\N)$ be a sequence of random $k$-pointed and $l$-measured compact metric spaces which converges in distribution to $(X,(x_i,i\in[k]),(\nu_i,i\in[l]))$. For all $n\in\N$, conditionally on $(X^n,(x^n_i,i\in[k]),(\nu^n_i,i\in[l]))$, let $(y^n_i,i\in[l])$ be independent random variables taking values in $X^n$ with respective distributions $(\nu^n_i,i\in[l])$. Then the sequence of $(k+l)$-pointed metric spaces, $(X^n,(x^n_i,i\in[k]),(y^n_i,i\in[l]), n\in\N)$, converges in distribution as $n \to \infty$ to $(X,(x_i,i\in[k]),(y_i,i\in[l])),$ where the $(y_i,i\in[l])$ are defined analogously to the $(y^n_i, i \in [l])$.
\end{lem}

\subsection{Correspondences and their use}\label{sec:appendix2}
A \emph{correspondence} between $X$ and $X'$ is a relation $\mathcal{R}$ such that, for any $x\in X,$ there exists at least one $x'\in X'$ such that $x \mathcal{R} x'$ and, for any $x'\in X$, there exists at least one $x\in X'$ such that $x \mathcal{R} x'.$ Correspondences are a very convenient tool with which to study Gromov--Hausdorff distances, by quantifying them through their \emph{distortions}. The distortion of the correspondence $\mathcal{R}$ is defined by
\[\mathrm{dis}\,\mathcal{R}= \sup \left\{\left|d(x,y)-d(x',y')\right|: x\mathcal{R}x',y\mathcal{R}y'\right\}.\]
It is then classical that
\[
d_{\mathrm{GH}}(X,X')=\frac{1}{2}\underset{\mathcal{R}}\inf\mathrm{dis}\,\mathcal{R},
\]
where the infimum is taken over all correspondences $\mathcal{R}$ between $X$ and $X'$. 

The following useful lemma showcases the use of a correspondence to bound a pointed Gromov--Hausdorff distance.
\begin{lem}\label{lem:pointedGHexcursion}
Let $f$ and $g$ be two excursion functions on $[0,\sigma]$, and let $s_1,\ldots,s_k,s'_1,\ldots,s'_k$ be points in $[0,\sigma]$. Then
\[
d^k_{\mathrm{GH}}\left(\left(\T_f,p_f(s_1),\ldots,p_f(s_k)\right),\left(\T_f,p_g(s'_1),\ldots,p_g(s'_k)\right)\right) \leq 2 \|f-g\|+\omega_{\delta}(f),
\]
where $\delta=\sup_{1 \le i \le k} |s_i-s'_i|$ and $\omega_{\delta}(f)$ is the $\delta$-modulus of continuity of $f$.
\end{lem}
\begin{proof}
The relation $\mathcal{R}=\{p_f(s),p_g(s)\}\subset \T_f\times\T_g$ is well-known to be a correspondence, with $\mathrm{dis}(\mathcal{R})\leq 4 \|f-g\|.$ As in the proof of Theorem 7.3.25 in \cite{Burago}, we can then build a metric $d$ on the disjoint union $\T_f \cup \T_g$ which extends their intrinsic metrics by letting, for $s$ and $s'$ in $[0,\sigma],$
\[d(p_f(s),p_f(s'))=\underset{t\in[0,\sigma]}\inf\left\{d_f(s,t)+d_g(t,s')+\frac{1}{2}\mathrm{dis}(\mathcal{R})\right\}.\]
It is then straightforward to verify that, under this embedding, $d_{\mathrm{H}}(\T_f,\T_g)\le 2 \|f-g\|$ and \\$d(p_f(s_i),p_g(s'_i))\le 2\|f-g\|+\omega_{\delta}(f).$
\end{proof}

We end this section with a formulation of the multiply pointed and measured Gromov--Hausdorff--Prokhorov distance in terms of correspondences. Let $X$ and $X'$ be compact metric spaces with marked points $(x_i,i\in[k])$ and $(x'_i,i\in[k])$ as well as Borel probability measures $(\nu_i,i\in[l])$ and $(\nu'_i,i\in[l]).$ We let $R$ be the set of all correspondences $\mathcal{R}$ between $X$ and $X'$ such that $x_i \mathcal{R} x'_i$ for $i\in[k]$. For $i\in[l]$, let $\mathsf{C}(\nu_i,\nu_i')$ be the set of \emph{couplings} of $\nu_i$ and $\nu'_i$, namely Borel probability measures on $X\times X'$ which have $\nu_i$ and 
$\nu'_i$ as marginals on $X$ and $X'$ respectively. We then have
\begin{align*}d^k_{\mathrm{GHP}}&\left(\left(X,(x_i,i\in[k]),(\nu_i,i\in[l])\right),\left(X',(x'_i,i\in[k]),(\nu'_i,i\in[l])\right)\right)= \\ &\inf \left\{\rho>0: \exists\,  \mathcal{R} \in R(X,Y),  \mu_i \in \mathsf{C}(\nu_i,\nu_i'), i \in [l] \text{ such that } \underset{i\in[l]}\inf\mu_i(\mathcal{R})\ge 1-\rho, \mathrm{dis}\,\mathcal{R}\leq \rho \right\}. \numberthis \label{eq:GHPdistortion}
\end{align*}
\section*{Acknowledgements}

This research was supported by EPSRC Fellowship EP/N004833/1.  We would like to thank Nicolas Broutin and Julien Berestycki for helpful discussions.  We are very grateful to \'Eric Brunet for the proof of Proposition~\ref{prop:excursionlengths} $(ii)$. We would like to thank the referees for their careful reading of the paper and insightful comments, which led to many improvements.

\bibliographystyle{abbrv}
\bibliography{bib}

\end{document}